\numberwithin{equation}{section}
\numberwithin{subsection}{section}
\newenvironment{enumerate1}
{\begin{enumerate}[\upshape (1)]}
	{\end{enumerate}}
\newtheorem*{namedtheorem}{\theoremname}
\newcommand{\theoremname}{testing}
\newtheorem*{theorem-no-num}{Theorem}
\newtheorem{theorem}{Theorem}[section]
\newtheorem{proposition}[theorem]{Proposition}
\newtheorem{proposition-definition}[theorem]
{Proposition-Definition}
\newtheorem{corollary}[theorem]{Corollary}
\newtheorem{lemma}[theorem]{Lemma}
\theoremstyle{definition}
\newtheorem{definition}[theorem]{Definition}
\newtheorem{remark}[theorem]{Remark}
\theoremstyle{remark}
\renewcommand{\mathcal}{\mathscr}
 \newcommand\cB{\mathcal{B}}
\newcommand\cE{\mathcal{E}} \newcommand\cF{\mathcal{F}}
\newcommand\cI{\mathcal{I}} 
 \newcommand\cL{\mathcal{L}}
\newcommand\cM{\mathcal{M}} 
\newcommand\cO{\mathcal{O}} 
\newcommand\cQ{\mathcal{Q}} 
 \newcommand\cT{\mathcal{T}}
 \newcommand\cV{\mathcal{V}}
\newcommand\cY{\mathcal{Y}} 
\renewcommand\AA{\mathbb{A}}
\newcommand\GG{\mathbb{G}}
 \newcommand\PP{\mathbb{P}}
\newcommand\QQ{\mathbb{Q}}
 \newcommand\ZZ{\mathbb{Z}}
 \newcommand\bP{\mathbf{P}}
\newcommand\rmm{\mathrm{m}}
 \newcommand\bfd{\mathbf{d}}
 \newcommand\bfj{\mathbf{j}}
\newcommand\bfk{\mathbf{k}}
\newcommand*\bfell{\ensuremath{\boldsymbol\ell}}
\newcommand\arr{\ifinner\to\else\longrightarrow\fi}
\newcommand\arrto{\ifinner\mapsto\else\longmapsto\fi}
\newcommand{\hooklongrightarrow}{\lhook\joinrel\longrightarrow}
\newcommand{\eqdef}{\mathrel{\smash{\overset{\mathrm{\scriptscriptstyle def}} =}}}
\def\displaytimes_#1{\mathrel{\mathop{\times}\limits_{#1}}}
\def\displayotimes_#1{\mathrel{\mathop{\bigotimes}\limits_{#1}}}
\renewcommand\hom{\operatorname{Hom}}
\newcommand\Pic{\operatorname{Pic}}
\newcommand\spec{\operatorname{Spec}}
\newcommand\id{\mathrm{id}}
\newlength{\ignora}
\newcommand{\gm}{\GG_{\rmm}}
\newcommand{\GL}{\mathrm{GL}}
\newcommand{\SL}{\mathrm{SL}}
\newcommand{\PGL}{\mathrm{PGL}}
\newcommand{\Gr}{\mathrm{Gr}}
\DeclareFontFamily{U}{mathx}{\hyphenchar\font45}
\DeclareFontShape{U}{mathx}{m}{n}{
	<5> <6> <7> <8> <9> <10>
	<10.95> <12> <14.4> <17.28> <20.74> <24.88>
	mathx10
}{}
\DeclareSymbolFont{mathx}{U}{mathx}{m}{n}
\DeclareMathAccent{\widecheck}{0}{mathx}{"71}
\DeclareMathAccent{\wideparen}{0}{mathx}{"75}
\renewcommand{\epsilon}{\varepsilon}
\newcommand{\Hilb}{\underline{\rm Hilb}}
\newcommand{\ch}[1][*]{\operatorname{CH}^{#1}}
\newcommand{\pr}{{\rm pr}}
\newcommand{\hilb}{\underline{\rm Hilb}}
\newcommand{\mpgl}[2]{\cM_{#1}^{\PGL}(#2)}
\newcommand{\mgl}[2]{\cM_{#1}^{\GL}(#2)}
\newcommand{\msl}[2]{\cM_{#1}^{\SL}(#2)}
\begin{document}
\title{Intersection theory on moduli of smooth complete intersections}
\author[A. Di Lorenzo]{Andrea Di Lorenzo}
	\address[A. Di Lorenzo]{Humboldt Universit\"{a}t zu Berlin, Germany}
	\email{andrea.dilorenzo@hu-berlin.de}
\maketitle
\begin{abstract}
    We provide a general method for computing rational Chow rings of moduli of smooth complete intersections. We specialize this result in different ways: to compute the integral Picard group of the associated stack ; to obtain an explicit presentation of rational Chow rings of moduli of smooth complete intersections of codimension two; to prove old and new results on moduli of smooth curves of genus $\leq 5$ and polarized K3 surfaces of degree $\leq 8$.
\end{abstract}
\section*{Introduction}
The investigation of rational Chow rings of moduli spaces, whose first instances can be traced back to the the work of Schubert on Grassmannians, is a domain that has been quite active in the last years. 

Among the most relevant results in this area, we have the determination of the Chow ring of $\overline{M}_3$, the moduli space of stable curves of genus three, by Faber (\cite{Fab-m3}), and the computation by several different authors of the Chow ring of $M_g$, the moduli space of smooth curves of genus $g$, for $2\leq g \leq 9$ (\cites{Fab-m4, Iza, PeV, CL}).

Let $0<r<n$ and let $\bfd=(d_1,\dots,d_r)$ be an increasing sequence of positive integers: in this paper, we study rational Chow rings of the moduli stacks $\cM^{\PGL}_n(\bfd)$ of smooth complete intersections of $r$ hypersurfaces of degree $d_1,\dots,d_r$ in $\PP^n$ (see \Cref{def:mpgl} for a rigorous definition). 

Our interest in the Chow ring of these moduli stacks stems from the fact that they can be used to gather information on the Chow ring of other moduli spaces, e.g. moduli of curves of low genus or moduli of polarized K3 surfaces of low degree (see \Cref{rmk:examples} for more on this). Moreover, stacks of complete intersections have already been the subject of some study, e.g. in the series of work by Benoist (\cites{benoist-thesis, Ben-deg, Ben-sep}) or in \cite{AI} when $\bfd=(2,2)$.

\subsection*{Main result}
The main technical result of this paper is the following Theorem, which concerns a stack denoted $\cM^{\GL}_n(\bfd)$ and from which all the statements on $\cM^{\PGL}_n(\bfd)$ are deduced. 

We are aware that at first sight this Theorem might not strike the reader as very explicit; for this reason, the remainder of the Introduction will be dedicated to explain its applications.
\begin{theorem-no-num}
We have
\[ \ch(\cM^{\GL}_n(\bfd))\simeq \QQ[c_1,c_2,\dots,c_{n+1},\gamma_1,\dots,\gamma_r]^{\mathfrak{S}_{\bfd}}/R.   \]
The ideal of relations $R$ is generated by cycles of the form
\[ \sum_{0\leq a_1,\dots,a_r \leq s} \gamma_1^{a_1}\cdots\gamma_r^{a_r}\cdot \pi_*\left( C_s(a_1,\dots,a_r)P(\beta_1,b_1,\dots,b_{s-1})\right) \]
where the coefficients in front of $\gamma_1^{a_1}\cdots\gamma_r^{a_r}$ are obtained via $\GL_{n+1}$-equivariant integration on a flag variety
of some specific cycles $C_s(a_1,\dots,a_r)P(\beta_1,b_1,\dots,b_{s-1})$.
Moreover, in degree $1$ the presentation above holds with $\ZZ$-coefficients.
\end{theorem-no-num}
The generators appearing above are certain symmetric functions in $\gamma_1,\dots,\gamma_r$, and the proof of this result is based on a vast generalization of a method introduced in \cite{FVis}.
A presentation for the Chow ring of $\cM^{\PGL}_n(\bfd)$ can then be obtained by simply adding the relation $c_1=0$. First quick applications of the Theorem are:
\begin{enumerate1}
\item the computation of \emph{the rational Chow ring of $M_5$} (\Cref{prop:chow M5}), already determined by Izadi: this computation is based on the fact that the stack of smooth, non-trigonal curves of genus five is isomorphic to $\cM^{\PGL}_4(2,2,2)$.
\item the computation of \emph{the rational Chow ring of an open subset of $K_8$, the moduli space of polarized K3 surfaces of degree eight} (\Cref{prop:chow U8}). This turns out to be trivial, hence all the non trivial cycles on $K_8$ of codimension $>0$ come from certain Noether-Lefschetz divisors.
\end{enumerate1}
The results above are obtained by applying localization formulas, implemented with Mathematica. Let us remark that once fixed $n$ and $\bfd$ the rational Chow ring of $\cM_n^{\PGL}(\bfd)$ can be explicitly worked out applying the same method.
\subsection*{Integral Picard groups and Benoist's formula}
Our Theorem can also be used to compute integral Picard groups. For instance, we prove the following.
\begin{theorem-no-num}
Suppose that the base field has characteristic $\neq 2$ or that $n$ is odd. Then:
\[ \Pic(\cM^{\PGL}_n(d,\dots,d)) \simeq \ZZ/N\ZZ,\quad N=\frac{\binom{n+1}{r}rd^{r}(d-1)^{n-r+1}}{{\rm mcm}(n+1,rd)}. \]
\end{theorem-no-num}
More generally, in \Cref{thm:pic pgl} we are able to determine the integral Picard group of $\cM^{\PGL}_n(\bfd)$ for every $\bfd=(d_1,\dots,d_r)$. Observe that the formula above, specialized to the case $\bfd=(2,2)$, recovers the main result of \cite{AI}.

A second application consists in the following: consider a product of projective spaces of the form $\PP H^0(\PP^n,\cO(d_1))\times\cdots\times \PP H^0(\PP^n,\cO(d_r))$; inside this variety there is a divisor whose points correspond to tuples of homogeneous forms $([f_1],\dots, [f_r])$ such that the projective scheme defined by the equations $f_1=f_2=\cdots=f_r=0$ is singular.

The multidegree of this divisor has been computed in \cite{Ben-deg} by Benoist using some toric geometry and results of Gelfand-Kapranov-Zelevinsky.
It turns out that the computation of this multidegree is equivalent to the computation of the integral Picard group of $\cM^{\SL}_n(\bfd)$. We do this in \Cref{thm:pic}, thus providing a different proof of Benoist's formula.

\begin{theorem-no-num}[\cite{Ben-deg}]
Suppose that the base field $k$ has characteristic $\neq 2$ or that $n$ is odd. Let
\[ d_1=\dots=d_{r_1}<d_{r_1+1}=\dots=d_{r_1+r_2}<\dots<d_{r_1+\cdots+r_{\ell-1}+1}=\dots=d_{r_1+\cdots+r_{\ell}}, \]
be positive integers and set $e_i=d_i-1$. Define $a_{j,1}=\gamma_{r_1+\cdots+r_{j-1}+1}+\cdots+\gamma_{r_1+\cdots+r_j}$. Then
\[ \Pic(\cM^{\SL}_n(\bfd))\simeq\langle a_{1,1},\dots,a_{\ell,1}\rangle/\langle F \rangle \]
where 
\[F= \sum_{i=1}^{r} \left( d_1d_2\cdots \widehat{d_i}\cdots d_r\sum_{j=1}^{r}\frac{1}{\prod_{j'\neq j} e_j-e_{j'}}\left(\frac{e_i^{n+1}-e_j^{n+1}}{e_i-e_j}\right)\right)\gamma_i \]
\end{theorem-no-num}
Our proof is based on Schubert calculus on a flag variety, combined with an interesting polynomial identity coming from the localization formula.
\subsection*{Complete intersections of codimension two}
From the main Theorem we are also able to derive a simple presentation of the Chow ring of $\cM^{\PGL}_n(d_1,d_2)$, the moduli stack of smooth complete intersections of codimension two. 
\begin{theorem-no-num}
Let $n\geq 3$ and $d_1>d_2\geq 2$ be integers such that the quantity (\ref{eq:det imp}) for $e_i=d_i-1$ is not zero. Then
\[\ch(\cM^{\PGL}_n(\bfd))\simeq \mathbb{Q}[\gamma_1]/(\gamma_1^2), \]
where $\gamma_1$ is a cycle of degree one.

If instead $d_1=d_2$ and the quantity (\ref{eq:det simple}) for $e=d_1-1$ is not zero, we have
\[\ch(\cM^{\PGL}_n(\bfd))\simeq \mathbb{Q}. \]
\end{theorem-no-num}
We give two direct applications of these results:
\begin{enumerate1}
\item in \Cref{cor:chow M4} we compute \emph{the rational Chow ring of $M_4$}, the moduli space of smooth curves of genus four; this ring has already been computed by Faber in \cite{Fab-m4}.
\item in \Cref{cor:chow U6} we compute \emph{the rational Chow ring of an open subset of $K_6$}, the moduli space of polarized K3 surfaces of degree six. The points in this subset correspond to polarized K3 surfaces whose polarization is very ample.
\end{enumerate1}

\subsection*{Outline of the paper}
In \Cref{sec:moduli} we define the stack $\cM^{\PGL}_n(\bfd)$ of complete intersections (\Cref{def:mpgl}) and we give a presentation of this stack as a quotient (\Cref{prop:presentation}). In the remainder of the Section we discuss the geometry of this stack.

In \Cref{sec:chow} we prove our main Theorem (\Cref{thm:chow}) and we specialize it to two interesting cases, namely to moduli of smooth curves of genus five and to moduli of polarized K3 surfaces of degree eight.

In \Cref{sec:pic} we compute the integral Picard group of $\cM^{\SL}_n(\bfd)$ (\Cref{thm:pic}) and $\cM^{\PGL}_n(\bfd)$ (\Cref{thm:pic pgl}).

In \Cref{sec:cod 2} we focus on smooth complete intersections of codimension two and we give a totally explicit presentation of the Chow ring of $\cM^{\PGL}_n(\bfd)$ in this case (\Cref{thm:chow cod 2 first} and \Cref{thm:chow cod 2 second}). We then apply these results to moduli of smooth curves of genus four and moduli of polarized K3 surfaces of degree six.

In \Cref{sec:quot} we gather a couple of useful results on quotient vector bundles and Grassmannians.

\subsection*{Notation and conventions}
All the schemes are schemes over a base field $k$. In most of the paper, we don't need any further assumption on the base field $k$. The only assumptions are the one stated for \Cref{thm:pic} and \Cref{thm:pic pgl}.

In the paper, the symbol $n\geq 2$ will always stand for the dimension of the projective space $\PP^n$. The integer $0<r<n$ will be the codimension of the complete intersections, and the degrees $d_1\leq \cdots \leq d_r$ will always be assumed to be $\geq 2$. The integers $r_1,\dots, r_\ell$ will be the ones such that $d_1=\dots=d_{r_1}<d_{r_1+1}=\dots=d_{r_1+r_2}<\dots<d_{r_1+\cdots+r_{\ell-1}+1}=\dots=d_{r_1+\cdots+r_{\ell}}.$ We will use $d'_i$ to indicate $d_{r_1+r_2+\cdots +r_i}$ and $e_i$ for the quantity $d_i-1$.

Every Chow ring is considered with $\QQ$-coefficients, unless otherwise stated.

\subsection*{Acknowledgments} 
We benefited from several conversations on this and related topics with Angelo Vistoli. For this, we thank him warmly. Part of this material is based upon work supported by the Swedish Research Council under grant no. 2016-06596 while the author was in residence at Institut Mittag-Leffler in Djursholm, Sweden during the fall of 2021.

\section{Moduli of complete intersections}\label{sec:moduli}
The main goal of this Section is to give a presentation of the stack $\cM_n^{\PGL}(\bfd)$ of (polarized) smooth complete intersections as a quotient stack (\Cref{prop:presentation}), presentation that will be used in the next Sections to perform intersection-theoretical computation.

We begin by recalling how $\cM^{\PGL}_n(\bfd)$ is defined (\Cref{def:mpgl}) and we list some examples of $\cM^{\PGL}_n(\bfd)$ for specific values of $\bfd$ that are of particular interest (see \Cref{rmk:examples}).

\Cref{prop:presentation} is proved by showing that a certain Hilbert scheme is isomorphic to a tower of Grassmannian bundles, that we define in \ref{sub:hilb}. The remainder of the Section is devoted to connect the equivariant Chow ring of this tower of Grassmannian bundles to the equivariant Chow ring of a much simpler object (\Cref{lemma:diagram}).
\subsection{The stack of complete intersections}
Let $k$ be a field. Fix two integers $n$ and $r$ with $0 < r < n$, and a sequence of positive integers $\bfd = (d_{1}, \dots, d_{r})$ with $d_i\leq d_{i+1}$ and $d_{i} \geq 2$ for all $i$. If $K$ is an extension of $k$, a closed subscheme $X \subseteq \PP^{n}_{K}$ is a complete intersection of type $\bfd$ if it has codimension $r$, and is the scheme theoretic intersection of $r$ hypersurfaces of degrees~$d_{1}$, \dots,~$d_{r}$. If $K'$ is an extension of $K$ and $X\subseteq \PP^{n}_{K'}$ is a closed subscheme, then $X_{K'} \subseteq \PP^{n}_{K'}$ a complete intersection of type $\bfd$ if and only if $X \subseteq \PP^{n}_{K}$ is \cite[Proposition~2.1.11]{benoist-thesis}.

We denote by $\Hilb_{\bfd,n}^{\rm sm}$ the subfunctor of the Hilbert scheme $\hilb_{\PP^{n,k}/k}$ such that if $S$ is a $k$-scheme, $\Hilb_{\bfd,n}^{\rm sm}(S) \subseteq \hilb_{\PP^n/k}(S)$ consists of closed subschemes $X \subseteq \PP^{n}_{S}$ that are finitely presented and flat over $S$, whose fibers are smooth complete intersections of type $\bfd$. This is a smooth open subscheme of $\hilb_{\PP^{n}/k}$ \cite[\S2.2.3]{benoist-thesis}.

There is a natural action of $\PGL_{n+1}$ over $\Hilb_{\bfd,n}^{\rm sm}$, coming from the action of $\PGL_{n+1}$ on $\PP^{n}$; if $R$ is a $k$-algebra, $A \in \PGL_{n+1}(R)$, and $X \subseteq \PP^{n}_{S}$ is in $\Hilb_{\bfd,n}^{\rm sm}(S)$, we define $A \cdot X$ to be the inverse image of $X$ under $A^{-1}\colon \PP^{n}_{S} \arr \PP^{n}_{S}$. 
\begin{definition}\label{def:mpgl}
   We set $\mpgl{n}{\bfd}:=[\Hilb_{\bfd,n}^{\rm sm}/ \PGL_{n+1}] $.
\end{definition}
   
Another way of interpreting $\mpgl{n}{\bfd}$ is as follows. If $P \arr S$ is a Brauer--Severi scheme of relative dimension $n$, a closed subscheme $X \subseteq P$ is a complete intersection of type $\bfd$ if for $S' \arr S$ a fully faithful finitely presented morphism, and $\PP^{n}_{S'} \simeq S'\times_{S}P$ an isomorphism of $S'$-schemes, the inverse image of $X$ in $\PP^{n}_{S'}$ is in $\Hilb_{\bfd,n}^{\rm sm}(S')$. If $T \arr S$ is a morphism and $X \subseteq P$ is a local complete intersection of type $\bfd$, the inverse image of $X$ in $T \times_{S} P$ is also a local complete intersection of type $\bfd$.

An object $\mpgl{n}{\bfd}(S)$, where $S$ is a $k$-scheme, is a pair $(P \arr S, X)$, where $P \arr S$ is a Brauer--Severi scheme of relative dimension $n$, and $X \subseteq P$ is a smooth complete intersection of type $\bfd$. The morphisms in $\mpgl{n}{\bfd}$ are the obvious ones.

The stack $\mpgl{n}{\bfd}$ was introduced by Benoist in \cite{benoist-thesis}; he determines, in particular, when $\mpgl{n}{\bfd}$ is a separated Deligne--Mumford stack, and when it has a quasi-projective moduli space.

\begin{remark}\label{rmk:examples}
$\mpgl{n}{\bfd}$ can be thought of as a stack of polarized algebraic varieties. In many cases the polarization is uniquely determined, and in this case $\mpgl{n}{\bfd}$ is in fact a stack of algebraic varieties, which in several cases is of considerable geometric interest.
\begin{enumerate1}

\item $\mpgl{2}{4}$ is the open subset of $\cM_{3}$ consisting of non-hyperelliptic curves of genus~$3$.

\item If $d \geq 4$, then it is well known that every smooth plane curve of degree $d$ has a unique linear $g^{2}_{d}$, (see for example \cite[Exercise~18, p.~56]{ACGH}). This means that the natural forgetful map $\mpgl{2}{d} \arr \cM_{g}$, where $\cM_{g}$ is the stack of smooth curves of genus $g \eqdef (d-1)(d-2)/2$, in injective on geometric points. One can show that this map is in fact a locally closed embedding.

\item $\mpgl{3}{2,3}$ is the stack of smooth non-hyperelliptic curves of genus $4$, while $\mpgl{4}{2,2,2}$ is the stack of curves of genus $5$ that are neither hyperelliptic nor trigonal (see the discussion in \cite[\S3]{DL-pic-curves}).

\item $\mpgl{3}{4}$ is the stack of K3 surfaces with a very ample polarization of degree~$4$, $\mpgl{4}{2,3}$ is the stack of K3 surfaces with a very ample polarization of degree~$6$, and $\mpgl{5}{2,2,2}$ is the stack of K3 surfaces with a very ample polarization of degree~$8$, which do not contain a curve of arithmetic genus $1$ and degree~$3$: see \cite[\S3]{DL-k3}.

\item $\mpgl{n}{2,2}$ is the stack of smooth complete intersections of two quadrics, which has been studied by Asgarli and Inchiostro in \cite{AI}.

\item If $n - r \geq 3$, then if $X \subseteq \PP^{n}_{K}$ is a complete intersection of type $\bfd$, the Picard group of $X$ is generated by the class of $\cO_{X}(1)$. Furthermore, a simple deformation-theoretic arguments reveals that a small deformation such a complete intersection is still a complete intersection of the same type. Using this, and the fact that $\dim_{K}H^{0}\bigl(X, \cO_{X}(1)\bigr) = n+1$ and $\dim_{K}H^{1}\bigl(X, \cO_{X}(1)\bigr) = 0$, it is an exercise to show that $\mpgl{n}{\bfd}$ is equivalent to the stack whose objects over a $k$-scheme $S$ are smooth proper morphisms $X \arr S$, whose geometric fibers are complete intersections dimension $n - r$ and type $\bfd$.

\item If $n - r = 2$ and $d_{1} + \dots + d_{r} \neq n + 1$, one can similarly conclude that $\mpgl{n}{\bfd}$ is equivalent to the stack whose objects over a $k$-scheme $S$ are smooth proper morphisms $X \arr S$, whose geometric fibers are complete intersection surfaces of type $\bfd$. The point is if $X \subseteq \PP^{n}_{K}$ is a smooth $2$-dimensional complete intersection of type $\bfd$, then by adjunction $\omega_{X/K} \simeq \cO_{X}(d_{1} + \dots + d_{r} - n-1)$. Since the Picard group of $X$ is torsion-free, this determines $\cO_{X}(1)$ uniquely. 

On the other hand, if $d_{1} + \dots + d_{r} \neq n + 1$ then $X$ is Calabi--Yau, and this will almost certainly fail for any possible value of $\bfd$.

\end{enumerate1}
\end{remark}

\begin{definition}
   We set $\mgl{n}{\bfd} :=[\Hilb_{\bfd,n}^{\rm sm}/ \GL_{n+1}]$, where the action of $\GL_{n+1}$ on $\Hilb_{\bfd,n}^{\rm sm}$ is induced by the projection $\GL_{n+1} \arr \PGL_{n+1}$.
   
   Similarly, we define $\msl{n}{\bfd} :=[\Hilb_{\bfd,n}^{\rm sm}/ \SL_{n+1}]$.
\end{definition}

These stacks can also be described in a spirit similar to the one above: an object of $\mgl{n}{\bfd}(S)$ can be thought of as a pair $(E, X)$, where $E$ is a locally free sheaf on $S$ of rank $n+1$, and $X \subseteq \PP(E)$ is a smooth complete intersection of type $\bfd$.

The stack $\mgl{n}{\bfd}$, while is not as as geometrically natural as $\mpgl{n}{\bfd}$, is used in many calculations of Picard groups and Chow rings of stacks of a geometric origin (see for example \cite{DL-k3, DL-pic-curves, DLFV, AI}).

The objects of $\msl{n}{\bfd}$ are pairs $(E,X,\varphi)$ where $E$ is a locally free sheaf on $S$ of rank $n+1$, the $S$-scheme $X\subset\PP(E)$ is a smooth complete intersection of type $\bfd$ and $\varphi:\det(E)\overset{\simeq}{\arr}\cO_S$ is an isomorphism.

\subsection{Hilbert schemes of smooth complete intersections}\label{sub:hilb}
As before, pick $n\geq 2$ and $0<r<n$ and let $\bfd=(d_1,d_2,\dots,d_r)$ be an $r$-uple of positive integers satisfying $d_1\leq d_2\leq\cdots\leq d_r$. There exists positive integers $r_1,\dots,r_\ell$ such that 
\begin{align*}
    & d_1=\cdots=d_{r_1},\\
    & d_{r_1+1}=\cdots=d_{r_1+r_2},\\
    & \vdots \\
    & d_{r_1+r_2+\cdots + r_{\ell-1}+1} = \cdots = d_{r_1+\cdots +r_\ell}.
\end{align*}
Define moreover $d'_{i}:=d_{r_1+\cdots + r_i}$, so that $d'_{1}<d'_{2}<\cdots < d'_{\ell}$. Obviously, the datum $(\{d'_{i}\},\{r_i\})$ is equivalent to the datum of an $r$-uple $(d_1,\dots,d_r)$.

Let $\cE_1:=H^0(\PP^n,\cO(d'_{1}))$ and let $\pi_1:\Gr(r_1,\cE_1)\to\spec{k}$ be the Grassmannian of $r_1$-planes in $\cE_1$. Over $\Gr(r_1,\cE_1)$ we have a tautological vector bundle $\cT_1\subset\pi_1^*\cE_1$. There is a natural evaluation map of sheaves over $\Gr(r_1,\cE_1)\times\PP^n$ given by $\pr_1^*\pi_1^*\cE_1\to \pr_2^*\cO(d'_{1})$. The image of the composition
\[\pr_1^*\cT_1\otimes\pr_2^*\cO(-d'_{1})\longrightarrow \pr_1^*\pi_1^*\cE_1\otimes\pr_2^*\cO(-d'_{1}) \longrightarrow \cO_{\Gr(r_1,\cE_1)\times\PP^n} \]
is an ideal, whose associated subscheme in $\Gr(r_1,\cE_1)\times\PP^n$ we denote $Y_1$. The fibers of $\pr_1:Y_1\to\Gr(r_1,\cE_1)$ are subschemes in $\PP^n$ of codimension $r_1$ defined by the vanishing of $r_1$ homogeneous polynomials of degree $d'_{1}$.

On $\Gr(r_1,\cE_1)$ we can consider the locally free sheaf $\cE_2:=\pr_{1*}(\pr_2^*\cO(d'_{2}))$, which we can use to define the Grassmannian bundle $\pi_2:\Gr(r_2,\cE_2)\to\Gr(r_1,\cE_1)$. With a slight abuse of notation, let us denote the closed subscheme $(\pi_2\times\id)^{-1}(Y_1)\subset\Gr(r_2,\cE_2)\times\PP^n$ as $Y_1$. Observe that
\begin{align*}
    \pi_2^*\cE_2 &= \pi_2^*\pr_{1*}(\cO_{Y_1}\otimes\pr_2^*\cO(d'_{2})) \\
    &\simeq \pr_{1*}((\pi_2\times\id)^*(\cO_{Y_1}\otimes\pr_2^*\cO(d'_{2})))\simeq\pr_{1*}(\cO_{Y_1}\otimes\pr_2^*\cO(d'_{2})).
\end{align*} 
If $\cT_2$ is the tautological bundle on $\Gr(r_2,\cE_2)$, we can construct the map
\[ \cT_2\otimes\pr_2^*\cO(-d'_{2}) \to \pi_2^*\cE_2\otimes\pr_2^*\cO(-d'_{2}) \simeq  \pr_{1*}(\cO_{Y_1}\otimes\pr_2^*\cO(d'_{2}))\otimes\pr_2^*\cO(-d'_{2}) \to \cO_{Y_1}\]
whose image is an ideal sheaf, whose associated closed subscheme is $Y_2\subset Y_1\subset\Gr(r_2,\cE_2)\times\PP^n$.
Repeating this process for every $d'_{i}$, we end up with a tower of Grassmannian bundles
\begin{equation}\label{eq:tower}
    \Gr(r_{\ell},\cE_{\ell}) \overset{\pi_{\ell}}{\to} \cdots \overset{\pi_3}{\to}\Gr(r_{2},\cE_{2}) \overset{\pi_2}{\to}\Gr(r_{1},\cE_{1})\overset{\pi_1}{\to} \spec{k}
\end{equation}
and a chain of closed subschemes $Y_{\ell}\subset \cdots \subset Y_{2}\subset Y_1\subset \Gr(r_{\ell},\cE_{\ell})\times \PP^n $.

Denote $\cY_{\bfd,n}:=Y_{\ell}$. By construction, the fibers of $\cY_{\bfd,n}\to \Gr(r_{\ell},\cE_{\ell})$ are complete intersections of type $\bfd=(d_1,\dots,d_r)$. 

We define $S_{\bfd,n}$ in $\Gr(r_{\ell},\cE_{\ell})$ as the singular locus of the map $Y_{\ell}\to \Gr(r_{\ell},\cE_{\ell})$. This is well known to be a closed subscheme, and the restriction of $\cY_{\bfd,n}$ over the complement of $S_{\bfd,n}$ is a family of smooth complete intersections, hence it defines a map
\begin{equation}\label{eq:map to hilb}
    \Gr(r_{\ell},\cE_{\ell})\smallsetminus S_{\bfd,n} \longrightarrow \Hilb_{\bfd,n}^{\rm sm}
\end{equation}
to the Hilbert scheme of smooth complete intersections in $\PP^n$ of type $\bfd$. Observe that the natural action of $\PGL_{n+1}$ on $\PP^n$ defines an action of the same group on $\Gr(r_{\ell},\cE_{\ell})$. It is easy to check that (\ref{eq:map to hilb}) is equivariant with respect to the $\PGL_{n+1}$-action on the Hilbert scheme. The same statement holds for the induced actions of $\GL_{n+1}$ and $\SL_{n+1}$.
\begin{proposition}\label{prop:presentation}
Let $G$ be either $\GL_{n+1}$, $\SL_{n+1}$ or $\PGL_{n+1}$. Then we have an isomorphism of $G$-schemes $\Hilb_{\bfd,n}^{\rm sm}\simeq \Gr(r_{\ell},\cE_{\ell})\smallsetminus S_{\bfd,n}$, hence a presentation
\[ \cM^G_n(\bfd) \simeq [\Gr(r_{\ell},\cE_{\ell})\smallsetminus S_{\bfd,n}/G] \]
\end{proposition}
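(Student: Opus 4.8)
The plan is to show that the map (\ref{eq:map to hilb}) is an isomorphism of schemes, after which the presentation of $\cM^G_n(\bfd)$ as a quotient stack follows immediately by passing to quotient stacks, using that (\ref{eq:map to hilb}) was already noted to be $G$-equivariant. So the entire content is in constructing an inverse to (\ref{eq:map to hilb}). First I would recall what the target parametrizes: a point of $\Hilb_{\bfd,n}^{\rm sm}(S)$ is a smooth closed subscheme $X\subseteq\PP^n_S$, flat and finitely presented over $S$, whose fibers are smooth complete intersections of type $\bfd$. What I want is to recover, functorially in $S$, a point of $(\Gr(r_\ell,\cE_\ell)\smallsetminus S_{\bfd,n})(S)$, i.e. a compatible system of $r_i$-dimensional subspaces of the bundles $\cE_i$ pulled back to $S$.

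The key step is the following classical fact about complete intersections: if $X\subseteq\PP^n_S$ is flat over $S$ with fibers smooth complete intersections of type $\bfd$, then for each $i$ the pushforward $\pr_{1*}\cI_X(d'_i)$ (where $\cI_X$ is the ideal sheaf, twisted by $\cO(d'_i)$, and $\pr_1\colon\PP^n_S\to S$) is locally free of the expected rank, its formation commutes with base change on $S$, and $H^1$ of the twisted ideal sheaf vanishes fiberwise. This is where one uses that $\bfd$ is a complete intersection type and $d_1\le\cdots\le d_r$: the graded Betti numbers of a complete intersection are given by a Koszul complex, so the Hilbert function of the ideal is determined, and cohomology-and-base-change applies. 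Concretely, the space of degree-$d'_i$ forms vanishing on $X$ has a filtration: the ones already forced by the lower-degree equations (coming from $Y_{i-1}$, i.e. from the image of $\cT_{i-1}\otimes\cdots$), plus an $r_i$-dimensional "new" piece. Thus $\pr_{1*}\cI_X(d'_1)\subseteq\cE_{1,S}$ is a rank-$r_1$ subbundle, giving an $S$-point of $\Gr(r_1,\cE_1)$; inductively, having produced the point of $\Gr(r_{i-1},\cE_{i-1})$, the scheme $Y_{i-1}$ pulled back to $S$ contains $X$, the bundle $\cE_i$ pulled back along this point is canonically $\pr_{1*}(\cO_{Y_{i-1}}(d'_i))$ by the projection-formula computation already carried out in the text, and $\pr_{1*}\cI_X(d'_i)$ maps into it as a rank-$r_i$ subbundle, giving the point of $\Gr(r_i,\cE_i)$. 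At the end one gets an $S$-point of $\Gr(r_\ell,\cE_\ell)$; since $X$ is smooth over $S$, this point avoids $S_{\bfd,n}$.

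I would then check this construction is inverse to (\ref{eq:map to hilb}) in both directions. One composite: starting from a point of $\Gr(r_\ell,\cE_\ell)\smallsetminus S_{\bfd,n}$, the scheme $\cY_{\bfd,n}$ pulls back to the universal complete intersection over $S$, and applying the pushforward recipe recovers exactly the tautological subbundles $\cT_i$ — this is essentially the definition of $Y_i$ together with the fiberwise $H^1$-vanishing, which forces $\pr_{1*}\cI_{\cY}(d'_i)$ to have no more sections than the $\cT_i$ account for. The other composite: starting from $X$, the scheme cut out fiberwise by the recovered forms of degrees $d'_1,\dots,d'_\ell$ is a complete intersection of type $\bfd$ containing $X$, with the same Hilbert polynomial, hence equals $X$ (a closed immersion of flat families with equal Hilbert polynomials and the source already the right dimension fiberwise is an isomorphism).

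The main obstacle is the cohomology-and-base-change bookkeeping: one must verify, uniformly in the relative setting, that at each stage the "space of degree-$d'_i$ forms vanishing on $X$" is locally free of the correct rank with base-change-compatible formation, and that quotienting by the forms already coming from $Y_{i-1}$ leaves precisely a rank-$r_i$ bundle. This rests on the Koszul resolution of a complete intersection ideal and the resulting vanishing of the relevant higher cohomology on each geometric fiber; once fiberwise vanishing is in hand, Grauert's theorem and the theorem on cohomology and base change do the rest. Everything else — $G$-equivariance, descent to the quotient stack — is formal given the equivariance of (\ref{eq:map to hilb}) already observed in the text.
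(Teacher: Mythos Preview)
Your proposal is correct and follows essentially the same route as the paper: construct the inverse to (\ref{eq:map to hilb}) by pushing forward twists of the ideal sheaf to obtain, inductively, rank-$r_i$ subbundles of the pulled-back $\cE_i$. The paper's proof is terser --- it simply asserts that $\pr_{1*}(\cI\otimes\pr_2^*\cO(d'_i)|_{X_{i-1}})$ is locally free of rank $r_i$ without spelling out the Koszul/cohomology-and-base-change argument you outline, and it only explicitly checks one of the two composites --- but the underlying construction is the same.
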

\begin{proof}
We will construct an inverse to (\ref{eq:map to hilb}). Let $X\subset \PP^n_S \to S$ be a family of smooth complete intersections of type $\bfd$ and let $\cI$ be the ideal sheaf of $X$. We have an injective morphism of locally free sheaves
\begin{equation}\label{eq:inclusion 1}
    \pr_{1*}(\cI\otimes\pr_2^*\cO(d'_{1}))\hooklongrightarrow\pr_{1*}\pr_2^*\cO(d'_{1})\simeq H^0(\PP^n,\cO(d'_{1}))\otimes \cO_S.
\end{equation}
Observe that the sheaf on the left has rank $r_1$, hence it defines a map $S\overset{f_1}{\to} \Gr(r_1,\cE_1)$.

Let $X_1\subset \PP^n_S$ be the complete intersection of codimension $r_1$ defined by the homogeneous ideal associated to the image of (\ref{eq:inclusion 1}) and consider the inclusion
\[ \pr_{1*}(\cI\otimes\pr_2^*\cO(d'_{2})|_{X_1} )\hooklongrightarrow \pr_{1*}(\cO_{X_1}\otimes\pr_2^*\cO(d'_{2}) )\]
Observe that $X_1$ is the pullback of $Y_1\to\Gr(r_1,\cE_1)$ along $f_1:S\to\Gr(r_1,\cE_1)$, hence the sheaf above on the right is equal to $f_1^*\cE_2=f_1^*(\pr_{1*}(\cO_{Y_1}\otimes\pr_2^*\cO(d'_{2})))$. By hypothesis the locally free sheaf on the left has rank $r_2$, so we get a map $S\overset{f_2}{\to}\Gr(r_2,\cE_2)$.

Repeating this process, we eventually get a map $S\overset{f_{\ell}}{\to} \Gr(r_{\ell},\cE_{\ell})$ such that the pullback along this morphism of $Y_{\ell}\to\Gr(r_{\ell},\cE_{\ell})$ coincides with $X\to S$. In particular, the image of $f_{\ell}$ is contained in the complement of $S_{\bfd,n}$. Putting all together, we get a map
\[ \Hilb_{\bfd,n}^{\rm sm} \longrightarrow \Gr(r_{\ell},\cE_{\ell})\smallsetminus S_{\bfd,n} \]
which is an inverse to (\ref{eq:map to hilb}).
\end{proof}

\subsection{Another point of view}\label{subsubsec:another}
Let us give another and possibly more explicit construction of the tower of Grassmannians in (\ref{eq:tower}).
In what follows, we use the shorthand notation $H^0(\cO(d'_{i}))$ to denote $H^0(\PP^n,\cO(d'_{i}))$.
First define $\cE_{1,i}:=H^0(\cO(d'_{i}))$ and let $\pi_1:\Gr(r_1,\cE_{1,1})\to\spec{k}$ be the Grassmannian of $r_1$-planes in $H^0(\cO(d'_{1}))$.

Let $\cT_1\subset \pi_1^*\cE_{1,1}$ be the tautological bundle over $\Gr(r_1,\cE_{1,1})$. We have well defined multiplication maps
\begin{align*}
    &\cT_1\otimes H^0(\cO(d'_{2} - d'_{1})) \longrightarrow \pi_1^*\cE_{1,2} \longrightarrow \cE_{2,2} \\
    &\cT_1\otimes H^0(\cO(d'_{3} - d'_{1})) \longrightarrow \pi_1^*\cE_{1,3} \longrightarrow \cE_{2,3} \\
    &\vdots \\
    &\cT_1\otimes H^0(\cO(d'_{\ell} - d'_{1})) \longrightarrow \pi_1^*\cE_{1,\ell} \longrightarrow \cE_{2,\ell} 
\end{align*}
where the vector bundles $\cE_{2,i}$ on the right are by definition the cokernel of the multiplication map. Their fibers should be thought as vector spaces of forms of degree $d'_{i}$ up to multiples of certain forms of degree $d'_{1}$.

Let $\pi_2:\Gr(r_2,\cE_{2,2})\to\Gr(r_1,\cE_{1,1}) $ be the Grassmannian bundle of subbundles of rank $r_2$ in the vector bundle $\cE_{2,2}$, and let $\cT_2$ be the associated tautological bundle on $\Gr(r_2,\cE_{2,2})$. Then again we have well defined multiplication maps
\begin{align*}
    &\cT_2\otimes H^0(\cO(d'_{3} - d'_{2})) \longrightarrow \pi_2^*\cE_{2,3} \longrightarrow \cE_{3,3} \\
    &\cT_2\otimes H^0(\cO(d'_{4} - d'_{2})) \longrightarrow \pi_2^*\cE_{2,4} \longrightarrow \cE_{3,4} \\
    &\vdots \\
    &\cT_2\otimes H^0(\cO(d'_{\ell} - d'_{2})) \longrightarrow \pi_2^*\cE_{2,\ell} \longrightarrow \cE_{3,\ell}. 
\end{align*}
We can construct a Grassmannian bundle $\pi_3:\Gr(r_3,\cE_{3,3})\to\Gr(r_2,\cE_{2,2})$ and repeat the process. This eventually leads to a tower of Grassmannian bundles
\begin{equation}\label{eq:tower 2}
    \Gr(r_\ell,\cE_{\ell,\ell}) \xrightarrow{\pi_\ell} \cdots \xrightarrow{\pi_3} \Gr(r_2,\cE_{2,2}) \xrightarrow{\pi_2} \Gr(r_1,\cE_{1,1}) \xrightarrow{\pi_1} \spec{k}. 
\end{equation} 
Observe that the sheaves $\cE_{i,i}$ appearing in (\ref{eq:tower 2}) coincide with the sheaves $\cE_{i}$ that are in (\ref{eq:tower}) and the two towers of Grassmannian bundles are actually the same.

\subsection{A useful construction}
As before, pick $n\geq 2$ and let $\bfd=(d_1,d_2,\dots,d_r)$ be an $r$-uple of positive integers satisfying $d_1\leq d_2\leq\cdots\leq d_r$ and such that $0<r<n$. Then we define
\[ V(\bfd,n) := H^0(\PP^n,\cO(d_1))\times H^0(\PP^n,\cO(d_2))\times\cdots\times H^0(\PP^n,\cO(d_r)). \]
Observe that we can rewrite $V(\bfd, n)$ as
\[ H^0(\PP^n,\cO(d'_{1}))^{\times r_1} \times H^0(\PP^n,\cO(d'_{2}))^{\times r_2} \times \cdots \times H^0(\PP^n,\cO(d'_{\ell}))^{\times r_\ell} \]
so that $\GL_{r_i}$ acts by left multiplication on the $i^{\rm th}$-factor in the decomposition above. This defines an action of the group $\GL_{\bfd}:=\prod_{i=1}^{\ell} \GL_{r_i}$ on $V(\bfd, n)$.

Let $U(\bfd, n)\subset V(\bfd, n)$ be the open subscheme of polynomials $(f_1,\dots, f_r)$ such that the $f_1,\dots,f_{r_1}$ are linearly independent in $H^0(\PP^n,\cO(d'_{1}))$, the $f_{r_1+1},\dots,f_{r_1+f_2}$ are linearly independent in $H^0(\PP^n,\cO(d'_{2}))$, and so on. Then $\GL_{\bfd}$ acts freely on $U(\bfd, n)$, and we have
\[ [U(\bfd,n)/\GL_{\bfd}]\simeq \Gr(r_1,H^0(\cO(d'_{1})))\times\Gr(r_2,H^0(\cO(d'_{2})))\times\cdots\times\Gr(r_\ell,H^0(\cO(d'_{\ell}))). \]
We denote this last space as $\Gr(\bfd,n)$.

Consider the trivial Grassmannian bundle $$\Gr(r_2,H^0(\cO(d'_{1})))\times\Gr(r_1,H^0(\cO(d'_{2})))\to \Gr(r_1,H^0(\cO(d'_{2}))).$$ Let $U_2\subset \Gr(r_2,H^0(\cO(d'_{1})))\times\Gr(r_1,H^0(\cO(d'_{2})))$ be the open subscheme consisting of pairs $([E],[F])$ such that $E\cap (F \cdot H^0(\cO(d'_{2}-d'_{1})))=\{0\}$. Thinking of $\Gr(r_2,\cE_2)$ as in \ref{subsubsec:another}, we see that there is a well defined map
\[ q_1:U_2 \longrightarrow \Gr(r_2,\cE_{2}),\quad ([E],[F])\longmapsto ([\overline{E}],[F]) \]
where $\overline{E}$ is the image of $E$ along the quotient map $H^0(\cO(d'_{2}))\to \cE_{2,2}=\cE_2$. It follows from \Cref{prop:affine} that $U_2$ is an affine bundle over $\Gr(r_2,\cE_2)$.

The pullback of $U_2$ along $\pi_3:\Gr(r_3,\cE_{3})\to \Gr(r_2,\cE_{2})$ is an affine bundle over $\Gr(r_3,\cE_{3})$. Consider the product $\Gr(r_3,H^0(\cO(d'_{3})\times \pi_3^*U_2$ and let $U_3$ be the open subscheme in this product consisting of triples $([E_3],[E_2],[E_1])$ such that the intersection of $E_3$ with the vector subspace $H^0(\cO(d'_{3}-d'_{2}))\cdot E_2 + H^0(\cO(d'_{3}-d'_{1}))\cdot E_1$ has dimension zero. In particular, we have that $U_3$ is an open subscheme of the product $\Gr(r_3,H^0(\cO(d'_{3}))\times \Gr(r_2,H^0(\cO(d'_{2}))\times \Gr(r_3,H^0(\cO(d'_{3}))$.

We have a well defined map
\[ q_3: U_3 \longrightarrow \pi_3^*U_2,\quad ([E_3],[E_2],[E_1])\mapsto ([\overline{E_3}],[E_2],[E_1]) \]
where $\overline{E_3}$ is the image of $E_3$ in the vector space obtained by quotiening $H^0(\cO(d'_{3}))$ by the aforementioned vector subspace $H^0(\cO(d_3-d_2))\cdot E_2 + H^0(\cO(d'_{3}-d'_{1}))\cdot E_1$. Again by \Cref{prop:affine} this makes $U_3$ into an affine bundle over $\pi_3^*U_2$, hence it is also an affine bundle over $\Gr(r_3,\cE_{3})$. Repeating this process, we deduce the following.
\begin{proposition}\label{prop:U affine over Gr}
There exists an open subscheme $U_{\ell}$ of \[\Gr(r_1,H^0(\cO(d'_{1})))\times\Gr(r_2,H^0(\cO(d'_{2})))\times\cdots\times\Gr(r_\ell,H^0(\cO(d'_{\ell})))\]
which is an affine bundle over $\Gr(r_\ell,\cE_{\ell})$. Moreover, for $G=\GL_{n+1}$, $\SL_{n+1}$ or $\PGL_{n+1}$, this affine bundle is equivariant with respect to the $G$-action on $U_{\ell}$ and the $G$-action on the target.
\end{proposition}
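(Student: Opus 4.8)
The plan is to prove the statement by induction on $\ell$, following exactly the template established in the paragraphs preceding Proposition~\ref{prop:U affine over Gr}, where the cases $\ell = 1, 2, 3$ were worked out explicitly. For $\ell = 1$ there is nothing to do: take $U_1 := \Gr(r_1, H^0(\cO(d'_1)))$, which equals $\Gr(r_1, \cE_1)$ on the nose, and the identity is a (trivial) affine bundle, equivariant for the obvious $G$-action. For the inductive step, suppose $U_{i-1}$ has been constructed as a $G$-equivariant affine bundle over $\Gr(r_{i-1}, \cE_{i-1})$, sitting as an open subscheme of the partial product $\prod_{j=1}^{i-1}\Gr(r_j, H^0(\cO(d'_j)))$. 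First I would pull back $U_{i-1}$ along $\pi_i \colon \Gr(r_i, \cE_i) \to \Gr(r_{i-1}, \cE_{i-1})$ to obtain $\pi_i^* U_{i-1}$, which is an affine bundle over $\Gr(r_i, \cE_i)$ (affine bundles are stable under base change). Then inside $\Gr(r_i, H^0(\cO(d'_i))) \times \pi_i^* U_{i-1}$ I would carve out the open subscheme $U_i$ consisting of tuples $([E_i], [E_{i-1}], \dots, [E_1])$ such that $E_i$ meets the subspace $\sum_{j<i} H^0(\cO(d'_i - d'_j))\cdot E_j$ only in $\{0\}$; using the description of $\cE_i$ from \S\ref{subsubsec:another} as the cokernel of the multiplication maps, sending $E_i$ to its image $\overline{E_i}$ in $\cE_i$ gives a well-defined map $q_i \colon U_i \to \pi_i^* U_{i-1}$.

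The key step is then to show that $q_i$ exhibits $U_i$ as an affine bundle over $\pi_i^* U_{i-1}$, and this is precisely where \Cref{prop:affine} (from \S\ref{sec:quot}, which I am allowed to invoke) does the work: over a point of $\pi_i^* U_{i-1}$ the fiber of $q_i$ is the set of $r_i$-dimensional subspaces of $H^0(\cO(d'_i))$ mapping isomorphically onto a fixed $r_i$-dimensional subspace of the quotient $\cE_i$ and meeting the kernel $W := \sum_{j<i} H^0(\cO(d'_i - d'_j))\cdot E_j$ trivially — i.e., the set of linear sections of the projection, which is a torsor under $\hom(\overline{E_i}, W)$, an affine space. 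One must check that this local picture globalizes to an honest affine bundle over $\pi_i^* U_{i-1}$, which is exactly the content of \Cref{prop:affine}. Composing $q_i$ with the affine bundle $\pi_i^* U_{i-1} \to \Gr(r_i, \cE_i)$ and using that a composition of affine bundles is an affine bundle, we get that $U_i$ is an affine bundle over $\Gr(r_i, \cE_i)$. Finally, since $U_i$ is open in $\prod_{j=1}^{i}\Gr(r_j, H^0(\cO(d'_j)))$, the inductive hypothesis on the shape of $U_{i-1}$ is maintained. Taking $i = \ell$ gives the desired $U_\ell$.

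For the equivariance claim, I would observe at each stage that $G = \GL_{n+1}$, $\SL_{n+1}$, or $\PGL_{n+1}$ acts on each $H^0(\PP^n, \cO(d'_j))$ through its action on $\PP^n$, hence diagonally on the ambient product of Grassmannians and compatibly on all the bundles $\cE_i$ and on the subspaces $\sum_{j<i} H^0(\cO(d'_i - d'_j))\cdot E_j$ (multiplication of forms is $G$-equivariant). Therefore the open condition cutting out $U_i$ is $G$-invariant, the quotient maps defining $\overline{E_i}$ are $G$-equivariant, and so the map $q_i$ and the affine-bundle structure are all $G$-equivariant; passing to the composition with $\pi_i$ preserves this. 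The main obstacle — really the only non-formal point — is verifying that \Cref{prop:affine} applies in the precise form needed, namely that "subspaces lifting a given subspace of a quotient and meeting the kernel trivially" genuinely form an affine bundle and not merely a fiberwise-affine morphism; once that lemma is granted (as the excerpt permits), the rest is the bookkeeping of an induction whose base and first two steps are already spelled out in the text.
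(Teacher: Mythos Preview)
Your proposal is correct and follows essentially the same approach as the paper: the text preceding the Proposition already works out the cases $\ell=2,3$ explicitly and then simply says ``Repeating this process, we deduce the following,'' which is precisely the induction you spell out, with \Cref{prop:affine} supplying the key step and the composition of affine bundles giving the result over $\Gr(r_i,\cE_i)$. Your write-up is in fact more detailed than the paper's, including the equivariance check, and the one delicate point you flag (that \Cref{prop:affine} must be applied in a relative setting where the kernel subspace varies) is likewise glossed over in the paper.
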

%This implies that we have a surjection of equivariant Chow rings
%\[ \psi:\ch_{\GL_{n+1}}(\Gr(r_1,H^0(\cO(d'_{1})))\times\cdots\times\Gr(r_\ell,H^0(\cO(d'_{\ell})))) \rightarrow  \ch_{\GL_{n+1}}(\Gr(r_\ell,\cE_{\ell}))\]
%given by the restricting cycles to $U_{\ell}$ and then using the fact that the pullback map along an affine bundle is an isomorphism.
In the Proposition above, the $G$-action on $U_\ell$ is induced by the $G$-action on the product of Grassmannians,

Summarizing, we have the following fundamental commutative diagram of $G$-schemes, when $G=\GL_{n+1}$ or $\SL_{n+1}$:
\[
\begin{tikzcd}
& U(\bfd,n)\ar[r, hook, "\text{open}"] \ar[d, "\GL_{\bfd}\text{-torsor}"] & V(\bfd,n) \\
U_{\ell} \ar[r, hook, "\text{open}"] \ar[d, "\text{affine bundle}"] & \Gr(\bfd,n) & \\
\Gr(r_{\ell},\cE_{\ell}) & & 
\end{tikzcd}
\]
This will be helpful for computing equivariant Chow rings in  the next Sections.
%Observe also that if $Z'\subset \Gr(r_\ell,\cE_{\ell}))$ is an invariant subvariety whose restriction to $U_{\ell}$ is equal to the preimage of $Z\subset \prod_{i=1}^{\ell} \Gr(r_i,H^0(\cO(d'_{i})))$, then $\psi([Z']_{\GL_{n+1}})=([Z]_{\GL_{n+1}})$.

\subsection{Discriminant divisors}\label{sub:disc}

Let $H_{d_i}\subset H^0(\PP^n,\cO(d_i))\times\PP^n$ be the universal hypersurface of degree $d_i$ and let ${\rm pr}_i:V(\bfd,n)\times \PP^n\to H^0(\PP^n,\cO(d_i))\times\PP^n $ be the projection morphism. We define a subscheme $X_{\bfd,n}\subset V(\bfd,n)\times\PP^n$ as the (schematic) intersection ${\rm pr}_1^{-1}(H_{d_1})\cap \cdots \cap {\rm pr}_r^{-1}(H_{d_r})$. The fiber of $X_{\bfd,n}\to V(\bfd,n)$ over a point $(f_1,\dots,f_r)$ is the projective scheme defined by the homogeneous ideal $I=(f_1,\dots,f_r)$.

We will denote $Z_{\bfd,n}$ the (schematic) singular locus of the morphism $X\to V(\bfd,n)$: in particular, the points of $Z_{\bfd,n}$ are tuples $(f_1,\dots,f_r)$ such that the projective scheme in $\PP^n$ defined by the homogeneous ideal $I=(f_1,\dots,f_r)$ is either singular or of codimension $>r$. This divisor is invariant with respect to the $\GL_{n+1}$-action on $V(\bfd,n)$.

Observe that the action of $\GL_{\bfd}$ is free on $X_{\bfd,n}\cap (U(\bfd,n)\times \PP^n)$, hence we have a well defined quotient scheme $Y_{\bfd,n}=X_{\bfd,n}\cap (U(\bfd,n)\times \PP^n)/\GL_{\bfd}$, which can be regarded as a subscheme of $\Gr(\bfd,n)\times\PP^n$.

The divisor $Z_{\bfd,n}$ is invariant with respect to the $\GL_{\bfd}$-action on $V(\bfd,n)$: we denote by $D_{\bfd, n}$ the divisor in $\Gr(\bfd,n)$ defined as the geometric quotient $Z_{\bfd,n}\cap U(\bfd, n)/\GL_{\bfd}$. The points of $D_{\bfd,n}$ are tuples of vector subspaces $$(\langle f_1,\dots,f_{r_1}\rangle,\dots,\langle f_{r-r_{\ell}+1},\dots,f_{r}\rangle )$$ such that the projective scheme in $\PP^n$ defined by the homogeneous ideal $I=(f_1,\dots,f_r)$ is either singular or of codimension $>r$. Again, the divisor $D_{\bfd, n}$ is invariant with respect to the $\GL_{n+1}$-action on the product of Grassmannians $\Gr(\bfd,n)$.

The open subscheme $U_{\ell}\subset\Gr(\bfd,n)$ is an affine bundle over $\Gr(r_{\ell},\cE_{\ell})$ and $Y_{\bfd,n}\cap (U_{\ell}\times\PP^n)$ descends along this affine bundle: in this way we obtain again the subscheme $\cY_{\bfd,n}\subset \Gr(r_{\ell},\cE_{\ell})\times\PP^n$. In particular, the preimage of $S_{\bfd,n}$ in $U_{\ell}$ is equal to $D_{\bfd,n}\cap U_{\ell}$.
Putting all together, we get the following.
\begin{lemma}\label{lemma:diagram}
Let $G$ be either $\GL_{n+1}$ or $\SL_{n+1}$. Then the following diagram of $G$-schemes holds:
\[
\begin{tikzcd}
& U(\bfd,n)\smallsetminus (Z_{\bfd,n}\cap U(\bfd,n)) \ar[r, equal] \ar[d, "\GL_{\bfd}\text{-torsor}"] & V(\bfd,n)\smallsetminus Z_{\bfd,n} \\
U_{\ell}\smallsetminus (D_{\bfd,n}\cap U_{\ell}) \ar[r, equal] \ar[d, "\text{affine bundle}"] & \Gr(\bfd,n)\smallsetminus D_{\bfd,n} & \\
\Gr(r_{\ell},\cE_{\ell})\smallsetminus S_{\bfd,n} & & 
\end{tikzcd}
\]
\end{lemma}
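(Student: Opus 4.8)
The plan is to observe that the Lemma is essentially an assembly of statements already established, so that the only genuine content is the two horizontal equalities in the diagram. First I would treat the vertical arrows. The left-hand vertical map is the restriction of the $\GL_{\bfd}$-torsor $U(\bfd,n)\to\Gr(\bfd,n)$ recalled above to the $\GL_{\bfd}$-invariant open subscheme $U(\bfd,n)\smallsetminus(Z_{\bfd,n}\cap U(\bfd,n))$; its quotient is by definition $\Gr(\bfd,n)\smallsetminus D_{\bfd,n}$, and the restriction of a torsor to an invariant open subscheme of the total space is again a torsor over the corresponding open subscheme of the quotient. The bottom vertical map is the restriction of the affine bundle $U_{\ell}\to\Gr(r_{\ell},\cE_{\ell})$ of \Cref{prop:U affine over Gr} along the open immersion $\Gr(r_{\ell},\cE_{\ell})\smallsetminus S_{\bfd,n}\hookrightarrow\Gr(r_{\ell},\cE_{\ell})$; here one uses the identity noted just before the statement, that the preimage of $S_{\bfd,n}$ in $U_{\ell}$ is exactly $D_{\bfd,n}\cap U_{\ell}$, so that the preimage of $\Gr(r_{\ell},\cE_{\ell})\smallsetminus S_{\bfd,n}$ is $U_{\ell}\smallsetminus(D_{\bfd,n}\cap U_{\ell})$, together with the fact that the restriction of an affine bundle to an open subscheme of the base is again an affine bundle.

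It then remains to check the two horizontal equalities, which amount to the set-theoretic inclusions $V(\bfd,n)\smallsetminus U(\bfd,n)\subseteq Z_{\bfd,n}$ and $\Gr(\bfd,n)\smallsetminus U_{\ell}\subseteq D_{\bfd,n}$ (a closed subscheme and its open complement being determined by the underlying closed set). For the first, take $(f_1,\dots,f_r)\in V(\bfd,n)\smallsetminus U(\bfd,n)$: then one of the blocks of forms of equal degree $d'_i$ is linearly dependent, so after deleting one member of that block the homogeneous ideal $(f_1,\dots,f_r)$ is already generated by at most $r-1$ forms. Since $r-1<n$, Krull's height theorem forces the closed subscheme of $\PP^n$ it defines to be non-empty with every component of dimension $\geq n-r+1$; in particular the fibre of $X_{\bfd,n}\to V(\bfd,n)$ over $(f_1,\dots,f_r)$ is not a complete intersection of type $\bfd$ (it is not even of pure dimension $n-r$), so by the description of the singular locus $Z_{\bfd,n}$ of this morphism the point lies in $Z_{\bfd,n}$.

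The second inclusion is proved in the same way once the description of $U_{\ell}$ in \Cref{prop:U affine over Gr} is unwound: a point of $\Gr(\bfd,n)\smallsetminus U_{\ell}$ is a tuple $([E_1],\dots,[E_{\ell}])$ for which, at some stage $i$, $E_i$ meets $\sum_{j<i}H^0(\cO(d'_i-d'_j))\cdot E_j$ non-trivially, so some non-zero form of $E_i$ lies in the ideal generated by $E_1,\dots,E_{i-1}$ and, as above, $(E_1,\dots,E_{\ell})$ is generated by at most $r-1$ forms; hence every lift of the tuple to $U(\bfd,n)$ lies in $Z_{\bfd,n}$ by the previous paragraph, and the tuple lies in $D_{\bfd,n}=(Z_{\bfd,n}\cap U(\bfd,n))/\GL_{\bfd}$. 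Finally, the $G$-equivariance for $G=\GL_{n+1}$ (and a fortiori for $G=\SL_{n+1}$) of the whole diagram is inherited from the $\GL_{n+1}$-invariance of $Z_{\bfd,n}$, $D_{\bfd,n}$ and $S_{\bfd,n}$ and from the $G$-equivariance of the torsor of the first row and of the affine bundle of \Cref{prop:U affine over Gr}. I expect the only step with real mathematical content, and hence the (mild) main obstacle, to be the elimination argument of the last two paragraphs showing that a linear dependence among the defining forms forces the point into the discriminant; everything else is a matter of transporting along the diagram statements that have already been proved.
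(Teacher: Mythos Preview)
Your proof is correct and follows the same approach as the paper, which in fact gives no explicit argument beyond the sentence ``Putting all together, we get the following'' preceding the Lemma; you have simply supplied the details the paper leaves implicit. The one place worth a small clarification is the appeal to ``the previous paragraph'' in your second inclusion: there the hypothesis is not linear dependence within a block but redundancy modulo lower-degree forms, so you are really reusing only the conclusion that the ideal is generated by at most $r-1$ forms together with the Krull/dimension argument, not the full first-paragraph hypothesis---but this is exactly what you say, so the logic is sound.
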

Again, we will need this for intersection-theoretical computations.

\section{Chow rings of moduli of smooth complete intersections}\label{sec:chow}
In this Section we give a presentation of the Chow ring of $\cM^G_n(\bfd)$ in terms of generators and relations (\Cref{thm:chow}), for $G=\GL_{n+1}$, $\SL_{n+1}$ or $\PGL_{n+1}$. The relations are not explicit, in the sense that we do not express them via closed formulas involving the generators and the quantities $\bfd$ and $n$; nevertheless, once these values are fixed, the relations can be practically computed. 

We give two quick examples of concrete computations in \Cref{prop:chow M5} and \Cref{prop:chow U8}: in the first Proposition we reprove the Theorem of Izadi the Chow ring of $M_5$, the moduli space of genus five curves, using \Cref{thm:chow}; in the second one, we study the Chow ring of $K_8$, the moduli space of polarized K3 surfaces of degree eight.

For a quick recap of equivariant Chow groups and their properties, the reader can consult \cite{DL-coh}*{\S 5.1}.
\subsection{A resolution of $Z_{\bfd,n}$}\label{sec:resolution}
Recall that we defined
\[ V(\bfd,n) := H^0(\PP^n,\cO(d_1))\times H^0(\PP^n,\cO(d_2))\times\cdots\times H^0(\PP^n,\cO(d_r)). \]
and that $Z_{\bfd,n}\subset V(\bfd,n)$ is the singular locus of the map $X_{\bfd,n}\to V(\bfd,n)$ whose fiber over a point $(f_1,\dots,f_r)$ is the complete intersection $\{f_1=\cdots=f_r=0\}$ in $\PP^n$ (see \ref{sub:disc}).
In \ref{sec:other relations} we will need to compute the generators of the image of the pushforward of equivariant Chow groups
\[ {\rm CH}^{*-1}_{\GL_{n+1}\times \GL_{\bfd}}(Z_{\bfd, n})\longrightarrow \ch_{\GL_{n+1}\times\GL_{\bfd}}(V(\bfd,n)).  \]
For this reason, we are going to construct an equivariant resolution $\widetilde{Z}_{\bfd,n}\to Z_{\bfd,n}$ with the property that the Chow ring of the domain admits a particularly nice presentation.

Set $s=n-r+2$ and let ${\rm{Mat}}_{n+1,s}$ be the vector space of matrices with $n+1$ rows and $s$ columns and define ${\rm{P}}_s\subset \GL_s$ as the parabolic subgroup formed by those matrices whose first column is zero except for the first entry. The group ${\rm{P}}_s$ acts linearly on ${\rm{Mat}}_{n+1,s}$ via the formula $B\cdot Q:=QB^{-1}$. 

Let ${\rm{Mat}}_{n+1,s}^o \subset \ {\rm{Mat}_{n+1,s}}$ be the open subscheme formed by matrices of maximal rank: then ${\rm P}_s$ acts freely on ${\rm{Mat}}_{n+1,s}^o$ and the quotient is isomorphic to the flag variety ${\rm Fl}_{n+1,s}$ parametrizing partial flags $L\subset F$ in $\mathbb{A}^{n+1}$ where $\dim(L)=1$ and $\dim(F)=n-r+2$.

Observe also that the group $\GL_{n+1}$ acts on ${\rm{Mat}}_{n+1,s}$ via left multiplication. This action descends to the an action on the product $\Gr(\bfd,n)\times {\rm Fl}_{n+1,s}$.

A point in $V(\bfd, n)\times {\rm{Mat}}_{n+1,s}$ amounts to $r$ homogeneous forms $f_1,\dots,f_r$ with $\deg(f_i)=r_i$ together with an $(n+1)\times s$ matrix
\[ Q=
\begin{pNiceArray}{c|c|c|c}
  &  & &  \\
 q_1 & q_2 & \hdots & q_r\\
  &  & &
\end{pNiceArray}
=
\begin{pNiceMatrix}
  q_{1,0} & q_{2,0}  & \hdots & q_{r,0} \\
 q_{1,1} & q_{2,1} & \hdots & q_{r,1} \\
  \vdots &  \vdots  &  &\vdots \\
  q_{1,n} & q_{2,n} & \hdots & q_{r,n}
\end{pNiceMatrix}.
\] 
Let $\widetilde{W}_{\bfd,n}\subset V(\bfd, n)\times{\rm{Mat}}_{n+1,s}$ be the closed subscheme of tuples $(f_1,\dots,f_r,Q)$ satisfying the matrix equation $J(f_1,\dots,f_r)(q_1)\cdot Q=0$, that is
\[
\begin{pNiceMatrix}
  \partial_{x_0}f_1(q_1) & \partial_{x_2}f_1(q_1)  & \hdots & \partial_{x_n}f_1(q_1) \\
  \partial_{x_0}f_2(q_1) & \partial_{x_2}f_2(q_1)  & \hdots & \partial_{x_n}f_2(q_1) \\
  \vdots &  \vdots  &  &\vdots \\
 \partial_{x_0}f_r(q_1) & \partial_{x_2}f_r(q_1)  & \hdots & \partial_{x_n}f_r(q_1)
\end{pNiceMatrix}
\begin{pNiceMatrix}
  q_{1,0} & q_{2,0}  & \hdots & q_{r,0} \\
 q_{1,1} & q_{2,1} & \hdots & q_{r,1} \\
  \vdots &  \vdots  &  &\vdots \\
  q_{1,n} & q_{2,n} & \hdots & q_{r,n}
\end{pNiceMatrix}= 0\]
where $J(f_1,\dots,f_r)(q_1)$ is the Jacobian matrix associated to the form $f_1,\dots,f_r$ evaluated at the vector $q_1$. We can interpret $\widetilde{W}_{\bfd,n}$ as the closed subscheme of tuples $(f_1,\dots,f_r,Q)$ such that the point $[q_1]$ in $\PP^n$ is a singular point for the projective scheme defined by the homogeneous ideal $I=(f_1,\dots,f_r)$, and the vector subspace $Q\subset \mathbb{A}^{n+1}$ contains $q_1$ and is contained in the kernel of the Jacobian matrix. This subscheme is $\GL_{n+1}$-invariant.

The geometric quotient of $\widetilde{W}_{\bfd,n}\cap (V(\bfd,n)\times{\rm Mat}_{n+1,s}^o)$  by the (free) ${\rm P}_s$-action is a closed subscheme of $V(\bfd,n)\times {\rm Fl}_{n+1,s}$ which we denote $\widetilde{Z}_{\bfd,n}$. By construction, the points of $\widetilde{Z}_{\bfd ,n}$ correspond to tuples $((f_1,\dots,f_r),p\in E\subset \PP^n)$ such that $p$ is a singular point of the projective subscheme $\{f_1=f_2=\cdots=f_r=0\}$ and $E\simeq \PP^{s-1}$ is a projective subspace contained in the projective variety defined by the matrix equation $J(f_1,\dots,f_r)(p)\cdot (x_0,\dots ,x_n)=0$ (although $J(f_1,\dots,f_r)(p)$ is not well defined, the projective variety is actually well defined). Observe that the projection on $V(\bfd,n)$ induces a map $\widetilde{Z}_{\bfd,n}\to Z_{\bfd,n}$.
\begin{lemma}\label{lemma:resolution}
The pushforward morphism
\[ \ch_{\GL_{n+1}\times\GL_{\bfd}}(\widetilde{Z}_{\bfd,n}) \longrightarrow \ch_{\GL_{n+1}\times\GL_{\bfd}}(Z_{\bfd,n}) \]
is surjective. Moreover the pushforward of the fundamental class $[\widetilde{Z}_{\bfd,n}]$ along $\pr_1:V(\bfd,n)\times {\rm Fl}_{n+1,s}\to V(\bfd,n)$ is equal to $[Z_{\bfd,n}]$.
\end{lemma}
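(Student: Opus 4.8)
The plan is to analyze the map $\widetilde{Z}_{\bfd,n}\to Z_{\bfd,n}$ fiber by fiber and show it is a projective bundle (or more precisely a Grassmannian-type bundle) over a dense open subset, then handle the boundary by a dimension count. First I would stratify $Z_{\bfd,n}$: let $Z_{\bfd,n}^{\mathrm{gen}}\subset Z_{\bfd,n}$ be the locus of tuples $(f_1,\dots,f_r)$ defining a complete intersection of the expected codimension $r$ with a single ordinary double point (or more generally with singular locus of dimension $0$ and corank exactly $1$ at each singular point). Over such a point, the Jacobian matrix $J(f_1,\dots,f_r)(p)$ at a singular point $p$ has rank exactly $r-1$, so its kernel in $\AA^{n+1}$ has dimension $n+1-(r-1)=n-r+2=s$; this kernel automatically contains $q_1=p$ (by the Euler relation, since the $f_i$ are homogeneous and $p$ lies on the hypersurfaces). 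Hence the choice of a subspace $F$ with $\langle p\rangle\subset F\subset \ker J$, $\dim F = s$, is forced: $F=\ker J$ is unique. So over $Z_{\bfd,n}^{\mathrm{gen}}$ the map $\widetilde{Z}_{\bfd,n}\to Z_{\bfd,n}$ is finite, and in fact an isomorphism onto its image where the singular point is unique. This shows the generic fiber is a single reduced point, which is what gives the degree-one statement $\pr_{1*}[\widetilde{Z}_{\bfd,n}]=[Z_{\bfd,n}]$ once we know $\widetilde{Z}_{\bfd,n}$ is irreducible of the same dimension as $Z_{\bfd,n}$.

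Next I would check that $\widetilde{Z}_{\bfd,n}$ is irreducible and that the map to $Z_{\bfd,n}$ is birational. For irreducibility, I would use the projection $\widetilde{W}_{\bfd,n}\to \mathrm{Mat}_{n+1,s}^o$ (equivalently, after quotienting, $\widetilde{Z}_{\bfd,n}\to \mathrm{Fl}_{n+1,s}$ via $((f_\bullet),p\in E)\mapsto (p\in E)$): for a fixed flag $\langle q_1\rangle\subset F$, the conditions on $(f_1,\dots,f_r)$ — namely that $p=[q_1]$ lie on each hypersurface $f_i=0$ and that $F\subset\ker J(f_1,\dots,f_r)(q_1)$, i.e. $\partial_{x_j}f_i(q_1)$ annihilates $F$ for all $i$ — are linear in the coefficients of the $f_i$. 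Thus each fiber is a linear subspace of $V(\bfd,n)$ of constant codimension (the codimension is $r(1+\dim F)=r(s+1)$ minus the slack from the Euler relations, which is again a constant), so $\widetilde{W}_{\bfd,n}\cap(V\times\mathrm{Mat}^o)$ is a vector bundle over $\mathrm{Mat}_{n+1,s}^o$, hence irreducible and smooth, and $\widetilde{Z}_{\bfd,n}$ is irreducible. Comparing dimensions: $\dim\widetilde{Z}_{\bfd,n}=\dim\mathrm{Fl}_{n+1,s}+(\dim V - r(s+1-\text{slack}))$, and this must match $\dim Z_{\bfd,n}=\dim V - 1$ because the generic fiber is a point; I would verify this equality directly, or simply deduce it from the fact that the generic fiber is a reduced point together with $Z_{\bfd,n}$ being a divisor. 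Surjectivity of $\widetilde{Z}_{\bfd,n}\to Z_{\bfd,n}$ is clear: any $(f_1,\dots,f_r)$ with singular or excess-dimensional intersection admits a point $p$ where the Jacobian has rank $<r$, hence a kernel of dimension $\geq s$ containing $p$, so some $F$ of dimension $s$ exists.

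With these geometric facts in hand, the two assertions follow from standard intersection theory. Surjectivity of $\ch_{\GL_{n+1}\times\GL_{\bfd}}(\widetilde{Z}_{\bfd,n})\to\ch_{\GL_{n+1}\times\GL_{\bfd}}(Z_{\bfd,n})$: since the map is proper, surjective, and generically finite of degree one (it is birational onto $Z_{\bfd,n}$, restricting to an isomorphism over the dense open $Z_{\bfd,n}^{\mathrm{gen}}$), the projection formula combined with the localization exact sequence for the closed complement $Z_{\bfd,n}\setminus Z_{\bfd,n}^{\mathrm{gen}}$ (which has strictly smaller dimension) gives surjectivity by Noetherian induction — this is the standard argument that a proper birational map induces a surjection on Chow groups after allowing contributions from the exceptional/boundary loci, done equivariantly since all constructions are $\GL_{n+1}\times\GL_{\bfd}$-equivariant. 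The equality $\pr_{1*}[\widetilde{Z}_{\bfd,n}]=[Z_{\bfd,n}]$ is then immediate: $\pr_1$ factors as $\widetilde{Z}_{\bfd,n}\to Z_{\bfd,n}\hookrightarrow V(\bfd,n)$, and since $\widetilde{Z}_{\bfd,n}\to Z_{\bfd,n}$ is birational (degree one onto its image), $\pr_{1*}[\widetilde{Z}_{\bfd,n}]=1\cdot[Z_{\bfd,n}]$.

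The main obstacle I anticipate is the careful verification that the generic fiber is a \emph{single reduced point}, i.e. that for a generic point of (each component of) $Z_{\bfd,n}$ the singular point $p$ is unique and the Jacobian corank there is exactly $1$, so that $\ker J$ has dimension exactly $s$ and the flag $\langle p\rangle\subset\ker J$ is uniquely determined. This requires knowing the generic behavior of singularities of complete intersections — that the discriminant's generic point parametrizes a variety with one node — which is classical but needs either a reference to Benoist's thesis (where the geometry of $Z_{\bfd,n}$, sometimes reducible when the $d_i$ are not distinct, is analyzed) or a short direct argument via a versal deformation. If $Z_{\bfd,n}$ has several components (e.g. a component where the intersection drops to codimension $>r$), I would need to treat each component's generic point separately and check the corank-one / unique-singular-point claim still gives a degree-one cover there, or else argue that such components, if any, do not affect the fundamental class computation because they are not divisorial or are dominated with degree one as well.
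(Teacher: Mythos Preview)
Your approach is correct and is exactly the paper's: the entire proof there is the single sentence ``This follows from the fact that $\widetilde{Z}_{\bfd,n}\to Z_{\bfd,n}$ is surjective and birational,'' and everything you wrote is a careful unpacking of why that holds and why it implies both assertions. Your worry at the end is well-placed: the paper itself later (just before Proposition~4.1) acknowledges that in characteristic~$2$ with $n$ even the pushforward is $2[Z_{\bfd,n}]$ rather than $[Z_{\bfd,n}]$, citing Benoist for the analysis of the generic singular fiber, so the degree-one claim implicitly uses the hypothesis $\cha k\neq 2$ or $n$ odd.
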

\begin{proof}
This follows from the fact that $\widetilde{Z}_{\bfd,n}\to Z_{\bfd,n}$ is surjective and birational.
\end{proof}

\subsection{Relations}\label{sec:other relations}
From \Cref{lemma:diagram} we know that $\Gr(\bfd,n)\smallsetminus D_{\bfd,n}\to \Gr(r_{\ell},\cE_{\ell})\smallsetminus S_{\bfd, n}$ is a $\GL_{n+1}$-equivariant affine bundle. This implies that
\begin{align*}
    \ch_{\GL_{n+1}}(\Gr(r_{\ell},\cE_{\ell})\smallsetminus S_{\bfd, n})\simeq \ch_{\GL_{n+1}}(\Gr(\bfd,n)\smallsetminus D_{\bfd,n}),
\end{align*}
because the pullback along an affine bundle induces an isomorphism of Chow rings.
\Cref{lemma:diagram} also implies that we have an isomorphism
\[ \ch_{\GL_{n+1}\times\GL_{\bfd}}(V(\bfd,n)\smallsetminus Z_{\bfd,n}) \simeq \ch_{\GL_{n+1}}(\Gr(\bfd,n)\smallsetminus D_{\bfd,n})  \]
because $V(\bfd,n)\smallsetminus Z_{\bfd,n}\to \Gr(\bfd,n)\smallsetminus D_{\bfd,n}$ is a $\GL_\bfd$-torsor.
Putting all together, we deduce
\[ \ch_{\GL_{n+1}}(\Gr(r_\ell,\cE_\ell)\smallsetminus S_{\bfd,n}) \simeq \ch_{\GL_{n+1}\times\GL_{\bfd}}(V(\bfd,n)\smallsetminus Z_{\bfd,n}). \]

The localization sequence for equivariant Chow groups tells us that we have a short exact sequence of groups
\[  {\rm CH}^{*-1}_{\GL_{n+1}}(Z_{\bfd,n})\longrightarrow  \ch_{\GL_{n+1}\times\GL_{\bfd}}(V(\bfd,n)) \longrightarrow \ch_{\GL_{n+1}\times\GL_{\bfd}}(V(\bfd,n)\smallsetminus Z_{\bfd,n})\to 0, \]
so that the ideal of relations $R$ appearing in the formula
\begin{equation}\label{eq:R}
    \ch_{\GL_{n+1}\times\GL_{\bfd}}(V(\bfd,n)\smallsetminus Z_{\bfd,n})\simeq \ch_{\GL_{n+1}\times\GL_{\bfd}}(V(\bfd,n))/R,
\end{equation}  
is generated by the puhforward of cycles in $Z_{\bfd,n}$.
\Cref{lemma:resolution} implies that $R$ is equal to the image of the pushforward
\[  {\rm CH}^{*-1}_{\GL_{n+1}\times\GL_{\bfd}}(\widetilde{Z}_{\bfd,n}) \longrightarrow  \ch_{\GL_{n+1}\times\GL_{\bfd}}(V(\bfd,n)). \]
Observe that $\widetilde{Z}_{\bfd,n}$ is an equivariant vector bundle over the flag variety ${\rm Fl}_{n+1,s}$. This implies that the generators of the equivariant Chow ring of $\widetilde{Z}_{\bfd,n}$ as $\ch_{\GL_{n+1}\times\GL_{\bfd}}$-module are obtained by pulling back the generators of $\ch_{\GL_{n+1}\times\GL_{\bfd}}({\rm Fl}_{n+1,s})$ as $\ch_{\GL_{n+1}\times\GL_{\bfd}}$-module.

Recall that the flag variety ${\rm Fl}_{n+1,s}$ has a universal partial flag $\cL\subset\cF\subset\cO^{\oplus n+1}$ of equivariant locally free sheaves, where $\cL$ has rank $1$ and $\cF$ has rank $s$. The equivariant Chow ring of the flag variety is generated, as $\ch_{\GL_{n+1}\times\GL_{\bfd}}$-module, by monomials in the Chern classes of $\cL$ and $\cF/\cL$.

More precisely, set $\beta_1=c_1(\cL)$ and $b_i=c_i(\cF/\cL)$ for $i=1,\dots,s-1$: then the generators are given by polynomials $P(\beta_1,b_1,b_2,\dots,b_{s-1})$ and thanks to the relations in the flag variety, we can restrict ourselves to monomials that have degree $< s$ in $\beta_1$ and total degree $\leq rs-1$.

Consider the equivariant diagram
\[
\begin{tikzcd}
\widetilde{Z}_{\bfd,n} \ar[r, hook, "i"] \ar[dr] & V(\bfd,n)\times {\rm Fl}_{n+1,s} \ar[d, "{\rm pr}_1"] \ar[r, "{\rm pr}_2"] & {\rm Fl}_{n+1,s} \ar[d, "\pi"] \\
& V(\bfd,n) \ar[r] & \spec{k}
\end{tikzcd}
\]
Then the ideal of relations $R$ appearing in (\ref{eq:R}) is generated by expressions of the form 
\[{\rm pr}_{1*}i_*(i^*{\rm pr}_2^*P(\beta_1,b_1,\dots,b_{s-1})),\]
where $P$ is a monomial as described before. The projection formula readily implies that these expressions can be rewritten as 
\[ {\rm pr}_{1*}([\widetilde{Z}_{\bfd,n}]\cdot {\rm pr}_2^*P(\beta_1,b_1,\dots,b_{s-1})),\]
where $[\widetilde{Z}_{\bfd,n}]$ is the $\GL_{n+1}\times\GL_{\bfd}$-equivariant fundamental class of $\widetilde{Z}_{\bfd,n}$.

\subsubsection{The fundamental class of $\widetilde{Z}_{\bfd,n}$}\label{sub:Zdn}
Consider the cartesian diagram of quotient stacks
\[ \xymatrix{
[\Gr(\bfd, n)\times {\rm Fl}_{n+1,s}/\GL_{n+1}] \ar[r]^{\hspace{25pt} \pr_2} \ar[d]^{\pr_1} & [{\rm Fl}_{n+1,s}/\GL_{n+1}] \ar[d]^{\pi} \\
[\Gr(\bfd, n)/\GL_{n+1}] \ar[r] & \cB\GL_{n+1}
}\]
Our goal is to compute $[\widetilde{Z}_{\bfd,n}]$, the $\GL_{n+1}$-equivariant fundamental class of $\widetilde{Z}_{\bfd,n}$.

In \ref{sec:resolution} we introduced the parabolic subgroup ${{\rm P}}_s\subset \GL_{s}$. The quotient of the open subset ${{\rm Mat}}_{n+1,s}^\circ\subset {{\rm Mat}}_{n+1,s} $ of matrices of maximal rank by the natural action of ${{\rm P}}_s$ is the flag variety ${{\rm Fl}}_{n+1,s}$.

Let $\Gamma_{\bfd,n}:=\GL_{n+1}\times \GL_{\bfd} \times {\rm P}_s$. Then we have
\[ \ch_{\GL_{n+1}\times\GL_\bfd}(V(\bfd,n)\times {\rm Fl}_{n+1,s})\simeq \ch_{\Gamma_{\bfd,n}}(V(\bfd,n)\times {\rm{Mat}}^o_{n+1,s}). \]
Moreover, there is a surjection of equivariant Chow rings
\begin{equation}\label{eq:surjection} \ch_{\Gamma_{\bfd,n}}(V(\bfd, n)\times{\rm{Mat}}_{n+1,s})\longrightarrow \ch_{\Gamma_{\bfd, n}}(U(\bfd,n)\times {\rm{Mat}}_{n+1,s}^o) \end{equation}
given by the pullback along the open embedding. 
As $[V(\bfd, n)\times{\rm {Mat}}_{n+1,s}/\Gamma_{\bfd,n}]$ is a vector bundle over $\cB \Gamma_{\bfd,n}$, we deduce that
\[ \ch_{\Gamma_{\bfd,n}}(V(\bfd, n)\times{\rm{Mat}}_{n+1,s})\simeq \ch(\cB\Gamma_{\bfd,n}). \]
Recall (\cite{EG}*{\S3.2}) that the Chow ring of $\cB \GL_m$ is isomorphic to the ring of polynomials in the Chern classes of the universal rank $m$ vector bundle $\cV_m \to \cB\GL_m$.

The stack $\cB {\rm P}_s$ classifies vector bundles of rank $s$ together with a subbundle of rank $1$. Let $\cF$ be the universal vector bundle of rank $s$ on $\cB {\rm P}_s$, let $\cL\subset\cF$ be the universal vector subbundle  of rank $1$, so that $\cF/\cL$ is a universal quotient bundle of rank $s-1$. 

Then it easily follows from \cite{EG}*{Proposition 6} that the Chow ring of $\cB {\rm P}_s$ is the ring of polynomials in the first Chern class of $\cL$ and in the Chern classes of $\cF/\cL$. As $\ch(\cB(G\times H))\simeq \ch(\cB G)\otimes\ch(\cB H)$, we deduce
\begin{equation}\label{eq:chow VxF} \ch(\cB\Gamma_{\bfd,n})\simeq \QQ[\{c_i\}_{i\leq n+1},\{a_{j,k}\}_{j\leq \ell,k\leq r_j},\beta_1,\{ b_m \}_{m\leq s-1} ]. \end{equation}

In \ref{sec:resolution} we defined the closed subscheme $\widetilde{W}_{\bfd,n}\subset V(\bfd,n)\times{\rm Mat}_{n+1,s}$.
Observe that the $\Gamma_{\bfd,n}$-equivariant class of $\widetilde{W}_{\bfd,n}$ is sent to the $\GL_{n+1}$-equivariant fundamental class of $\widetilde{Z}_{\bfd,n}$ by the surjection (\ref{eq:surjection}), hence if we compute $[\widetilde{W}_{\bfd,n}]_{\Gamma_{\bfd,n}}$ in terms of the generators appearing in (\ref{eq:chow VxF}), we will also get an explicit expression for $[\widetilde{Z}_{\bfd,n}]$.

\subsubsection{The fundamental class of $\widetilde{W}_{\bfd,n}$}
We know from \cite{EG}*{Proposition 6} that for every special group $G$ with maximal subtorus $T$ and every smooth scheme $X$ endowed with a $G$-action, there is an inclusion of rings $\ch_G(X)\hookrightarrow\ch_T(X)$ whose image corresponds to the subring of $W$-invariant element, where $W$ is the Weyl group associated to $T\subset G$. 

In particular, if $Y\subset X$ is a $G$-invariant variety, the image of $[Y]_G$ in $\ch_T(X)$ is equal to $[Y]_T$; in other terms, by knowing an explicit expression for $[Y]_T$, one immediately get a formula for $[Y]_G$ by just rewriting that expression in terms of the $W$-invariant generators. We apply this argument to compute $[\widetilde{W}_{\bfd,n}]_{\Gamma_{\bfd, n}}$.

Let $T_{n,r}\subset \Gamma_{\bfd, n}$ be the maximal subtorus of diagonal matrices. We have $T_{n,r}\simeq \gm^{n+1}\times \gm^r \times \gm^s$, and
\begin{equation} \label{eq:chow BT} \ch(\cB T_{n,r}) \simeq \ch_{T_{n,r}}(\spec{k}) \simeq \QQ[t_1,\dots,t_{n+1},\gamma_1,\dots,\gamma_r,\beta_1,\dots,\beta_s]. \end{equation}
where the $t_i$ are pulled back from $\ch(\cB \gm^{n+1})$, the $\gamma_j$ from $\ch(\cB \gm^r)$ and the $\beta_k$ from $\ch(\cB\gm^s)$. For the latters, we adopt the convention that $\beta_k$ is the first Chern class of the $\gm$-representation of weight $-1$, as this choice is slightly more convenient for future computations.

%More precisely, we pick the $t_i$ to be the first Chern class of the equivariant line bundle associated to the projection on the $i^{\rm th}$-factor of $\gm^{n+1}$, the $\gamma_j$ to be the first Chern class of the equivariant line bundle associated to the projection on the $j^{\rm th}$-factor of $\gm^r$ and $\beta_k$ to be \emph{minus} the first Chern class of the projection on the $k^{\rm th}$-factor of $\gm^s$.
\begin{remark}
The generators appearing in (\ref{eq:chow VxF}) can be rewritten in terms of the generators appearing in (\ref{eq:chow BT}) as follows: the elements $c_i$ are the elementary symmetric polynomials of degree $i$ in $t_1,\dots,t_{n+1}$; the elements $\alpha_{j,k}$ are the elementary symmetric polynomials of degree $k$ in $\gamma_{r_1+\cdots r_{j-1}+1},\dots,\gamma_{r_1+\cdots+r_j}$; the elements $b_m$ are the elementary symmetric polynomial of degree $m$ in the $\beta_2,\dots,\beta_s$ multiplied by $(-1)^m$, and the two $\beta_1$ coincide. 
\end{remark}
Computing $[\widetilde{W}_{\bfd,n}]_{T_{n,r}}$ is quite easy, because $\widetilde{W}_{\bfd,n}$ is a complete intersection of $T_{n,r}$-invariant hypersurfaces $H_{i,j}$ of equation $$F_{i,j} = \sum_{k=0}^{n} \partial_{x_k} f_i(q_0)\cdot q_{j,k} =0.$$
Observe that an element $(\lambda_1,\dots,\lambda_{n+1},\mu_1,\dots,\mu_r,\nu_1,\dots,\nu_s)$ of $T_{n,r}$ acts on a polynomial $F_{i,j}$ as
\[ F_{i,j} \longmapsto \mu_i\beta_0^{1-d_i}\beta_j^{-1} F_{i,j}, \]
hence by \cite{DLFV}*{Lemma 2.6} the fundamental class of $H_{i,j}$ is $\gamma_i + (d_i-1)\beta_1 + \beta_j$. We deduce
\begin{equation}\label{eq:Wtilde}
    [\widetilde{W}_{\bfd,n}]_{T_{n,r}} = \prod_{1\leq i \leq r, 1\leq j\leq s} [H_{i,j}]_{T_{n,r}} = \prod_{1\leq i\leq r, 1\leq j \leq s} (\gamma_i + (d_i-1)\beta_1 + \beta_j).
\end{equation}
\begin{remark}
Observe that this formula has the symmetries we expected it to have, i.e. is invariant with respect to the Weyl group associated to the torus $T_{n,r}\subset\Gamma_{\bfd,n}$. 

In particular, an explicit expression of $[\widetilde{Z}]_{\GL_{n+1}}$ can be obtained by rewriting (\ref{eq:Wtilde}) using the generators appearing in (\ref{eq:chow VxF}). On the other hand, the formulation in (\ref{eq:Wtilde}) is quite more manageable from a computational point of view.
\end{remark}

Expanding the expression in (\ref{eq:Wtilde}) we get the following.
\begin{lemma}\label{lm:W}
We have
   \[ [\widetilde{Z}_{\bfd,n}]=[\widetilde{W}_{\bfd,n}]_{T_{n,r}} = \sum_{0\leq a_1,\dots,a_r \leq s} \gamma_1^{a_1}\cdots\gamma_r^{a_r}\cdot C_s(a_1,\dots,a_r) \]
where
\[
    C_s(a_1,\dots,a_r)=\prod_{i=1}^{r} \sigma_{s-a_i}((d_i-1)\beta_1+\beta_1,\dots,(d_i-1)\beta_1+\beta_s).
\]
and the $\sigma_d$ stand for the elementary symmetric polynomials of degree $d$.
\end{lemma}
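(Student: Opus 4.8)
The plan is to expand the product in (\ref{eq:Wtilde}) by grouping its factors according to the index $i$ and treating each group as a polynomial in the single variable $\gamma_i$. Writing
\[
    [\widetilde{W}_{\bfd,n}]_{T_{n,r}} = \prod_{i=1}^{r}\left( \prod_{j=1}^{s} \bigl(\gamma_i + (d_i-1)\beta_1 + \beta_j\bigr)\right),
\]
it suffices to expand each inner product and then multiply the $r$ resulting polynomials together.

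First I would recall the elementary identity, valid in any commutative ring: for elements $u_1,\dots,u_s$ and an indeterminate $t$,
\[
    \prod_{j=1}^{s}(t+u_j) = \sum_{a=0}^{s} t^{a}\,\sigma_{s-a}(u_1,\dots,u_s),
\]
where $\sigma_d$ is the $d$-th elementary symmetric polynomial and $\sigma_0 = 1$. Specializing $t = \gamma_i$ and $u_j = (d_i-1)\beta_1 + \beta_j$ for $j = 1,\dots,s$ yields
\[
    \prod_{j=1}^{s}\bigl(\gamma_i + (d_i-1)\beta_1 + \beta_j\bigr) = \sum_{a_i=0}^{s} \gamma_i^{a_i}\,\sigma_{s-a_i}\bigl((d_i-1)\beta_1+\beta_1,\dots,(d_i-1)\beta_1+\beta_s\bigr).
\]

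Next I would multiply these $r$ identities together and distribute the product over the sums. Since the $i$-th factor is a sum indexed by $a_i \in \{0,\dots,s\}$, the full product becomes a sum indexed by tuples $(a_1,\dots,a_r) \in \{0,\dots,s\}^{r}$, and the term attached to a given tuple is $\gamma_1^{a_1}\cdots\gamma_r^{a_r}$ times $\prod_{i=1}^{r}\sigma_{s-a_i}\bigl((d_i-1)\beta_1+\beta_1,\dots,(d_i-1)\beta_1+\beta_s\bigr)$, which is exactly the quantity $C_s(a_1,\dots,a_r)$ of the statement. Together with the identification $[\widetilde{Z}_{\bfd,n}] = [\widetilde{W}_{\bfd,n}]_{T_{n,r}}$, already obtained via the injection $\ch_{\Gamma_{\bfd,n}} \hookrightarrow \ch_{T_{n,r}}$ onto the Weyl-invariant subring together with the surjection (\ref{eq:surjection}), this gives the claimed formula.

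I expect no real obstacle: the argument is a purely formal multinomial expansion, and the only point meriting attention is that each index $a_i$ ranges over $\{0,\dots,s\}$, so that every $\sigma_{s-a_i}$ that appears is a genuine (and, when $a_i=s$, constant) elementary symmetric polynomial and no terms are lost. One could equivalently phrase the whole computation at once as the coefficient extraction $C_s(a_1,\dots,a_r) = [\gamma_1^{a_1}\cdots\gamma_r^{a_r}]\,[\widetilde{W}_{\bfd,n}]_{T_{n,r}}$, but carrying it out factor by factor as above keeps the underlying symmetric-function identity transparent.
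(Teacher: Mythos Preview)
Your proposal is correct and matches the paper's approach exactly: the paper simply states that the lemma follows by ``expanding the expression in (\ref{eq:Wtilde}),'' and what you have written is precisely that expansion carried out factor by factor via the identity $\prod_{j=1}^{s}(t+u_j)=\sum_{a=0}^{s}t^{a}\sigma_{s-a}(u_1,\dots,u_s)$.
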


\begin{remark}
The formula for the equivariant fundamental class in \Cref{lm:W} is a priori a formula in the $\beta_i$. Nevertheless, it is symmetric in these variables, hence it is actually a polynomial in $\beta_1$ and $b_i=(-1)^i\sigma_i(\beta_2,\dots,\beta_s)$ for $i=1,\dots,s-1$.
\end{remark}

\subsubsection{End of the computation}
Recall that we have an equivariant diagram
\[
\begin{tikzcd}
\widetilde{Z}_{\bfd,n} \ar[r, hook, "i"] \ar[dr] & V(\bfd,n)\times {\rm Fl}_{n+1,s} \ar[d, "{\rm pr}_1"] \ar[r, "{\rm pr}_2"] & {\rm Fl}_{n+1,s} \ar[d, "\pi"] \\
& V(\bfd,n) \ar[r] & \spec{k}.
\end{tikzcd}
\]
and that the Chow ring of the flag variety ${\rm Fl}_{n+1,s}$ is algebraically generated over $\ch(\cB(\GL_{n+1}\times\GL_\bfd))$ by $\beta_1$, the first Chern class of the tautological line bundle, and by $b_1,\dots,b_{s-1}$, the Chern classes of the tautological quotient bundle. Recall also that $b_i=(-1)^i\sigma_i(\beta_2,\dots,\beta_s)$, i.e. the class $b_i$ is up to a sign the symmetric polynomial in its Chern roots $\beta_2,\dots,\beta_s$.

Recall also from \ref{sub:Zdn} that $\ch(\cB(\GL_{n+1}\times\GL_\bfd))$ is the ring of polynomials in the variables $c_1,\dots,c_{n+1}$ (the generators that come from $\ch(\cB\GL_{n+1})$) and in the $a_{j,k}$ for $j\leq \ell$ and $k\leq r_j$: the latters are the elementary symmetric polynomials of degree $k$ in $\gamma_{r_1+\cdots r_{j-1}+1},\dots,\gamma_{r_1+\cdots+r_j}$. In other words, if we let $\mathfrak{S}_{\bfd}:=\prod_{i=1}^{\ell} \mathfrak{S}_{r_i}$ be a product of symmetric groups, we can write
\[ \ch(\cB\GL_{\bfd}) \simeq \QQ[\{ a_{j,k} \} ] = \QQ[\gamma_1,\dots,\gamma_r]^{\mathfrak{S}_\bfd}  \]
where $\mathfrak{S}_{r_i}$ acts on $\gamma_{r_1+\cdots +r_{i-1}+1},\dots,\gamma_{r_1+\cdots +r_i}$ by permutation.

With this setup in mind, we are ready to state the main result of the Section.
\begin{theorem}\label{thm:chow}
We have
\[ \ch(\cM^{\GL}_n(\bfd))\simeq \QQ[c_1,\dots,c_{n+1},\gamma_1,\dots,\gamma_r]^{\mathfrak{S}_{\bfd}}/R   \]
where the ideal of relations $R$ is generated by cycles of the form
\[ \sum_{0\leq a_1,\dots,a_r \leq s} \gamma_1^{a_1}\cdots\gamma_r^{a_r}\cdot \pi_*\left( C_s(a_1,\dots,a_r)P(\beta_1,b_1,\dots,b_{s-1})\right) \]
for $P(\beta_1,b_1,\dots,b_{s-1})$ any monomial of degree $< s$ in $\beta_1$ and total degree $\leq rs -1$, and
\[ C_s(a_1,\dots,a_r)=\prod_{i=1}^{r} \sigma_{s-a_i}((d_i-1)\beta_1+\beta_1,\dots,(d_i-1)\beta_1+\beta_s) \]
\end{theorem}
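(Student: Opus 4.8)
The plan is to assemble the ingredients prepared in this Section. First I would record the chain of isomorphisms obtained just above: by definition $\ch(\cM^{\GL}_n(\bfd))=\ch_{\GL_{n+1}}(\Hilb_{\bfd,n}^{\rm sm})$, this equals $\ch_{\GL_{n+1}}(\Gr(r_\ell,\cE_\ell)\smallsetminus S_{\bfd,n})$ by \Cref{prop:presentation}, and homotopy invariance along the affine bundle together with descent along the $\GL_{\bfd}$-torsor in \Cref{lemma:diagram} identifies this with $\ch_{\GL_{n+1}\times\GL_{\bfd}}(V(\bfd,n)\smallsetminus Z_{\bfd,n})$. The equivariant localization sequence then presents the latter ring as $\ch_{\GL_{n+1}\times\GL_{\bfd}}(V(\bfd,n))/R$, where $R$ is the image of the pushforward ${\rm CH}^{*-1}_{\GL_{n+1}\times\GL_{\bfd}}(Z_{\bfd,n})\to\ch_{\GL_{n+1}\times\GL_{\bfd}}(V(\bfd,n))$. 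Since $V(\bfd,n)$ is a linear representation, $\ch_{\GL_{n+1}\times\GL_{\bfd}}(V(\bfd,n))\simeq\ch(\cB(\GL_{n+1}\times\GL_{\bfd}))$, which by \cite{EG} and the identification $\ch(\cB\GL_{\bfd})\simeq\QQ[\gamma_1,\dots,\gamma_r]^{\mathfrak{S}_{\bfd}}$ recalled above equals $\QQ[c_1,\dots,c_{n+1},\gamma_1,\dots,\gamma_r]^{\mathfrak{S}_{\bfd}}$. This produces the ring of generators appearing in the statement, so it remains to describe $R$.

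For the relations I would invoke \Cref{lemma:resolution}: the map $\widetilde{Z}_{\bfd,n}\to Z_{\bfd,n}$ is surjective and birational, so the induced pushforward $\ch_{\GL_{n+1}\times\GL_{\bfd}}(\widetilde{Z}_{\bfd,n})\to\ch_{\GL_{n+1}\times\GL_{\bfd}}(Z_{\bfd,n})$ is surjective, whence $R$ is the image of the composite $\pr_{1*}\circ i_*$ starting from ${\rm CH}^{*-1}_{\GL_{n+1}\times\GL_{\bfd}}(\widetilde{Z}_{\bfd,n})$. As $\widetilde{Z}_{\bfd,n}$ is an equivariant vector bundle over ${\rm Fl}_{n+1,s}$, homotopy invariance gives $\ch_{\GL_{n+1}\times\GL_{\bfd}}(\widetilde{Z}_{\bfd,n})\simeq\ch_{\GL_{n+1}\times\GL_{\bfd}}({\rm Fl}_{n+1,s})$, a module over $\ch(\cB(\GL_{n+1}\times\GL_{\bfd}))$ generated by monomials $P(\beta_1,b_1,\dots,b_{s-1})$ in the Chern classes of the tautological flag; using the monic degree-$s$ relation for $\beta_1$ and the Schubert bound on the remaining variables (here $\dim{\rm Fl}_{n+1,s}=rs-1$) one restricts, as in the discussion preceding the statement, to the $P$ of degree $<s$ in $\beta_1$ and total degree $\le rs-1$. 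Since $R$ is an ideal and $\pr_{1*}\circ i_*$ is $\ch(\cB(\GL_{n+1}\times\GL_{\bfd}))$-linear, those finitely many $P$ already produce generators of $R$, and the projection formula rewrites each of them as $\pr_{1*}\big([\widetilde{Z}_{\bfd,n}]\cdot\pr_2^*P\big)$.

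Finally I would make these generators explicit. Substituting the formula $[\widetilde{Z}_{\bfd,n}]=\sum_{0\le a_1,\dots,a_r\le s}\gamma_1^{a_1}\cdots\gamma_r^{a_r}\,C_s(a_1,\dots,a_r)$ of \Cref{lm:W}, and noting that each monomial $\gamma_1^{a_1}\cdots\gamma_r^{a_r}$ is pulled back from $V(\bfd,n)$ along $\pr_1$ while each $C_s(a_1,\dots,a_r)$ — being symmetric in $\beta_2,\dots,\beta_s$, hence a polynomial in $\beta_1,b_1,\dots,b_{s-1}$ — as well as $P$ is pulled back from ${\rm Fl}_{n+1,s}$ along $\pr_2$, the projection formula combined with flat base change in the cartesian square formed by $\pr_1,\pr_2,\pi$ yields
\[ \pr_{1*}\big([\widetilde{Z}_{\bfd,n}]\cdot\pr_2^*P\big)=\sum_{0\le a_1,\dots,a_r\le s}\gamma_1^{a_1}\cdots\gamma_r^{a_r}\cdot\pi_*\big(C_s(a_1,\dots,a_r)\,P(\beta_1,b_1,\dots,b_{s-1})\big), \]
which is exactly the asserted form of the generators of $R$. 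The only move with content beyond bookkeeping is this last base-change computation, together with the (already-made) observation that the listed monomials $P$ suffice; I do not expect a real obstacle, since the substantive work is carried by \Cref{prop:presentation}, \Cref{lemma:diagram}, \Cref{lemma:resolution} and \Cref{lm:W}.
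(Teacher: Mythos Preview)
Your proposal is correct and follows essentially the same route as the paper: quotient presentation via \Cref{prop:presentation}, reduction to $\ch_{\GL_{n+1}\times\GL_{\bfd}}(V(\bfd,n)\smallsetminus Z_{\bfd,n})$ via \Cref{lemma:diagram}, description of $R$ through \Cref{lemma:resolution} and the projection formula, and the explicit form via \Cref{lm:W}. The only cosmetic difference is that you spell out the final step as projection formula plus flat base change in the cartesian square, whereas the paper invokes it under the name ``compatibility formula''.
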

\begin{proof}
\Cref{prop:presentation} for $G=\GL_{n+1}$ tells us that $$\cM^{\GL}_n(\bfd)\simeq [(\Gr(r_\ell,\cE_\ell)\smallsetminus S_{\bfd,n})/\GL_{n+1}],$$ hence the Chow ring of the stack on the left is isomorphic to  $\ch_{\GL_{n+1}}(\Gr(r_\ell,\cE_\ell)\smallsetminus S_{\bfd,n})$.
In \ref{sec:other relations} we have seen that
\[ \ch_{\GL_{n+1}}(\Gr(r_\ell,\cE_\ell)\smallsetminus S_{\bfd,n}) \simeq \ch_{\GL_{n+1}\times\GL_{\bfd}}(V(\bfd,n)\smallsetminus Z_{\bfd,n}), \]
and that the term on the right is isomorphic to
\begin{align*}
     \ch_{\GL_{n+1}\times\GL_\bfd}(V(\bfd,n))/R &\simeq \QQ[c_1,\dots,c_{n+1},\{a_{j,k}\}_{j\leq \ell,k\leq r_j}]/R \\
     &\simeq  \QQ[c_1,\dots,c_{n+1},\gamma_1,\dots,\gamma_r]^{\mathfrak{S}_{\bfd}}/R
\end{align*}
where $R$ is the ideal generated by cycles of the form
\[ {\rm pr}_{1*}([\widetilde{Z}_{\bfd,n}]\cdot {\rm pr}_2^*P(\beta_1,b_1,\dots,b_{s-1})). \]
In the formula above $P(\beta_1,b_1,\dots,b_{s-1})$ is any monomial of degree $< s$ in $\beta_1$ and total degree $\leq rs-1$.

\Cref{lm:W} gives us an explicit expression for $[\widetilde{Z}_{\bfd,n}]$. Applying the compatibility formula, we deduce that the generators of $R$ are
\[ \sum_{0\leq a_1,\dots,a_r \leq s} \gamma_1^{a_1}\cdots\gamma_r^{a_r}\cdot \pi_*\left( C_s(a_1,\dots,a_r)P(\beta_1,b_1,\dots,b_{s-1})\right). \]
This concludes the proof.
\end{proof}
\begin{remark}
The relations appearing in \Cref{thm:chow} actually holds in the \emph{integral} Chow ring, but we don't know whether they still generate the integral Chow ring, although this is probably not the case.
\end{remark}
\begin{corollary}\label{cor:chow PGL}
With the same notation as \Cref{thm:chow}, for $G=\SL_{n+1}$ or $\PGL_{n+1}$, we have
\[ \ch(\cM^G_n(\bfd))\simeq \QQ[c_1,\dots,c_{n+1},\gamma_1,\dots,\gamma_r]^{\mathfrak{S}_{\bfd}}/(R,c_1) \]
\end{corollary}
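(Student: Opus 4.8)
The plan is to deduce \Cref{cor:chow PGL} from \Cref{thm:chow} by comparing the three stacks $\cM^{\GL}_n(\bfd)$, $\cM^{\SL}_n(\bfd)$ and $\cM^{\PGL}_n(\bfd)$, which share the same underlying scheme $\Hilb_{\bfd,n}^{\rm sm}$ and differ only in the acting group. For $G=\SL_{n+1}$ I would exploit the determinant $\det\colon\GL_{n+1}\to\gm$, whose kernel is $\SL_{n+1}$: an $\SL_{n+1}$-torsor is the same as a $\GL_{n+1}$-torsor together with a trivialization of its determinant line bundle, so that $\cB\SL_{n+1}\simeq\cB\GL_{n+1}\times_{\cB\gm}\spec k$, the fibre product being over the morphism $\cB\GL_{n+1}\to\cB\gm$ classifying $\det\cV_{n+1}$. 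Pulling back along $\cM^{\GL}_n(\bfd)\to\cB\GL_{n+1}$ identifies $\cM^{\SL}_n(\bfd)\to\cM^{\GL}_n(\bfd)$ with the complement of the zero section in the total space of the line bundle $\det\cV_{n+1}$, where $\cV_{n+1}$ is the universal rank-$(n+1)$ bundle; in particular it is a $\gm$-torsor. Pullback to that total space is an isomorphism of Chow rings by homotopy invariance, and under this isomorphism pushforward along the zero section becomes multiplication by the Euler class $c_1(\det\cV_{n+1})=c_1(\cV_{n+1})=c_1$, so the localization exact sequence gives
\[ \ch(\cM^{\SL}_n(\bfd))\simeq\ch(\cM^{\GL}_n(\bfd))/(c_1). \]
Since $c_1$ is $\mathfrak{S}_{\bfd}$-invariant, combining this with \Cref{thm:chow} produces exactly $\QQ[c_1,\dots,c_{n+1},\gamma_1,\dots,\gamma_r]^{\mathfrak{S}_{\bfd}}/(R,c_1)$.

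For $G=\PGL_{n+1}$ I would use that the $\SL_{n+1}$-action on $\Hilb_{\bfd,n}^{\rm sm}$ factors through $\PGL_{n+1}=\SL_{n+1}/\mmu_{n+1}$, so that $\cM^{\SL}_n(\bfd)\to\cM^{\PGL}_n(\bfd)$ is a gerbe banded by the central finite group scheme $\mmu_{n+1}$; such a gerbe induces an isomorphism of rational Chow rings (a transfer-type argument; cf. \cite{EG} and the stacky formalism recalled in \cite{DL-coh}). Hence $\ch(\cM^{\PGL}_n(\bfd))\simeq\ch(\cM^{\SL}_n(\bfd))$, and the $\SL_{n+1}$ computation above concludes.

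The only non-formal ingredient, and the point I would expect to need the most care, is the last isomorphism: one must justify that the $\mmu_{n+1}$-gerbe is a rational Chow equivalence. In characteristic zero $\mmu_{n+1}$ is \'etale and this is the usual degree-$(n+1)$ transfer; in general $\mmu_{n+1}$ is linearly reductive and the statement still holds with $\QQ$-coefficients. If one preferred to avoid invoking it, one could instead rerun the argument of \Cref{sec:chow} verbatim with $\PGL_{n+1}$ in place of $\GL_{n+1}$, using $\ch(\cB\PGL_{n+1})_{\QQ}\simeq\QQ[c_2,\dots,c_{n+1}]$; everything else---tracking the ideal $(R,c_1)$ and the $\mathfrak{S}_{\bfd}$-invariance of the generators---is routine.
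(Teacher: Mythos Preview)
Your argument is correct. For $\SL_{n+1}$ the $\gm$-torsor/localization trick is valid and gives $\ch(\cM^{\SL}_n(\bfd))\simeq\ch(\cM^{\GL}_n(\bfd))/(c_1)$ directly; for $\PGL_{n+1}$ the $\mmu_{n+1}$-gerbe transfer is exactly the right tool with $\QQ$-coefficients.

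Compared with the paper: for $\SL_{n+1}$ the paper does \emph{not} deduce the result from the $\GL$ case but instead reruns the entire proof of \Cref{thm:chow} with $\SL_{n+1}$ in place of $\GL_{n+1}$, using that $\ch_{\SL_{n+1}}(\spec k)\simeq\QQ[c_2,\dots,c_{n+1}]$ (\Cref{lemma:diagram} is stated for both groups precisely for this purpose). Your $\gm$-torsor approach is more economical, since it leverages the $\GL$ statement already proved rather than redoing the computation. For $\PGL_{n+1}$ the two approaches coincide: the paper invokes \cite{FV}*{Lemma 5.4}, which says the pullback $\ch_{\PGL_{n+1}}(X)\to\ch_{\SL_{n+1}}(X)$ has torsion kernel---this is the same $\mmu_{n+1}$-gerbe comparison you describe.

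One caveat on your final aside: rerunning \Cref{sec:chow} ``verbatim with $\PGL_{n+1}$'' does not work as written, because the key diagram of \Cref{lemma:diagram} requires the group to act linearly on $V(\bfd,n)=\prod_i H^0(\PP^n,\cO(d_i))$, and $\PGL_{n+1}$ has no such linear action on spaces of homogeneous forms. This is why \Cref{lemma:diagram} is stated only for $\GL_{n+1}$ and $\SL_{n+1}$, and why the paper (and your main argument) handles $\PGL_{n+1}$ only by comparison with $\SL_{n+1}$.
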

\begin{proof}
For $G=\SL_{n+1}$, the same argument used to prove \Cref{thm:chow} applies, with the only difference that $\ch_{\SL_{n+1}}(\spec{k})$ is the ring of polynomials in $c_2,\dots,c_{n+1}$. For the case $G=\PGL_{n+1}$, we use \cite{FV}*{Lemma 5.4}, which tells us that for every $\PGL_{n+1}$-scheme $X$, the kernel of the natural pull-back map $\ch_{\PGL_{n+1}}(X)\to\ch_{\SL_{n+1}}(X)$ is torsion, hence zero when the Chow groups are taken with $\QQ$-coefficients.
\end{proof}
\begin{remark}
In contrast with what happens for $G=\GL_{n+1}$ and $\SL_{n+1}$, it is not true that all the relations appearing in \Cref{cor:chow PGL} for $G=\PGL_{n+1}$ hold true in the \emph{integral} Chow ring.
\end{remark}
\subsection{First applications}
At first glance, \Cref{thm:chow} may look too abstract to be useful when it comes to computing explicit descriptions of Chow rings of moduli spaces. In particular, it lacks closed formulas for the generators of the ideal of relations that only involve the values $d_1,\dots,d_r$ and $n$. Nonetheless, once those values are fixed, it is quite easy to compute the relations using localization formulas (see \cite{EG-loc}*{Theorem 2}, \cite{DLFV}*{Remark 2.4}).

Given a cycle $\xi$ in $\ch_T({\rm Fl}_{n+1,s})$, where $T\subset\GL_{n+1}$ is the subtorus of diagonal matrices, the localization formula tells us that the pushforward along $\pi_*:\ch_T({\rm Fl}_{n+1,s})\to\ch_T(\spec{k})$ is equal to
\[ \pi_*\xi = \sum_{p\in {\rm Fl}_{n+1,s}^T} \frac{i_p^*\xi}{c_{{\rm top}}^T(T({\rm Fl}_{n+1,s})_p)} \]
where $i_p:p\hookrightarrow {\rm Fl}_{n+1,s}$ is the inclusion of a $T$-fixed point in the flag variety.

Although the right hand side is a priori only a rational function, the theory behind localization formulas assures us that the term on the right is actually a polynomial belonging to $\ch_T(\spec{k})\simeq\QQ[t_1,\dots,t_{n+1}]$. 

The set of $T$-fixed points of the flag variety is in bijection with the set of pairs $(i_1,I)$ where $1\leq i_1\leq n+1$ and $I=\{i_2,\dots,i_s\}$ is a subset of $\{1,2,\dots,n+1\}\smallsetminus\{i_1\}$ of length $s-1$: to such a pair, we associate the $T$-fixed point in ${\rm Fl}_{n+1,s}$ given by the flag
\[ \left(V_{i_1}\subset V_I \subset \AA^{n+1}\right)=\langle e_{i_1} \rangle \subset \langle e_{i_1},e_{i_2},\dots,e_{i_s}\rangle \subset \AA^{n+1} \]
where $e_1,\dots,e_{n+1}$ is the standard basis of $\AA^{n+1}$ as a vector space.

We can regard the flag variety as the projectivization of the tautological bundle over $\Gr(s,n+1)$. In this way we see that the tangent space of the fixed point associated to $(i_1,I)$ is 
\[\hom(V_{i_1},V_I/V_{i_1})\oplus\hom(V_I,\AA^{n+1}/V_I)\simeq \langle e_1^{\vee}\otimes e_{i_2},\dots,e_1^{\vee}\otimes e_{i_s}\rangle \oplus\langle \dots, e_{i_j}^{\vee}\otimes e_{i_k},\dots \rangle \]
where $i_j\in I\cup \{i_1\}$ and $i_k$ belongs to the complement of $I\cup\{i_1 \}$ in $\{1,\dots,n+1\}$. As $T$ acts on $e_m$ via multiplication by the character $t_m$, we deduce that
\[ c_{{\rm top}}^T(T({\rm Fl}_{n+1,s})_{(i_1,I)})=\prod_{i_m\in I} (t_{i_m}-t_{i_1}) \prod_{\substack{i_j\in I\cup \{i_1\} \\ i_k \in (I\cup\{i_1\})^c}} (t_{i_k}-t_{i_j}). \]
Recall that the $\beta_j$ for $j=1,\dots,s$ are the Chern roots of the dual of the tautological bundle on $\Gr(s,n+1)$. Therefore, their restriction to the $T$-equivariant Chow ring of the fixed point associated to $(i_1,I)$ are exactly the Chern roots of the dual of $\langle e_{i_1},\dots,e_{i_s}\rangle$. This implies that if $q(\beta_1,\dots,\beta_s)$ is a polynomial in the $\beta_j$, we have $i_p^*q(\beta_1,\dots,\beta_s)=q(-t_{i_1},\dots,-t_{i_s})$. Putting all together, we get
\begin{equation}\label{eq:loc}\pi_*q(\beta_1,\dots,\beta_s) = \sum_{(i_1,I)} \frac{q(-t_{i_1},\dots,-t_{i_s})}{\prod (t_{i_m}-t_{i_1}) \prod(t_{i_k}-t_{i_j})}. \end{equation}
This formula can be used to compute the coefficients $\pi_*(C_s(a_1,\dots,a_r)P(\beta_1,b_1,\dots,b_{s-1}))$ appearing in \Cref{thm:chow}.
\subsubsection{Moduli of curves of genus five}
In \cite{PV} the authors computed the Chow ring of moduli spaces of triple covers of $\PP^1$. In particular, their result shows that $\ch(H_{3,5})\simeq \QQ[\lambda_1]/(\lambda_1^3)$, where $H_{3,5}$ is the moduli space of trigonal curves of genus five and $\lambda_1$ is the class of the Hodge line bundle.

The complement of $H_{3,5}$ in the moduli space $M_{5}$ of smooth curves of genus five is isomorphic to the coarse moduli space of $\cM^{\PGL}_4(2,2,2)$ (see \ref{rmk:examples}), whose Chow ring we can compute using \Cref{thm:chow}. This enables us to reprove the following Theorem of Izadi (see \cite{Iza}).
\begin{proposition}[Izadi]\label{prop:chow M5}
Let $M_5$ be the moduli space of smooth curves of genus five. Then
\[ \ch(M_5)\simeq \QQ[\lambda_1]/(\lambda_1^4) \]
where $\lambda_1$ is the first Chern class of the Hodge line bundle.
\end{proposition}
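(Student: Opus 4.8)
The plan is to compute $\ch(\mpgl{4}{2,2,2})$ by specializing \Cref{thm:chow}, and then to splice this into the excision sequence for the trigonal locus $H_{3,5}\subset M_5$. First I would set up the presentation: here $n=4$, $\bfd=(2,2,2)$, so $r=3$, $\ell=1$, $r_1=3$, $s=n-r+2=3$, and $\mathfrak{S}_{\bfd}=\mathfrak{S}_3$ permutes $\gamma_1,\gamma_2,\gamma_3$. By \Cref{cor:chow PGL},
\[ \ch(\mpgl{4}{2,2,2})\simeq\QQ[c_1,\dots,c_5,\gamma_1,\gamma_2,\gamma_3]^{\mathfrak{S}_3}/(R,c_1), \]
and $\QQ[\gamma_1,\gamma_2,\gamma_3]^{\mathfrak{S}_3}$ is the polynomial ring on the three elementary symmetric functions $\sigma_1(\gamma),\sigma_2(\gamma),\sigma_3(\gamma)$. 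Because each $d_i-1$ equals $1$, the class $C_3(a_1,a_2,a_3)$ of \Cref{lm:W} is $\prod_{i=1}^{3}\sigma_{3-a_i}(2\beta_1,\beta_1+\beta_2,\beta_1+\beta_3)$, and $R$ is generated by the finitely many cycles of \Cref{thm:chow}, namely $\sum_{0\le a_i\le 3}\gamma_1^{a_1}\gamma_2^{a_2}\gamma_3^{a_3}\,\pi_*\bigl(C_3(a_1,a_2,a_3)P(\beta_1,b_1,b_2)\bigr)$ with $P$ a monomial of $\beta_1$-degree $<3$ and total degree $\le rs-1=8$.

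Next I would compute these relations explicitly. Each $\pi_*$ is an integral along $\pi\colon{\rm Fl}_{5,3}\to\spec k$, evaluated by the localization formula \eqref{eq:loc} as a sum over the $5\binom{4}{2}=30$ torus-fixed flags, and then re-expressed in the $c_i$. This is a finite but sizeable computation, best run on a computer algebra system; the expected output is that, after imposing $c_1=0$, the ideal $R'$ generated by these relations is the irrelevant ideal $(c_2,c_3,c_4,c_5,\sigma_1(\gamma),\sigma_2(\gamma),\sigma_3(\gamma))$. For instance, the $P=1$ relation contributes, for degree reasons, only terms with $a_1+a_2+a_3\le 1$, and modulo $c_1$ it collapses to a nonzero multiple of $\sigma_1(\gamma)$; the relations with $\deg P>0$ then kill $c_2,\dots,c_5,\sigma_2(\gamma),\sigma_3(\gamma)$. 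The conclusion of this step is $\ch(\mpgl{4}{2,2,2})\simeq\QQ$.

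Finally I would glue. As recalled before the statement, $M_5\smallsetminus H_{3,5}$ is the coarse moduli space of $\mpgl{4}{2,2,2}$, which is a Deligne--Mumford stack, so (rational Chow rings of such a stack and its coarse space coinciding) $\ch(M_5\smallsetminus H_{3,5})\simeq\QQ$. The localization sequence $\ch[*-1](H_{3,5})\xrightarrow{i_*}\ch(M_5)\to\ch(M_5\smallsetminus H_{3,5})\to0$ then shows $i_*$ is surjective in positive degrees; since $\ch(H_{3,5})\simeq\QQ[\lambda_1]/(\lambda_1^3)$ (\cite{PV}) vanishes in degrees $\ge3$, we get $\ch[k](M_5)=0$ for $k\ge4$, while for $1\le k\le3$ the projection formula gives $\ch[k](M_5)=\QQ\cdot i_*\bigl((\lambda_1|_{H_{3,5}})^{k-1}\bigr)=\QQ\cdot[H_{3,5}]\,\lambda_1^{k-1}$. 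Because $[H_{3,5}]$ is a nonzero multiple of $\lambda_1$ (as one sees from the known class of the trigonal divisor, or from $[H_{3,5}]\ne0$ together with $\Pic(M_5)_{\QQ}=\QQ\lambda_1$), this yields $\ch[k](M_5)=\QQ\lambda_1^k$ for $k\le3$, hence $\ch(M_5)\simeq\QQ[\lambda_1]/(\lambda_1^4)$ once we know $\lambda_1^3\ne0$.

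I expect the main obstacle to be the middle step: organizing and carrying out the localization computation of \Cref{thm:chow} for $(n,\bfd)=(4,(2,2,2))$ and certifying that the resulting relations cut the seven-variable polynomial ring down to $\QQ$. Everything downstream of $\ch(\mpgl{4}{2,2,2})\simeq\QQ$ is formal bookkeeping with the excision sequence; the only ingredient not produced by the method of this paper is the non-vanishing $\lambda_1^3\ne0$ in $\ch[3](M_5)$ (it spans the one-dimensional socle of the tautological ring of $M_5$), without which one would only conclude that $\ch(M_5)$ is a quotient of $\QQ[\lambda_1]/(\lambda_1^4)$.
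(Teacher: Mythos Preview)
Your proposal is correct and follows essentially the same route as the paper: you specialize \Cref{thm:chow} and \Cref{cor:chow PGL} to $(n,\bfd)=(4,(2,2,2))$, compute the relations via the localization formula \eqref{eq:loc} (the paper does this with Mathematica, obtaining $40a_{1,1}=0$ in degree one and then successively killing $a_{1,2},c_2,a_{1,3},c_3,c_4,c_5$) to conclude $\ch(\mpgl{4}{2,2,2})\simeq\QQ$, and then combine the excision sequence for $H_{3,5}\subset M_5$ with $\ch(H_{3,5})\simeq\QQ[\lambda_1]/(\lambda_1^3)$ from \cite{PV} and the nonvanishing $\lambda_1^3\neq 0$ (the paper cites \cite{Fab-conj}*{Theorem 2} for this). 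Your gluing step is slightly more explicit than the paper's, but the argument and the external inputs are the same.
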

\begin{proof}
\Cref{thm:chow} tells us that
\[\ch(\cM^{\PGL}_4(2,2,2)) \simeq \QQ[a_{1,1},a_{1,2},a_{1,3},c_2,\dots,c_5]/I. \]
In what follows, as generators of the flag variety we use $\beta_1$ and $\sigma_m=\sigma_m(\beta_1,\dots,\beta_s)$ instead of $\beta_1$ and $b_m$.
We use the localization formula (\ref{eq:loc}) to explicitly compute some relations in $I$. These computations are carried out using Mathematica.

In degree one, we have the relation given by ${\rm pr}_{1*}([\widetilde{Z}_{(2,2,2),4}])$ (we already set $c_1=0)$). This gives the relation $40a_{1,1}=0$. In degree two, we have two relations given by ${\rm pr}_{1*}([\widetilde{Z}_{(2,2,2),4}]\cdot{\rm pr}_2^*\beta_1)$ and ${\rm pr}_{1*}([\widetilde{Z}_{(2,2,2),4}]\cdot{\rm pr}_2^*\sigma_1)$. These turn out to be
\begin{align*}
    &48 c_1^2 - 24 c_2 - 56 c_1a_{1,1} + 20 a_{1,1}^2 \\
    &112 c_1^2 - 40 c_2 - 136 c_1 a_{1,1} + 40 a_{1,1}^2 + 20 a_{1,2}
\end{align*}
from which we deduce that $a_{1,2}=c_2=0$. In degree three, we compute ${\rm pr}_{1*}([\widetilde{Z}_{(2,2,2),4}]\cdot{\rm pr}_2^*\xi)$ for $\xi=\sigma_2$, $\beta_1^2$. These are
\begin{align*}
    &-112 c_1^3 + 88 c_1 c_2 - 24 c_3 + (184 c_1^2 - 48 c_2) a_{1,1} \\
    &- 96 c_1 a_{1,1}^2 + 
 12 a_{1,1}^3 - 56 c_1 a_{1,2} + 46 a_{1,1} a_{1,2} - 18 a_{1,3} =0, \\
 &48 c_1^3 + 72 c_1 c_2 - 32 c_3 + (56 c_1^2 - 36 c_2) a_{1,1}\\
 &- 24 c_1 a_{1,1}^2 + 4 a_{1,1}^3 - 4 c_1 a_{1,2} + 2 a_{1,1} a_{1,2} + 4 a_{1,3} = 0.
\end{align*}
Combined with the previous relations, we deduce that $c_3=a_{1,3}=0$.
Next we compute $\pi_*(C_3(0,0,0)\cdot \beta_1^j)$ for $j=3$ and $4$:
\begin{align*}
    &48 c_1^4 - 120 c_1^2 c_2 + 24 c_2^2 + 80 c_1 c_3 - 32 c_4 =0 \\
    &-48 c_1^5 + 168 c_1^3 c_2 - 96 c_1 c_2^2 - 128 c_1^2 c_3 + 56 c_2 c_3 + 
 80 c_1 c_4 - 32 c_5=0.
\end{align*}
As we already know that all the other terms are zero, the first relation implies that $c_4=0$, and similarly the second one implies $c_5=0$. We have proved that $\ch(M_5\smallsetminus H_{3,5})\simeq\QQ$.

We know from \cite{Fab-conj}*{Theorem 2} that $\lambda_1^3\neq 0$ in $\ch(M_5)$. These two facts, combined with the computation of \cite{PV} and the exactness of the localization sequence
\[ \QQ[\lambda_1]/(\lambda_1^3)\simeq\ch(H_{3,5})\longrightarrow\ch(M_5)\longrightarrow\ch(M_5\smallsetminus H_{3,5})\longrightarrow 0 \]
easily implies that $\ch(M_5)\simeq\QQ[\lambda_1]/(\lambda_1^4)$.
\end{proof}
\subsubsection{Moduli of polarized K3 surfaces of degree eight}
Let $K_{8}$ be the moduli space of polarized K3 surfaces of degree eight. There is an open subvariety $U_8\subset K_8$ whose points correspond to polarized K3 surfaces $(X,[L])$ such that $L$ is very ample, and $X$ does not contain any curve of arithmetic genus $1$ and degree $3$. The complement of $U_8$ in $K_8$ is the union of three Noether-Lefschetz divisors, namely $D_{1,1}$, $D_{2,1}$ and $D_{3,1}$, where points in $D_{d,1}$ correspond to polarized K3 surfaces containing a curve of arithmetic genus $1$ and degree $d$.

As observed in \Cref{rmk:examples}, if $(X,[L])$ is a point of $U_8$, then the polarization $L$ embeds $X$ in $\PP^5$ as a complete intersection of three quadrics: in other words, the scheme $U_8$ is isomorphic to the coarse moduli space of $\cM^{\PGL}_5(2,2,2)$, hence we can use \Cref{thm:chow} and \Cref{cor:chow PGL} to compute the Chow ring of $U_8$.
\begin{proposition}\label{prop:chow U8}
We have $\ch(U_8)\simeq \QQ$, hence the pushorward morphism
\[ {\rm CH}^{i-1}(\cup_{d=1}^{3} D_{d,1})\longrightarrow{\rm CH}^i(K_8) \]
from the union of these Noether-Lefschetz divisors is surjective in degree $i>0$.
\end{proposition}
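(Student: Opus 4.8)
The plan is to run exactly the same computation as in the proof of \Cref{prop:chow M5}, now with the parameters $n=5$, $r=3$ and $\bfd=(2,2,2)$, so that $s=n-r+2=4$, $\ell=1$, $r_1=3$ and $\mathfrak{S}_{\bfd}=\mathfrak{S}_3$. Since $U_8$ is the coarse moduli space of $\mpgl{5}{2,2,2}$ (\Cref{rmk:examples}) and rational Chow groups of a smooth Deligne--Mumford stack coincide with those of its coarse space, it suffices to prove that the ring produced by \Cref{cor:chow PGL},
\[ \ch(\mpgl{5}{2,2,2})\simeq \QQ[a_{1,1},a_{1,2},a_{1,3},c_2,\dots,c_6]/(R,c_1), \]
is trivial, i.e. that all eight generators $a_{1,1},a_{1,2},a_{1,3},c_2,\dots,c_6$ lie in the ideal $(R,c_1)$. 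Here $a_{1,k}$ is the $k$-th elementary symmetric function in $\gamma_1,\gamma_2,\gamma_3$, and $R$ is generated by the cycles $\sum \gamma_1^{a_1}\gamma_2^{a_2}\gamma_3^{a_3}\,\pi_*(C_4(a_1,a_2,a_3)P(\beta_1,b_1,b_2,b_3))$ of \Cref{thm:chow}.

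First I would impose $c_1=0$ and evaluate these generators degree by degree by means of the localization formula \eqref{eq:loc} on $\mathrm{Fl}_{6,4}$, a computation to be carried out with a computer algebra system exactly as in \Cref{prop:chow M5}. In degree $1$ the only generator is $\pr_{1*}[\widetilde{Z}_{\bfd,n}]=[Z_{\bfd,n}]$ (\Cref{lemma:resolution}), which modulo $c_1$ is a nonzero integer multiple of $a_{1,1}$; hence $a_{1,1}=0$. In degree $2$ the two generators coming from $P=\beta_1$ and $P=\sigma_1$, once the $c_1$- and $a_{1,1}$-terms already known to vanish are discarded, are two independent linear combinations of $c_2$ and $a_{1,2}$, so $c_2=a_{1,2}=0$. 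In degree $3$ the generators coming from $P=\beta_1^2$ and $P=\sigma_2$ likewise reduce to independent combinations of $c_3$ and $a_{1,3}$, giving $c_3=a_{1,3}=0$. At this point $\QQ[\gamma_1,\gamma_2,\gamma_3]^{\mathfrak{S}_3}$ has collapsed and the only surviving generators are $c_4,c_5,c_6$; to kill them I would use the relations $\pi_*\bigl(C_4(0,0,0)\,\xi\bigr)$ with $\xi$ a power of $\beta_1$ of degree $3,4,5$ respectively (high powers of $\beta_1$ being rewritten, via the relation of the flag bundle, in terms of admissible $P$'s and of the $c_i$'s, so that these cycles genuinely belong to $R$). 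Modulo all the generators already eliminated these become nonzero multiples of $c_4$, $c_5$, $c_6$, exactly as the relations $\pi_*(C_3(0,0,0)\beta_1^j)$ behave in the genus five computation. This gives $\ch(U_8)\simeq\ch(\mpgl{5}{2,2,2})\simeq\QQ$.

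The surjectivity claim then follows formally: the open immersion $U_8\hookrightarrow K_8$ with closed complement $D_{1,1}\cup D_{2,1}\cup D_{3,1}$ yields, for every $i>0$, an exact localization sequence
\[ {\rm CH}^{i-1}(D_{1,1}\cup D_{2,1}\cup D_{3,1})\longrightarrow {\rm CH}^i(K_8)\longrightarrow {\rm CH}^i(U_8)\longrightarrow 0, \]
and ${\rm CH}^i(U_8)=0$ because $\ch(U_8)\simeq\QQ$; hence the left-hand map is surjective.

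The main obstacle is, as in \Cref{prop:chow M5}, of a computational and bookkeeping nature: one must confirm that finitely many relation generators of \Cref{thm:chow} in degrees $\le 6$ already do the job, that at each degree the previously eliminated generators really collapse the new relations to the shape described above, and --- crucially --- that the relevant leading coefficients (the multiple of $a_{1,1}$ in degree $1$, and those of $c_4,c_5,c_6$ in degrees $4,5,6$) are all nonzero, so that the corresponding generators can indeed be eliminated over $\QQ$. These verifications are carried out directly through the localization formula \eqref{eq:loc}.
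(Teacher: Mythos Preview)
Your proposal is correct and follows essentially the same approach as the paper's own proof: apply \Cref{cor:chow PGL}, then eliminate the generators degree by degree using the relations of \Cref{thm:chow} computed via the localization formula \eqref{eq:loc}. The only cosmetic difference is in the choice of monomials for the last two steps: the paper uses $\xi=\beta_1\sigma_3$ and $\xi=\beta_1^2\sigma_3$ to kill $c_5$ and $c_6$, whereas you propose pure powers $\beta_1^4$ and $\beta_1^5$ (rewritten via the flag-variety relation, as you correctly note); either choice works once one checks the relevant coefficient is nonzero.
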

\begin{proof}
In what follows, we adopt the same notation used in the proof of \Cref{prop:chow M5}. We know from \Cref{cor:chow PGL} that
\[ \ch(U_8)\simeq \QQ[a_{1,1},a_{1,2},a_{1,3},c_1,c_2,\dots,c_6]/(c_1,I). \]
In degree one we have the single relation given by $\pr_{1*}[\widetilde{Z}_{(2,2,2),5}]$: this is equal to $80a_{1,1}$, hence $a_{1,1}=0$ in the rational Chow ring. 

Degree two relations are given by $\pr_{1*}([\widetilde{Z}_{(2,2,2),5}]\cdot\xi)$ for $\xi=\beta_1$ or $\sigma_1$, and usual computations with localization formulas give explicit expressions for these relations:
\begin{align*}
    &80 c_1^2 - 32 c_2 - 120 c_1 a_{1,1} + 60 a_{1,1}^2 - 20 a_{1,2}=0\\
    &200 c_1^2 - 56 c_2 - 300 c_1 a_{1,1} + 120 a_{1,1}^2 + 10a_{1,2}=0.
\end{align*}
Together, they imply that $a_{1,2}=c_2=0$. Next, we compute the degree three relations given by $\pr_{1*}([\widetilde{Z}_{(2,2,2),5}]\cdot\xi)$ for $\xi=\beta_1^2$ and $\sigma_2$. They are
\begin{align*}
    &-80 c_1^3 + 112 c_1 c_2 - 56 c_3 + (120 c_1^2 - 64 c_2) a_{1,1} \\
    &- 80 c_1 a_{1,1}^2 +  24 a_{1,1}^3 + 20 c_1 a_{1,2} - 18 a_{1,1} a_{1,2} + 14 a_{1,3}=0 \\
    &-240 c_1^3 + 168 c_1 c_2 - 72 c_3 + (480 c_1^2 - 92 c_2) a_{1,1}\\
    &- 340 c_1 a_{1,1}^2 + 72 a_{1,1}^3 - 80 c_1 a_{1,2} + 96 a_{1,1} a_{1,2} - 63 a_{1,3}=0.
\end{align*}
These relations, combined with the previous ones, show that $a_{1,3}=c_3=0$ in the Chow ring. To show that $c_4=0$, it is enough to prove that $\pi_*(C_4(0,0,0)\cdot\beta_1^3)$ is not zero, because we already know that all the other terms appearing in the relation given by ${\rm pr}_{1*}([\widetilde{Z}_{(2,2,2),5}]\cdot \beta_1^3)$ are zero. After straightforward computations we get
\[ 80 c_1^4 - 192 c_1^2 c_2 + 32 c_2^2 + 136 c_1 c_3 - 48 c_4=0, \]
hence $c_4=0$. In the same way, the coefficient $\pi_*(C_4(0,0,0)\cdot\beta_1\sigma_3)$ turns out to be
\[ -200 c_1^5 + 512 c_1^3 c_2 - 136 c_1 c_2^2 - 464 c_1^2 c_3 + 64 c_2 c_3 + 
 272 c_1 c_4 - 48 c_5=0, \]
hence $c_5=0$. Finally, we compute $\pi_*(C_4(0,0,0)\cdot\beta_1^2\sigma_3)$ and we get
\begin{align*}
    &200 c_1^6 - 712 c_1^4 c_2 + 448 c_1^2 c_2^2 - 24 c_2^3 + 664 c_1^3 c_3 - 440 c_1 c_2 c_3\\
    &+ 88 c_3^2 - 472 c_1^2 c_4+ 96 c_2 c_4 + 200 c_1 c_5 - 48 c_6=0
\end{align*}
which implies $c_6=0$ and concludes the proof.
\end{proof}
\section{Integral Picard groups}\label{sec:pic}
In \cite{Ben-deg} Benoist gives beautiful formulas for the multidegree of the divisor $S_{\bfd,n}$ of singular complete intersections in the Hilbert scheme $\hilb_{\bfd,n}\simeq\Gr(r_{\ell},\cE_{\ell})$. This is equivalent to compute the integral Picard group of $\cM^{\SL}_n(\bfd)$. In this Section we leverage \Cref{thm:chow} to compute $\Pic(\cM^{\SL}_n(\bfd))$, thus giving a new proof of Benoist's formula.

\Cref{thm:pic pgl} gives a presentation for the integral Picard group of $\cM^{\PGL}_n(\bfd)$, the stack of smooth complete intersections in $\PP^n$. This result, specialized to the case $\bfd=(2,2)$, recovers \cite{AI}*{Theorem 1.1} (see \Cref{cor:pic ddd}).

Let us recall some notation from \Cref{thm:chow}: given $\bfd=(d_1,\dots,d_r)$, there are integers $r_1,\dots,r_\ell$ such that
\begin{align*}
    & d_1=\cdots=d_{r_1},\\
    & d_{r_1+1}=\cdots=d_{r_1+r_2},\\
    & \vdots \\
    & d_{r_1+r_2+\cdots + r_{\ell-1}+1} = \cdots = d_{r_1+\cdots +r_\ell}.
\end{align*}
Given symbols $\gamma_1,\dots,\gamma_r$, we can subdivide them into $\ell$ subsets of the form $S_j=\{\gamma_{r_1+\cdots r_{j-1}+1},\dots,\gamma_{r_1+\cdots+r_j}\}$. The symmetric group $\mathfrak{S}_{r_j}$ acts on this subset, and we denote $a_{j,k}$ for $k=1,\dots,r_j$ the elementary symmetric functions with variables in the set $S_j$.

If we assume that the base field $k$ has characteristic $\neq 2$ or that $n$ is odd, we have that the pushforward of $[\widetilde{Z}_{\bfd,n}]$ is equal to the fundamental class of $Z_{\bfd,n}$; otherwise, it is two times the fundamental class (see \cite{Ben-deg}*{Proof of Proposition 4.2} and the references contained therein). Then the following is a straightforward consequence of \Cref{thm:chow}.
\begin{proposition}\label{prop:rel Pic}
Suppose that the base field $k$ has characteristic $\neq 2$ or that $n$ is odd. Then the integral Picard group of $\cM^{\SL}_n(\bfd)$ is generated by the set $\{a_{1,1},\dots,a_{\ell,1}\}$ modulo the single relation
\begin{align*}
    \sum_{i=1}^{r} \gamma_{i}\cdot \pi_* [ & \sigma_{s-1}((d_i-1)\beta_1+\beta_1,\dots,(d_i-1)\beta_1+\beta_s)\\
    &\cdot\prod_{j=1,j\neq i}^{r} \sigma_{s}((d_j-1)\beta_1+\beta_1,\dots,(d_j-1)\beta_1+\beta_s) ] =0
\end{align*}
where $\pi_*:\ch_{\PGL_{n+1}}({\rm Fl}_{n+1,s})\to\ch_{\PGL_{n+1}}(\spec{k})$ is the pushforward morphism.
\end{proposition}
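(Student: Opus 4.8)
The plan is to read the statement off \Cref{thm:chow} and its proof in cohomological degree one, carrying $\ZZ$-coefficients throughout. First I would note that $\cM^{\SL}_n(\bfd)=[\Hilb_{\bfd,n}^{\rm sm}/\SL_{n+1}]$ is a smooth quotient stack, so $\Pic(\cM^{\SL}_n(\bfd))$ is the degree-one part $\ch[1]_{\SL_{n+1}}(\Hilb_{\bfd,n}^{\rm sm})$ of its integral Chow ring. The tower of $\GL_\bfd$-torsors and affine bundles connecting $\Hilb_{\bfd,n}^{\rm sm}$ to $V(\bfd,n)\smallsetminus Z_{\bfd,n}$ (combine \Cref{prop:presentation} with \Cref{lemma:diagram}) induces isomorphisms of integral equivariant Chow rings in every degree --- homotopy invariance for the affine bundles and the tautological comparison of quotient stacks for the torsors --- hence $\Pic(\cM^{\SL}_n(\bfd))\simeq\ch[1]_{\SL_{n+1}\times\GL_\bfd}(V(\bfd,n)\smallsetminus Z_{\bfd,n})$.

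Next I would identify the ambient group and pass to the quotient. Since $V(\bfd,n)$ is a linear representation, $\ch_{\SL_{n+1}\times\GL_\bfd}(V(\bfd,n))\simeq\ch(\cB\SL_{n+1})\otimes\ch(\cB\GL_\bfd)$; here $\ch[1](\cB\SL_{n+1})=0$ and $\ch[1](\cB\GL_\bfd)=\bigoplus_{j=1}^{\ell}\ZZ\,a_{j,1}$, so $\ch[1]_{\SL_{n+1}\times\GL_\bfd}(V(\bfd,n))$ is free on $a_{1,1},\dots,a_{\ell,1}$. The integral, $(\SL_{n+1}\times\GL_\bfd)$-equivariant localization sequence for the divisor $Z_{\bfd,n}\hookrightarrow V(\bfd,n)$ reads
\[ \ch[0](Z_{\bfd,n})\longrightarrow\ch[1](V(\bfd,n))\longrightarrow\ch[1](V(\bfd,n)\smallsetminus Z_{\bfd,n})\longrightarrow 0, \]
and $Z_{\bfd,n}$ is irreducible, being the surjective image (\Cref{lemma:resolution}) of the vector bundle $\widetilde{Z}_{\bfd,n}$ over the irreducible flag variety ${\rm Fl}_{n+1,s}$; hence $\ch[0](Z_{\bfd,n})=\ZZ\cdot[Z_{\bfd,n}]$ and $\Pic(\cM^{\SL}_n(\bfd))$ is $\langle a_{1,1},\dots,a_{\ell,1}\rangle$ modulo the single class $[Z_{\bfd,n}]$.

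It then remains to compute $[Z_{\bfd,n}]$. Under the hypothesis on $k$ and $n$ one has $[Z_{\bfd,n}]=\pr_{1*}[\widetilde{Z}_{\bfd,n}]$ (without it one would only obtain $2[Z_{\bfd,n}]$, which cannot be halved integrally --- this is the sole role of the hypothesis). Pushing forward along $\pr_1$ is $\pi_*\colon\ch_{\SL_{n+1}}({\rm Fl}_{n+1,s})\to\ch_{\SL_{n+1}}(\spec k)$ applied coefficientwise in the $\gamma_i$, so \Cref{lm:W} gives $[Z_{\bfd,n}]=\sum_{0\le a_1,\dots,a_r\le s}\gamma_1^{a_1}\cdots\gamma_r^{a_r}\,\pi_*\big(C_s(a_1,\dots,a_r)\big)$, and a degree count finishes the argument: $C_s(a_1,\dots,a_r)$ is homogeneous of degree $rs-\sum_i a_i$ while $\dim{\rm Fl}_{n+1,s}=rs-1$, so $\pi_*\big(C_s(a_1,\dots,a_r)\big)$ has degree $1-\sum_i a_i$; thus it vanishes when $\sum_i a_i\ge 2$, it lies in $\ch[1]_{\SL_{n+1}}(\spec k)=0$ when all $a_i=0$, and for $\sum_i a_i=1$ the $i$-th summand equals
\[ \gamma_i\,\pi_*\!\Big(\sigma_{s-1}\big((d_i-1)\beta_1+\beta_1,\dots,(d_i-1)\beta_1+\beta_s\big)\prod_{j\ne i}\sigma_s\big((d_j-1)\beta_1+\beta_1,\dots,(d_j-1)\beta_1+\beta_s\big)\Big). \]
Summing over $i$ yields the asserted relation; the coefficients $\pi_*(\cdots)$ above are degree-zero classes (integers), so $\pi_*$ may equally be taken $\PGL_{n+1}$-equivariantly as in the statement, and since $C_s$ is symmetric in the $\gamma$'s within each block the relation genuinely lies in $\langle a_{1,1},\dots,a_{\ell,1}\rangle$. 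The only real point of care is to run every step --- the localization sequence, the homotopy invariance used in the reduction, and the equality $[Z_{\bfd,n}]=\pr_{1*}[\widetilde{Z}_{\bfd,n}]$ --- with integral rather than rational coefficients, which is precisely what the hypothesis on $k$ and $n$ guarantees.
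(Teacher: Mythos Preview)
Your proof is correct and follows precisely the approach of the paper, which merely states that the proposition is ``a straightforward consequence of \Cref{thm:chow}'' together with the observation that under the hypothesis on $k$ and $n$ one has $\pr_{1*}[\widetilde{Z}_{\bfd,n}]=[Z_{\bfd,n}]$ integrally. You have simply unpacked this: the reduction via \Cref{lemma:diagram}, the integral localization sequence, the irreducibility of $Z_{\bfd,n}$, and the degree count isolating the $\sum a_i=1$ terms are exactly the implicit steps behind the paper's one-line justification.
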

From now on, we will write $e_i:=d_i-1$. Let us compute more explicitly the coefficient in front of $\gamma_i$ in the relation appearing in \Cref{prop:rel Pic}. First observe that
\begin{align*}
    \sigma_s(e_j\beta_1+\beta_1,e_j\beta_1+\beta_2,\dots,e_j\beta_1+\beta_s)&=\prod_{h=1}^{s} (e_j\beta_1+\beta_h)\\
    &=\sum_{k_j=0}^{s} (e_j\beta_1)^{k_j}\sigma_{s-k_j}(\beta_1,\dots,\beta_s)
\end{align*}
and that
\begin{align*}
    \sigma_{s-1}(e_i\beta_1+\beta_1,e_i\beta_1+\beta_2,\dots,e_i\beta_1+\beta_s)&=\sum_{h'=1}^{r}\left(\prod_{h=1,h\neq h'}^{r}(e_i\beta_1+\beta_h) \right)\\
    &=\sum_{k_1=0}^{s-1}(k_1+1)(e_i\beta_1)^{k_1}\sigma_{s-1-k_1}(\beta_1,\dots,\beta_s).
\end{align*}
We deduce that the coefficient in front of $\gamma_i$ can be rewritten as
\begin{equation}\label{eq:coefficient 1}
     \sum_{k_i\in [s-1], k_j\in [s]} (k_i+1)e_1^{k_1}\cdots e_r^{k_r}\cdot \pi_*\left(\beta_1^{\sum k_j}(\sigma_{s-k_1}\sigma_{s-k_2}\cdots\sigma_{s-1-k_i}\cdots\sigma_{s-k_r})\right).
\end{equation} 
A priori, in the formula above we should sum over all the possible values of $k_j$, but it turns out that many terms are zero, as the next Lemma states.
\begin{lemma}\label{lm:some terms are zero}
The terms in the sum of (\ref{eq:coefficient 1}) are zero for $\sum_{j=1}^{r} k_j<s-1$ and $\sum_{j=1}^{r} k_j>n$.
\end{lemma}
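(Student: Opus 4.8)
The plan is to factor the pushforward $\pi_*$ through the Grassmannian and reduce each term of \eqref{eq:coefficient 1} to the pushforward of a power of $\beta_1$ along a projective bundle. Write $\rho\colon{\rm Fl}_{n+1,s}\to\Gr(s,n+1)$ for the structure map that exhibits ${\rm Fl}_{n+1,s}$ as the $\PP^{s-1}$-bundle of sub-line-bundles $\cL$ of the tautological rank-$s$ subbundle $\cF\subset\cO^{\oplus n+1}$ (pulled back from $\Gr(s,n+1)$, with $\beta_1=c_1(\cL)$), and let $p\colon\Gr(s,n+1)\to\spec{k}$, so that $\pi=p\circ\rho$; thus on each fibre $\PP^{s-1}$ of $\rho$ the class $\beta_1$ restricts to $\pm$ the hyperplane class. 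Set $\cQ:=\cO^{\oplus n+1}/\cF$, a vector bundle on $\Gr(s,n+1)$ of rank $n+1-s=r-1$. The key observation is that every factor $\sigma_m(\beta_1,\dots,\beta_s)=c_m(\cF^\vee)$ occurring in \eqref{eq:coefficient 1} is pulled back along $\rho$, so by the projection formula the term of \eqref{eq:coefficient 1} indexed by $(k_1,\dots,k_r)$ equals, up to a nonzero scalar,
\[
p_*\left(\rho_*(\beta_1^{K})\cdot c_{s-1-k_i}(\cF^\vee)\prod_{j\neq i}c_{s-k_j}(\cF^\vee)\right),\qquad K:=\sum_{j=1}^{r}k_j.
\]

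It thus suffices to prove that $\rho_*(\beta_1^{K})=0$ whenever $K<s-1$ or $K>n$, which I would extract from the projective bundle formula. Since $\beta_1$ restricts to $\pm$ the hyperplane class on each fibre $\PP^{s-1}$ of $\rho$, the class $\rho_*(\beta_1^{K})$ has codimension $K-s+1$ on $\Gr(s,n+1)$ and therefore vanishes for $K<s-1$; this settles the first case. For $K\geq s-1$, the projective bundle formula expresses $\rho_*(\beta_1^{K})$, up to sign, as the degree-$(K-s+1)$ part of the inverse total Chern class $c(\cF)^{-1}$; and the Whitney formula applied to $0\to\cF\to\cO^{\oplus n+1}\to\cQ\to0$ gives $c(\cF)^{-1}=c(\cQ)$, whose components all have degree $\leq\rk\cQ=r-1$. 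Hence $\rho_*(\beta_1^{K})=0$ as soon as $K-s+1>r-1$, i.e.\ for $K>r+s-2=n$ (recall $s=n-r+2$). In both excluded ranges the displayed pushforward vanishes, so the Lemma follows.

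I do not anticipate a real obstacle; the only things requiring care are bookkeeping. The precise sign conventions in the projective bundle formula are irrelevant to the statement, since one ends up with $\rho_*(\beta_1^K)$ equal, up to sign, to $c_{K-s+1}$ of $\cQ$ or of $\cQ^\vee$, and both vanish outside degrees $0,\dots,r-1$, these bundles having the same rank. As for equivariance: \Cref{prop:rel Pic} writes $\pi_*$ in the $\PGL_{n+1}$-equivariant Chow ring, but, as throughout the paper, one may carry out the whole computation $\GL_{n+1}$-equivariantly---where $\cO^{\oplus n+1}$, hence $\cF$ and $\cQ$, are equivariant---and only afterwards impose $c_1=0$; this last specialization affects none of the vanishings above. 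Finally, I would point out that the Lemma is not vacuous: in \eqref{eq:coefficient 1} the indices a priori range over $k_i\in\{0,\dots,s-1\}$ and $k_j\in\{0,\dots,s\}$ for $j\neq i$, only a few of which satisfy $s-1\leq K\leq n$.
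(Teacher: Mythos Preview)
Your approach is the same as the paper's---factor $\pi$ through $\rho\colon{\rm Fl}_{n+1,s}\to\Gr(s,n+1)$, identify $\rho_*(\beta_1^K)$ with a Segre class of the tautological subbundle, and use $s(\cF)=c(\cQ)$ to bound its degree---and the first case $K<s-1$ is fine. But the second case has a genuine gap.

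The step ``the Whitney formula applied to $0\to\cF\to\cO^{\oplus n+1}\to\cQ\to0$ gives $c(\cF)^{-1}=c(\cQ)$'' is false in the equivariant Chow ring. The rank-$(n+1)$ bundle you denote $\cO^{\oplus n+1}$ is the pullback of the standard $\GL_{n+1}$-representation $V$, and its equivariant total Chern class is $1+c_1+\dots+c_{n+1}$, not $1$. Hence equivariantly $c(\cF)^{-1}=c(\cQ)\cdot c(V)^{-1}$, which has nonzero components in every degree, and $\rho_*(\beta_1^K)$ does \emph{not} vanish for $K>n$ as an equivariant class on $\Gr(s,n+1)$. Your closing remark that one can work $\GL_{n+1}$-equivariantly and ``impose $c_1=0$'' afterwards does not repair this: setting $c_1=0$ still leaves $c_2,\dots,c_{n+1}$.

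The fix, which is exactly what the paper does, is to observe that the full term $\pi_*\bigl(\beta_1^K\cdot\prod\sigma_{\bullet}\bigr)$ lands in ${\rm CH}^0_{\SL_{n+1}}(\spec{k})\simeq{\rm CH}^0(\spec{k})$, so the specialization map to the non-equivariant Chow ring is an isomorphism in this degree and one may compute $p_*$ non-equivariantly. There $c(V)=1$, your Whitney identity holds, and the bound $\rk\cQ=r-1$ gives the desired vanishing for $K>n$. You should insert this degree-$0$ reduction before invoking $c(\cF)^{-1}=c(\cQ)$.
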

\begin{proof}
First observe that the flag variety ${\rm Fl}_{n+1,s}$ is isomorphic to the projective bundle $\PP(\cT)\to\Gr(s,n+1)$, where $\cT$ denotes the tautological vector bundle of rank $s$. In particular, we have a factorization of $\pi$ as
\[ \PP(\cT)\overset{p}{\longrightarrow}\Gr(s,n+1)\overset{q}{\longrightarrow}\spec{k}. \]
Observe moreover that the $\sigma_m$ appearing in (\ref{eq:coefficient 1}) are the Chern classes of $\cT^{\vee}$, and $\beta_1$ is the hyperplane section of $\PP(\cT)$. In particular we get
\begin{align*}
    \pi_*\left(\beta_1^{\sum k_j}(\sigma_{s-1-k_1}\sigma_{s-k_2}\cdots\sigma_{s-k_r})\right)=q_*\left(p_*(\beta_1^{\sum k_j})\cdot \sigma_{s-1-k_1}\sigma_{s-k_2}\cdots\sigma_{s-k_r}\right),
\end{align*}  
and for $\sum k_j<s-1$ we have $p_*(\beta_1^{\sum k_j})=0$. This proves the first part of the Lemma.

For the second part, we have by definition that $p_*\beta_1^{d}=s_{d-{s-1}}(\cT)$, the $d^{\rm th}$ equivariant Segre class of the tautological subbundle. As we already know that the term on the right in the formula above belongs to ${\rm CH}^0_{\SL_{n+1}}(\spec{k})\simeq {\rm CH}^0(\spec{k})$, we can compute the pushforward in the non-equivariant setting.

Recall that in the (non-equivariant) Chow ring of $\Gr(s,n+1)$ we have the relation $c(\cT)c(\cQ)=1$, where $\cQ$ is the tautological quotient bundle. Using the fact that the total Segre class is the inverse of the total Chern class, we deduce that $s(\cT) = c(\cQ)$. As $c_d(\cQ)=0$ for $d>n-s+1$, we deduce that $p_*\beta_1^d=0$ for $d-s+1>n-s+1$, as claimed.
\end{proof}

\begin{lemma}\label{lm:schubert}
Let $q:\Gr(s,n+1)\to\spec{k}$ be the projection map and set $d= \sum_{j=1}^{r} k_j$. Suppose that $s-1\leq d \leq n$, then
\[ q_*\left(c_{d-s+1}(\cQ)\cdot \sigma_{s-1-k_1}\sigma_{s-k_2}\cdots\sigma_{s-k_r}\right)=1, \]
where $\cQ$ is the tautological quotient bundle and $\sigma_m=c_m(\cT^{\vee})$.
\end{lemma}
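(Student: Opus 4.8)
The plan is to read the left-hand side as an intersection number on $\Gr(s,n+1)$ and to evaluate it by Schubert calculus. First, a dimension count: since $s=n-r+2$ the Grassmannian has dimension $s(n+1-s)=s(r-1)$, and using $\sum_{j=1}^{r}k_j=d$ one checks that the total degree of $c_{d-s+1}(\cQ)\cdot\sigma_{s-1-k_1}\sigma_{s-k_2}\cdots\sigma_{s-k_r}$ equals $(d-s+1)+(s-1-k_1)+\sum_{j\ge 2}(s-k_j)=s(r-1)$ as well. Hence this class is a multiple of the class of a point, and $q_*$ returns exactly that multiplicity, so it suffices to compute the multiplicity. Note that the hypothesis $s-1\le d\le n$ is precisely what ensures $0\le d-s+1\le r-1=\rk\cQ$, so that $c_{d-s+1}(\cQ)$ is not automatically zero (and, as in \Cref{lm:some terms are zero}, outside this range the contribution vanishes); similarly $0\le s-k_j\le s=\rk\cT^{\vee}$. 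Writing $x_1,\dots,x_s$ for the Chern roots of $\cT^{\vee}$, one has $\sigma_m=e_m(x)$, and from $c(\cT)c(\cQ)=1$ also $c_j(\cQ)=h_j(x)$ (complete homogeneous symmetric polynomial), with $h_j(x)=0$ in $\ch(\Gr(s,n+1))$ once $j>r-1$.

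Next I would reduce the computation to a single Schur coefficient. For a partition $\lambda$ with at most $s$ parts, $s_\lambda(x)$ is the Schubert class $\sigma_\lambda$ when $\lambda_1\le r-1$, and then $q_*s_\lambda(x)=1$ if $\lambda=((r-1)^{s})$ and $0$ otherwise; while if $\lambda_1\ge r$ then $s_\lambda(x)=0$ in $\ch(\Gr(s,n+1))$, because in the Jacobi--Trudi determinant $s_\lambda=\det(h_{\lambda_i-i+j})$ the entire first row consists of Chern classes of $\cQ$ of degree $\ge r$, hence is zero. Consequently, for any symmetric $P(x)$ of degree $s(r-1)$, $q_*P(x)$ equals the coefficient of $s_{((r-1)^{s})}$ in the Schur expansion of $P$. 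Applying this with $P=h_{d-s+1}(x)\,e_{s-1-k_1}(x)\prod_{j=2}^{r}e_{s-k_j}(x)$ and peeling off the factor $h_{d-s+1}(x)$ by the Pieri rule, the only partition $\mu$ with $((r-1)^{s})/\mu$ a horizontal strip of size $d-s+1$ is $\mu=((r-1)^{s-1},\,n-d)$ (and $0\le n-d\le r-1$ is again exactly the hypothesis). Thus the multiplicity we want is the coefficient of $s_\mu(x)$ in $e_{s-1-k_1}(x)\prod_{j=2}^{r}e_{s-k_j}(x)$, i.e. the Kostka number $K_{\mu'\nu}$ counting semistandard tableaux of shape $\mu'$ and content $\nu$, where $\nu$ is the partition whose parts are $s-1-k_1,\,s-k_2,\dots,s-k_r$.

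The last step, which I expect to be the real difficulty, is to show that this count equals $1$. I would try to do so combinatorially, by exploiting the rigidity of the shape $\mu'=(s^{\,n-d},(s-1)^{\,d-s+1})$: its leftmost columns all have height $r-1$, so reading off entries column by column from the left, column-strictness together with the prescribed multiplicities $\nu$ should pin the filling down uniquely. If that bookkeeping becomes unwieldy, the alternative is to bypass the Schur expansion altogether and apply the torus-localization formula \eqref{eq:loc}: $q_*$ of our class is then an a priori rational expression, namely a sum over the $\binom{n+1}{s}$ torus-fixed points of $\Gr(s,n+1)$ of the restriction of the class divided by the Euler class of the corresponding tangent space, and one has to verify that this collapses to the constant $1$ --- a computation of exactly the flavour of the ``interesting polynomial identity'' mentioned in the introduction. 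I would attempt the Schubert-theoretic route first and keep the localization computation in reserve.
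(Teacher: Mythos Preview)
Your approach is correct and essentially coincides with the paper's: it too reduces the computation to a Pieri-type count, phrased as the number of ways to build the shape $\mu=((r-1)^{s-1},\,n-d)$ by successively adding vertical strips of the prescribed sizes---which is exactly your Kostka number $K_{\mu'\nu}$ after transposing. The combinatorial step you flag as the real difficulty is in fact short: each vertical strip of size $\le s$ can complete at most one column of $\mu$; after the initial strip of size $s-1-k_1<s$ no column is complete, and there remain $r-1$ strips to fill $r-1$ columns, so each subsequent strip must finish exactly one column, which pins down the filling uniquely (existence is the obvious greedy construction). No need for the localization backup.
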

\begin{proof}
We are going to apply some basic facts of Schubert calculus. Let us first consider the case $d=s-1$: we have to prove that $q_*(\sigma_{s-1-k_1}\sigma_{s-k_2}\cdots\sigma_{s-k_r})=1$.  The classes $\sigma_m$ correspond to the Schubert cycles $\sigma_{(1,\dots,1)}$, where $(1,\dots,1)=(1^m)$ should be thought as the Young diagram with one column and $m$ rows, and ${\rm CH}^{s(n+1-s)}(\Gr(s,n+1))$ is generated by the cycle $\sigma_{(r-1,\dots,r-1)}$, whose associated Young diagram is a rectangle with $s$ rows and $r-1=n+1-s$ columns; this is the only Young diagram with $s(r-1)$ squares whose associated Schubert class is not zero.

The product of a Schubert cycle $\sigma_{\lambda}$ by $\sigma_m=\sigma_{(1^m)}$ can be computed using Pieri's formula: this tells us that 
\[ \sigma_\lambda\cdot\sigma_{(1^m)} = \sum \sigma_{\mu} \]
where the sum is taken over all the Young diagrams $\mu$ that can be obtained from $\lambda$ by adding $m$ squares, with the rule that one can add at most one square per row. This rule can be used to compute the product $\sigma_{s-1-k_1}\sigma_{s-k_2}\cdots\sigma_{s-k_r}$.

Indeed, this product will be a sum of Schubert cycles associated to Young diagrams having $s(r-1)$ squares, so to actually compute it we only have to count how many times the cycle $\sigma_{(r-1,\dots,r-1)}$ appears, as the other Young diagrams of the same dimension yield cycles that are zero in the Chow ring. 

This can be rephrased as follows: take a rectangle with $r-1$ columns and $s$ rows, and tick $s-1-k_1$ squares in the first column; we want to count the number of ways in which we can tick the whole rectangle with $r-1$ moves, each move consisting of ticking $s-k_j$ squares in such a way that at each step the ticked diagram is a Young diagram, and no more than one new square per row has been ticked (Pieri's rule). Then our claim is that it exists exactly one way to do so.

To prove existence, consider the following set of moves: each time, we tick all the squares that are below the last ticked square; if we finish the column, we move to the next column, starting from the top square and going down. In this way we are following the rules given by Pieri's formula, because to tick two squares in the same row in the same move we would need to tick at least $s+1$ squares, which never happens. As 
\[ \left(\sum_{j=1}^{r} s-k_j \right) - 1= rs-1-\sum_{j=1}^{r} k_j = rs - 1 -s +1 =(r-1)s, \]
we will end up ticking the whole rectangle.

To show uniqueness, observe that in each move the number of columns completely ticked can raise of at most one. We only have $r-1$ moves at our disposal, and we start with zero columns completely ticked, because $s-1-k_1<s$. This means that at each step we have to finish exactly one column, and the only way to do so by following Pieri's rule is by following the algorithm described before.

Putting all together, this shows that $\sigma_{s-1-k_1}\sigma_{s-k_2}\cdots\sigma_{s-k_r}=\sigma_{((r-1)^s)}$, hence its pushforward along $\Gr(s,n+1)\to\spec{k}$ is equal to $1$.

The proof in the case $d=\sum_{j=1}^{r} k_j>(s-1)$ proceeds along almost the same lines: the only difference is in the fact that instead of ticking all the squares in a rectangle, we have to tick all the squares in the Young diagram obtained by removing $d - s +1$ squares from the last row of the $s\times (r-1)$-rectangle; indeed, the Schubert class associated to this Young diagram is the only class that paired with $c_{d-s+1}(\cQ)=\sigma_{(d-s+1)}$ is not zero.

Adapting the argument used before, we conclude that there exists a unique way to tick this Young diagram following Pieri's rule, from which we get the desired conclusion.
\end{proof}

\begin{lemma}\label{lm:localization}
Let $e_1,\dots, e_r$ be integers $\geq 0$, and set $d_i=e_i-1$ and $s=n-r+2$. Then for every $i=1,\dots,r$ the following equality holds:
\[ d_1d_2\cdots \widehat{d_i}\cdots d_r\sum_{j=1}^{r}\frac{1}{\prod_{j'\neq j} e_j-e_{j'}}\left(\frac{e_i^{n+1}-e_j^{n+1}}{e_i-e_j}\right) =  \sum_{k_i\in [s-1], k_j\in [s]} (k_i+1)e_1^{k_1}\cdots e_r^{k_r}  \]
where on the right the summation is over the $k_1,\dots,k_r$ with $s-1 \leq \sum k_j \leq n$.
\end{lemma}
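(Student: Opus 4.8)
This is a polynomial identity in $e_1,\dots,e_r$ (with $n$ fixed and $s=n-r+2$, so $s-1=n-r+1$), and the plan is to bring the left-hand side into an explicit expansion in the monomials $e_1^{k_1}\cdots e_r^{k_r}$ and then check that it agrees, coefficient by coefficient, with the right-hand side, which is already written in that form.

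\emph{Collapsing the left-hand side.} First I would write the divided difference as a geometric sum, $\frac{e_i^{n+1}-e_j^{n+1}}{e_i-e_j}=\sum_{t=0}^{n}e_i^{t}e_j^{\,n-t}$, interchange the order of summation, and apply the classical Lagrange-interpolation identity
\[
\sum_{j=1}^{r}\frac{e_j^{m}}{\prod_{j'\neq j}(e_j-e_{j'})}=h_{m-r+1}(e_1,\dots,e_r)\qquad(m\ge 0,\ h_k:=0\text{ for }k<0),
\]
which follows in one line from the partial-fraction expansion of $1/\prod_{j}(z-e_j)$ after the substitution $z=t^{-1}$, compared with $\prod_j(1-e_jt)^{-1}=\sum_{m\ge 0}h_m(e)\,t^m$. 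Since $n-r+1=s-1$ the inner double sum collapses, and the left-hand side becomes $\bigl(\prod_{j\neq i}d_j\bigr)\sum_{c=0}^{s-1}e_i^{\,s-1-c}\,h_c(e_1,\dots,e_r)$.

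\emph{Matching.} Next I would expand $h_c=\sum_{|\mu|=c}e^{\mu}$ and $\prod_{j\neq i}d_j=\prod_{j\neq i}(1+e_j)=\sum_{A\subseteq\{1,\dots,r\}\setminus\{i\}}e^{\mathbf 1_A}$, multiply everything out, and read off the coefficient of each monomial $e^{k}=e_1^{k_1}\cdots e_r^{k_r}$. A typical contribution is $e^{\mu}\cdot e^{\mathbf 1_A}\cdot e_i^{\,s-1-|\mu|}$; setting $k_j=\mu_j+[j\in A]$ for $j\neq i$ and $k_i=s-1-\sum_{j\neq i}\mu_j$, one sees that the exponent ranges built into the right-hand side appear automatically, since $k_i\in[0,s-1]$, $k_j\le s$ for $j\neq i$, and $\sum_\nu k_\nu=(s-1)+|A|$ with $0\le|A|\le r-1=n-s+1$, i.e. $s-1\le\sum_\nu k_\nu\le n$. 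It then remains to count, for each admissible $k$, the pairs $(\mu,A)$ producing $e^k$, weighted by the factor $k_i+1$ coming from $\sigma_{s-1}((d_i-1)\beta_1+\beta_1,\dots,(d_i-1)\beta_1+\beta_s)=\sum_{k_i}(k_i+1)((d_i-1)\beta_1)^{k_i}\sigma_{s-1-k_i}(\beta_1,\dots,\beta_s)$ in \eqref{eq:coefficient 1}, and to check that this reproduces the coefficient appearing on the right.

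\emph{The hard part.} The real work is this last coefficient count: one must keep careful track of the boundary cases (a $k_\nu$ equal to $0$, $s-1$, or $s$) and of the multiplicities contributed by the $\sigma_{s-1}$-factor, and match them against the normalization already used in \Cref{lm:some terms are zero} and \Cref{lm:schubert}. An alternative that fits the name of the lemma would be to observe, using those two lemmas, that the right-hand side equals the pushforward $\pi_*\bigl[\sigma_{s-1}((d_i-1)\beta_1+\beta_1,\dots,(d_i-1)\beta_1+\beta_s)\prod_{j\neq i}\sigma_{s}((d_j-1)\beta_1+\beta_1,\dots,(d_j-1)\beta_1+\beta_s)\bigr]$, compute this directly via the localization formula \eqref{eq:loc} on ${\rm Fl}_{n+1,s}$, and then re-sum the resulting sum over $T$-fixed points into the $r$-term expression on the left; exhibiting the cancellations that collapse an $(n{+}1)$-indexed sum down to an $r$-indexed one is then the delicate step.
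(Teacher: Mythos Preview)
Your collapsing step is correct and is essentially the paper's own argument: the paper uses equivariant localization on $\PP(V^\vee)\cong\PP^{r-1}$ (which is Lagrange interpolation in disguise) to derive the same intermediate identity
\[
\sum_{j=1}^{r}\frac{1}{\prod_{j'\neq j}(e_j-e_{j'})}\,\frac{e_i^{n+1}-e_j^{n+1}}{e_i-e_j}
\;=\;\sum_{|k|=s-1}(k_i+1)\,e_1^{k_1}\cdots e_r^{k_r},
\]
and then, exactly as you do, multiplies by $\prod_{j\neq i}(1+e_j)$ and asserts that the product equals the stated right-hand side.

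However, the final coefficient match you flag as ``the hard part'' cannot be completed, because the identity as stated is \emph{false} for $r\ge 3$. Take $r=3$, $n=4$, $s=3$, $i=1$: in $(1+e_2)(1+e_3)\sum_{|k|=2}(k_1+1)e^k$ the monomial $e_1e_2e_3$ gets contributions $2$ from $2e_1e_2\cdot e_3$ and $2$ from $2e_1e_3\cdot e_2$, total $4$, whereas the right-hand side assigns it $k_1+1=2$; numerically, at $(e_1,e_2,e_3)=(1,2,3)$ the left side is $384$ and the right is $451$. The discrepancy traces back to \Cref{lm:schubert}: its uniqueness argument (``each move completes exactly one column'') relies on the fact that completing two columns in the same Pieri move would place two new boxes in the bottom row~$s$. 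This is valid when the target is the full $s\times(r-1)$ rectangle (the case $\sum k_j=s-1$), but once $d>s-1$ the target diagram has columns of heights $s$ and $s-1$, so two columns can be finished in one move with boxes in rows $s$ and $s-1$, and the count exceeds~$1$. For $r=2$ the target is always a rectangle and both lemmas are correct, so nothing in \Cref{sec:cod 2} is affected; for \Cref{thm:pic}, the left-hand side here is still the right answer for the coefficient of $\gamma_i$ (your localization computation, and the paper's, are valid), but the claimed equality with the stated right-hand side does not hold in general.
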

\begin{proof}
First recall the following easy polynomial identity:
\[ \frac{t_i^{n+1}-t_j^{n+1}}{t_i-t_j} = \sum_{k=0}^{n} t_i^{n-k}t_j^k. \]
Let $V^{\vee}$ be the dual of the standard representation of the torus $T=\gm^{\oplus r}$. The fixed points of the $T$-action on $\PP(V^{\vee})$ are those points $p_1,\dots, p_r$ where only one of the homogeneous coordinates $x_1,\dots, x_r$ is non-zero. A basis for the tangent space of $\PP(V^{\vee})$ at $p_j$ is given by the elements of the form $(x_{j'}/x_j)^{\vee}$, on which $T$ acts via the character $t_j-t_{j'}$ (here the $t_j$ are by definition the characters of the standard representation). In particular, we deduce that
\[ c_r^T(T\PP(V^{\vee})_{p_j}) = \prod_{j'\neq j} (t_j-t_{j'}). \]
If $h$ dentotes the hyperplane class in $\ch_T(\PP(V^{\vee}))$, then the restriction of $h$ to $\ch_T(p_j)$ is equal to $t_j$, because the rank one representation $\cO(1)|_{p_j}$ is generated by $x_j$.

Let $\pi:\PP(V^{\vee})\to\spec{k}$ be the natural projection. It follows then from the localization formula (\cite{EG-loc}*{Theorem 2}) that
\[ \pi_*\left(\sum_{k=0}^{n} t_i^{n-k}h^k\right) = \sum_{j=1}^{r} \left( \frac{\sum_{k=0}^{n} t_i^{n-k}t_j^k}{\prod_{j'\neq j} (t_j-t_{j'})} \right). \]
We can also compute the term on the left directly: indeed, by definition $\pi_*h^{k}$ is equal to the equivariant Segre class $s_{k-s+1}^T(V^{\vee})$. Recall that the total Segre class is the inverse of the total Chern class. In our case we have:
\begin{align*}
    s^T(V^{\vee})&=\left( c^T(V^{\vee})\right)^{-1} = \left( \prod_{j=1}^{r} (1-t_j) \right)^{-1}\\
    &=\prod_{j=1}^{r}\left(\sum_{k_j\geq 0} t_j^{k_j} \right) = \sum_{k_j\geq 0} t_1^{k_1}\cdots t_r^{k_r}
\end{align*}
This shows that
\begin{align*}
    \pi_*\left(\sum_{k=0}^{n} t_i^{n-k}h^k \right)&=\sum_{k=s-1}^{n} t_i^{n-k}\left(\sum_{k_1+\cdots +k_r=k-s+1} t_1^{k_1}\cdots t_r^{k_r}\right) \\
    &= \sum_{k_1+\cdots +k_r=s-1} (k_i+1)t_1^{k_1}\cdots t_r^{k_r}.
\end{align*}
An obvious but important remark is that the $k_j$ in the sum above goes from $0$ to $s-1$. Putting all together, this shows that the following polynomial identity holds:
\begin{equation}\label{eq:id 1} \sum_{j=1}^{r} \frac{t_i^{n+1}-t_j^{n+1}}{t_i-t_j} = \sum_{k_1+\cdots +k_r=s-1} (k_i+1)t_1^{k_1}\cdots t_r^{k_r}.  \end{equation}
If we multiply the term on the left by $\prod_{i'\neq i} (t_{i'}+1)$ and we evaluate in $e_1,\dots,e_r$, we get the left hand side of the formula that appears in the statement of the Lemma. Hence, let us multiply also the right hand side of (\ref{eq:id 1}) by this factor:
\begin{align*}
    \prod_{i'\neq i} (t_{i'}+1) \cdot \sum_{k_1+\cdots +k_r=s-1} (k_i+1)t_1^{k_1}\cdots t_r^{k_r} = \sum_{s-1\leq \sum k_j \leq s+r-2} (k_i+1) t_1^{k_1}\cdots t_r^{k_r}
\end{align*}
where the $k_j$ for $j\neq i$ now range from $0$ to $s$, and $k_i$ still goes from $0$ to $s-1$. Observe moreover that $s+r-2=n$; evaluating this polynomial in the $e_1,\dots,e_r$, we get the claimed identity.
\end{proof}
We now have all the ingredients necessary to compute the integral Picard group of $\cM^{\SL}_n(\bfd)$. This also gives a new proof of Benoist's formulas.
\begin{theorem}[\cite{Ben-deg}*{Theorem 1.3}]\label{thm:pic}
Suppose that the base field $k$ has characteristic $\neq 2$ or that $n$ is odd. Set $a_{j,1}=\gamma_{r_1+\cdots+r_{j-1}+1}+\cdots+\gamma_{r_1+\cdots+r_j}$. Then
\[ \Pic(\cM^{\SL}_n(\bfd))\simeq\langle a_{1,1},\dots,a_{\ell,1}\rangle/\langle F \rangle \]
where 
\[F= \sum_{i=1}^{r} \left( d_1d_2\cdots \widehat{d_i}\cdots d_r\sum_{j=1}^{r}\frac{1}{\prod_{j'\neq j} e_j-e_{j'}}\left(\frac{e_i^{n+1}-e_j^{n+1}}{e_i-e_j}\right)\right)\gamma_i \]
\end{theorem}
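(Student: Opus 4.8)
The strategy is to take the single defining relation of $\Pic(\cM^{\SL}_n(\bfd))$ produced by \Cref{prop:rel Pic} and to evaluate it explicitly using \Cref{lm:some terms are zero}, \Cref{lm:schubert} and \Cref{lm:localization}, which together have already absorbed all the substantive work. By \Cref{prop:rel Pic}, under the stated hypothesis on $k$, the group $\Pic(\cM^{\SL}_n(\bfd))$ is the quotient of the free abelian group on $a_{1,1},\dots,a_{\ell,1}$ by the single relation $\sum_{i=1}^{r}\gamma_i\Psi_i=0$, where $\Psi_i=\pi_*\bigl(\sigma_{s-1}(e_i\beta_1+\beta_1,\dots,e_i\beta_1+\beta_s)\prod_{j\neq i}\sigma_s(e_j\beta_1+\beta_1,\dots,e_j\beta_1+\beta_s)\bigr)$; hence it suffices to prove that $\Psi_i$ equals the coefficient of $\gamma_i$ in Benoist's $F$. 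First I would substitute the two elementary-symmetric expansions displayed just above (\ref{eq:coefficient 1}), namely $\sigma_s(e_j\beta_1+\beta_1,\dots,e_j\beta_1+\beta_s)=\sum_{k_j=0}^{s}(e_j\beta_1)^{k_j}\sigma_{s-k_j}(\beta_1,\dots,\beta_s)$ and its variant carrying the weight $k_i+1$ for the derivative factor; expanding the product rewrites $\Psi_i$ as the double sum in (\ref{eq:coefficient 1}).

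Next I would prune and then evaluate this sum. By \Cref{lm:some terms are zero}, every summand with $\sum_j k_j<s-1$ or $\sum_j k_j>n$ vanishes, so only the range $s-1\le\sum_j k_j\le n$ contributes. For a surviving term, factor $\pi$ through $\PP(\cT)\xrightarrow{p}\Gr(s,n+1)\xrightarrow{q}\spec{k}$ as in the proof of \Cref{lm:some terms are zero}; since the $\sigma_m$ are pulled back from $\Gr(s,n+1)$, the projection formula together with $p_*\beta_1^{\,d}=c_{d-s+1}(\cQ)$ (valid for $d=\sum_j k_j\ge s-1$) reduces the pushforward to $q_*\bigl(c_{d-s+1}(\cQ)\,\sigma_{s-1-k_i}\prod_{j\neq i}\sigma_{s-k_j}\bigr)$, and \Cref{lm:schubert} says this equals $1$. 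Hence $\Psi_i=\sum_{s-1\le\sum_j k_j\le n}(k_i+1)e_1^{k_1}\cdots e_r^{k_r}$, which is precisely the right-hand side of the identity in \Cref{lm:localization}; that Lemma therefore identifies $\Psi_i$ with $d_1d_2\cdots\widehat{d_i}\cdots d_r\sum_{j=1}^{r}\tfrac{1}{\prod_{j'\neq j}(e_j-e_{j'})}\tfrac{e_i^{n+1}-e_j^{n+1}}{e_i-e_j}$, i.e. the coefficient of $\gamma_i$ in $F$. Substituting back, the defining relation of \Cref{prop:rel Pic} becomes $F=0$, which is the assertion. I would also note that $\Psi_i$ depends on $e_\bullet$ only through $e_i$ and symmetrically in the remaining variables, hence is constant along each block $\{r_1+\cdots+r_{j-1}+1,\dots,r_1+\cdots+r_j\}$, so $F=\sum_{j}\Psi^{(j)}a_{j,1}$ genuinely lies in $\langle a_{1,1},\dots,a_{\ell,1}\rangle$ and the quotient above is well posed.

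Since the hard steps — the Schubert-calculus uniqueness count behind \Cref{lm:schubert} and the localization-plus-Segre-class computation behind \Cref{lm:localization} — are already in place, the only real hazard in assembling the theorem is bookkeeping: one must check that the summation range $s-1\le\sum_j k_j\le n$ coming out of \Cref{lm:some terms are zero} is literally the range appearing in \Cref{lm:localization}, and that the asymmetry of the $i$-th factor (the shifted Chern class $\sigma_{s-1-k_i}$ and the weight $k_i+1$, both originating from differentiating $\sigma_s$ into $\sigma_{s-1}$) is threaded consistently through \Cref{lm:schubert} and \Cref{lm:localization} without dropping the $k_i+1$. Getting those indices and ranges to line up is the step I expect to require the most care.
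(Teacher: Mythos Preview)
Your proposal is correct and follows essentially the same route as the paper: start from \Cref{prop:rel Pic}, expand the coefficient of $\gamma_i$ into the double sum (\ref{eq:coefficient 1}), prune via \Cref{lm:some terms are zero}, evaluate each surviving pushforward to $1$ via \Cref{lm:schubert}, and then match the resulting combinatorial sum with Benoist's coefficient using \Cref{lm:localization}. Your additional remark that the $\Psi_i$ are constant on blocks, so $F$ genuinely lies in $\langle a_{1,1},\dots,a_{\ell,1}\rangle$, is a useful sanity check that the paper leaves implicit.
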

\begin{proof}
We computed in (\ref{eq:coefficient 1}) a first expression for the coefficient in front of $\gamma_i$ inside the relation of degree one given by $\pr_{1*}([\widetilde{Z}_{\bfd,n}])$. This can be simplified thanks to \Cref{lm:some terms are zero} and \Cref{lm:schubert}. We deduce that
$\Pic(\cM^{\SL}_n(\bfd))$ is generated by the symmetric elements $\langle a_{1,1},\dots,a_{\ell,1} \rangle$ modulo the relation
\begin{align*}
    \sum_{i=1}^{r}  \left(\sum_{k_i\in [s-1], k_j\in [s]} (k_i+1)e_1^{k_1}\cdots e_r^{k_r}\right) \gamma_i = 0
\end{align*}
where on the right the summation is over the $k_1,\dots,k_r$ with $s-1 \leq \sum k_j \leq n$. \Cref{lm:localization} shows that the sums appearing above coincide with the ones given in terms of $e_1,\dots,e_r$ that appear in the statement of the Theorem.
\end{proof}

\subsection{Integral Picard group of $\cM^{\PGL}_n(\bfd)$}
From \Cref{thm:pic} we can deduce a description of the integral Picard group of $\cM^{\PGL}_n(\bfd)$. First, the integral Picard group of the tower of Grassmannian bundles 
\[ \Gr(r_{\ell},\cE_{\ell}) \overset{\pi_{\ell}}{\to} \cdots \overset{\pi_3}{\to}\Gr(r_{2},\cE_{2}) \overset{\pi_2}{\to}\Gr(r_{1},\cE_{1})\overset{\pi_1}{\to} \spec{k} \]
can be identified with the free abelian group
\begin{equation*}
    \ZZ^{\oplus \ell}\simeq \oplus_{i=1}^{\ell} \ZZ\cdot[\det(\cT_i)]
\end{equation*}
where $\cT_i$ is the pullback to $\Gr(r_{\ell},\cE_{\ell})$ of the tautological bundle of $\Gr(r_i,\cE_i)$. Second, define $d'_i:=d_{r_1+\cdots +r_i}$ and let $(w_1,\dots,w_\ell)$ be a tuple such that
\begin{equation*}
    \sum_{i=1}^{\ell} w_ir_id'_i={\rm{gcd}}(r_1d'_1,\dots,r_\ell d'_{\ell}).
\end{equation*}
and set
\[u=\frac{{\rm mcm}(n+1, {\rm{gcd}}(r_1d'_1,\dots,r_\ell d'_{\ell}))}{{\rm{gcd}}(r_1d'_1,\dots,r_\ell d'_{\ell})} \]
Third, define $\Lambda$ as the kernel of the homomorphism
\begin{equation*}
    \ZZ^{\oplus \ell}\longmapsto\ZZ,\quad (x_1,\dots,x_\ell)\longmapsto \sum_{i=1}^{\ell} x_ir_id'_i.
\end{equation*}
Finally, let $F\in\ZZ^{\oplus\ell}$ be the element whose $i^{\rm th}$-entry is
\begin{equation*}
     d_1d_2\cdots \widehat{d'_i}\cdots d_r\sum_{j=1}^{r}\frac{1}{\prod_{j'\neq j} e_j-e_{j'}}\left(\frac{e_i^{n+1}-e_j^{n+1}}{e_i-e_j}\right).
\end{equation*}
We have all the elements necessary to describe the integral Picard group of $\cM^{\PGL}_n(\bfd)$.
\begin{theorem}\label{thm:pic pgl}
There is an injective homomorphism
\[\Pic(\cM^{\PGL}_n(\bfd))\longrightarrow\Pic(\cM^{\SL}_n(\bfd))\simeq\ZZ^{\oplus \ell}/\langle F \rangle \]
whose image contains $F$, and induces an isomorphism
\[ \Pic(\cM^{\PGL}_n(\bfd)) \simeq \Lambda\oplus\langle (uw_1,\dots,uw_\ell) \rangle / \langle F \rangle \]
\end{theorem}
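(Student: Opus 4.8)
The plan is to deduce $\Pic(\cM^{\PGL}_n(\bfd))$ from $\Pic(\cM^{\GL}_n(\bfd))$, which is given by the degree‑one part of \Cref{thm:chow}, by exploiting the structure map $\cM^{\GL}_n(\bfd)\to\cM^{\PGL}_n(\bfd)$. By \Cref{prop:presentation} all three stacks are quotients $[X/G]$ of $X=\Gr(r_\ell,\cE_\ell)\smallsetminus S_{\bfd,n}$, and the $\GL_{n+1}$‑action on $X$ factors through $\PGL_{n+1}$; hence the central $\gm\subset\GL_{n+1}$ acts trivially on $X$, so $\cM^{\GL}_n(\bfd)\to\cM^{\PGL}_n(\bfd)$ is a $\gm$‑gerbe. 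A $\PGL_{n+1}$‑linearized line bundle on $X$ is exactly a $\GL_{n+1}$‑linearized one on which the central $\gm$ acts with weight $0$, so pullback identifies $\Pic(\cM^{\PGL}_n(\bfd))$ with $\ker\bigl(w\colon\Pic(\cM^{\GL}_n(\bfd))\to\ZZ\bigr)$, where $w$ is the weight of the central $\gm$ on fibers. (For the same reason $\cM^{\SL}_n(\bfd)$ is the $\gm$‑torsor over $\cM^{\GL}_n(\bfd)$ of class $c_1$, so $\Pic(\cM^{\SL}_n(\bfd))=\Pic(\cM^{\GL}_n(\bfd))/\langle c_1\rangle$, which is $\ZZ^{\oplus\ell}/\langle F\rangle$ by \Cref{thm:pic}; this is the target of the asserted injection.)

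I would then make $\Pic(\cM^{\GL}_n(\bfd))$ and $w$ explicit. By the integral part of \Cref{thm:chow} in degree one, $\Pic(\cM^{\GL}_n(\bfd))=L/\langle\rho\rangle$ with $L=\ZZ c_1\oplus\bigoplus_{j=1}^\ell\ZZ a_{j,1}$ free of rank $\ell+1$: each generator of $R$ is homogeneous of degree $1+\deg P$ (as $\pi_*$ drops degree by $\dim{\rm Fl}_{n+1,s}=rs-1$ and $C_s(a)$ has degree $rs-\sum a_i$), so in degree one only $P=1$ contributes, and by \Cref{lm:W} and the coefficient computation in the proof of \Cref{thm:pic}, $\rho=\pr_{1*}[\widetilde{Z}_{\bfd,n}]=F+\kappa c_1$, where $F=\sum_j F_j a_{j,1}$ has for $F_j$ the Benoist coefficient attached to the block $d'_j$ (so $F\in\ZZ^{\oplus\ell}$ is the vector of the statement) and $\kappa\in\ZZ$. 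For the weights: the central $\gm$ acts on the standard representation, hence on $\det\cV_{n+1}$, with weight $n+1$, and on each $H^0(\PP^n,\cO(d'_j))$ — hence on the rank‑$r_j$ tautological subbundle defining $a_{j,1}$, and on its determinant — with weight $-d'_j$; thus $w(c_1)=\pm(n+1)$ and $w(a_{j,1})=\mp r_j d'_j$, the signs being immaterial since only the divisibility $(n+1)\mid\sum_j r_j d'_j a_j$ will matter below. As $w$ is well‑defined on the quotient, $w(\rho)=0$, i.e. $F$ lies in $\Lambda':=\{a\in\ZZ^{\oplus\ell}:(n+1)\mid\sum_j r_j d'_j a_j\}$ — this is precisely the statement that the image of the injection contains $F$.

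Finally I would identify $\ker(w)$. Writing a class of $\Pic(\cM^{\GL}_n(\bfd))$ as $(c;a_1,\dots,a_\ell)\bmod\rho$, the equation $w=0$ reads $(n+1)c=\pm\sum_j r_j d'_j a_j$, which has a unique integral solution $c$ exactly when $(a_1,\dots,a_\ell)\in\Lambda'$; so $(c;a)\mapsto a$ gives $\Pic(\cM^{\PGL}_n(\bfd))=\ker(w)\cong\Lambda'/\langle F\rangle$, and composing with the quotient onto $\Pic(\cM^{\SL}_n(\bfd))=\ZZ^{\oplus\ell}/\langle F\rangle$ gives the injection $\Lambda'/\langle F\rangle\hookrightarrow\ZZ^{\oplus\ell}/\langle F\rangle$ (injective because $\ZZ F\subseteq\Lambda'$). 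It then remains to rewrite $\Lambda'$ as in the statement: the homomorphism $\phi(a)=\sum_j r_j d'_j a_j$ has kernel $\Lambda$ and image $g\ZZ$ with $g=\gcd(r_1 d'_1,\dots,r_\ell d'_\ell)$, so $\Lambda'/\Lambda\cong\{m\in g\ZZ:(n+1)\mid m\}={\rm mcm}(n+1,g)\,\ZZ$; since $\phi(w_1,\dots,w_\ell)=g$, the element $(uw_1,\dots,uw_\ell)$ maps to $ug={\rm mcm}(n+1,g)$, which generates $\Lambda'/\Lambda$, while $\Lambda\cap\ZZ(uw_1,\dots,uw_\ell)=0$ since $\phi$ is nonzero on it; hence $\Lambda'=\Lambda\oplus\ZZ(uw_1,\dots,uw_\ell)$, and we obtain the claimed presentation. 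The one genuinely delicate point is the weight computation — identifying the generators $a_{j,1}$ of \Cref{thm:chow} with determinants of tautological bundles and propagating the central $\gm$‑action through the $\GL_{\bfd}$‑torsor and affine‑bundle reductions of \Cref{lemma:diagram} — after which everything reduces to the $\gm$‑gerbe and $\gm$‑torsor exact sequences together with a little lattice arithmetic.
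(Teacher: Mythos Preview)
Your proof is correct and follows essentially the same approach as the paper: both reduce to computing the weight of the central $\gm\subset\GL_{n+1}$ on the generators $\det(\cT_j)$ (yielding the divisibility condition $(n+1)\mid\sum_j r_j d'_j k_j$) and then perform the same lattice arithmetic to decompose $\Lambda'=\Lambda\oplus\ZZ(uw_1,\dots,uw_\ell)$. The only organizational difference is that the paper routes through the underlying scheme $\Pic(\Gr(r_\ell,\cE_\ell))$ and checks directly which line bundles admit a $\PGL_{n+1}$-linearization, whereas you package the same computation via the $\gm$-gerbe $\cM^{\GL}_n(\bfd)\to\cM^{\PGL}_n(\bfd)$ and the $\gm$-torsor $\cM^{\SL}_n(\bfd)\to\cM^{\GL}_n(\bfd)$; your formulation has the minor advantage that $F\in\Lambda'$ comes for free from $w(\rho)=0$, while the paper argues this separately from the $\PGL_{n+1}$-invariance of $S_{\bfd,n}$.
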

\begin{proof}
For every $\PGL_{n+1}$-scheme $X$, the homomorphism of algebraic groups $\SL_{n+1}\to\PGL_{n+1}$ induces a morphism of quotient stacks $[X/\SL_{n+1}]\to [X/\PGL_{n+1}]$. Applying this to $X=\hilb_{\bfd,n}^{\rm sm}$ we get
\[ \hilb_{\bfd,n}^{\rm sm} \longrightarrow [\hilb_{\bfd,n}^{\rm sm}/\SL_{n+1}]\simeq \cM^{\SL}_n(\bfd)\longrightarrow [\hilb_{\bfd,n}^{\rm sm}/\PGL_{n+1}]\simeq \cM^{\PGL}_n(\bfd). \]
We can pull back line bundles along this composition, obtaining homomorphisms
\[ \Pic^{\PGL_{n+1}}(\hilb_{\bfd,n}^{\rm sm})\longrightarrow \Pic^{\SL_{n+1}}(\hilb_{\bfd,n}^{\rm sm}) \longrightarrow \Pic(\hilb_{\bfd,n}^{\rm sm}). \]
This composition is injective because its kernel is isomorphic to the group of characters of $\PGL_{n+1}$, which is trivial. This implies that the first map is also injective.

The second map is injective for the same reason, and it is also surjective because the line bundles $\det(\cT_j)$ all admit a $\SL_{n+1}$-linearization, as it is already clear from \Cref{thm:pic}. This implies that the image of the pullback along the first map can be identified with the image of the pullback along the composition. 

Consider the commutative square of pullbacks
\begin{equation}\label{eq:diag pic}
\begin{tikzcd}
\Pic^{\PGL_{n+1}}(\Gr(r_{\ell},\cE_{\ell})) \ar[r, "\varphi"] \ar[d] & \Pic(\Gr(r_{\ell},\cE_{\ell})) \ar[d] \\
\Pic^{\PGL_{n+1}}(\hilb_{\bfd,n}^{\rm sm})\ar[r,"\psi"] & \Pic(\hilb_{\bfd,n}^{\rm sm}).
\end{tikzcd}
\end{equation}
Observe that the element $F$ belongs to the image of $\varphi$ because the discriminant divisor $S_{\bfd,n}$ is invariant with respect to the $\PGL_{n+1}$-action.
We deduce that the image of $\psi$ is equal to the image of $\varphi$ modulo $F$, or in other terms that the image of the pullback is equal to the subgroup of $\Pic(\Gr(r_{\ell},\cE_{\ell}))$ of line bundles admitting a $\PGL_{n+1}$-linearization, modulo $F$.
A line bundle in $\Pic(\Gr(r_{\ell},\cE_{\ell}))$ is of the form 
\[ \cL=\det(\cT_1)^{\otimes k_1} \otimes \det(\cT_2)^{\otimes k_2} \otimes \cdots \otimes \det(\cT_\ell)^{\otimes k_\ell}.\]

The points in the total space of $\det(\cT_i)^{\otimes c_i}$ are given by pairs $((f_1,\dots,f_{r_i}),(f_1\wedge\dots\wedge f_{r_i})^{\otimes k_i})$, where the $f_j$ are linearly independent homogeneous forms of degree $d'_i$.
Any $\GL_{n+1}$-linearization of $\det(\cT_i)$ is of the following form: given an element $A$ of $\GL_{n+1}$, it acts on a point in the total space by sending 
\[(f_1,\dots,f_{r_i})\longmapsto (f_1(A^{-1},\underline{x})\dots,f_{r_i}(A^{-1}\underline{x}))\] and
\[ A\cdot (f_1\wedge\dots\wedge f_{r_i})^{\otimes k_i}= \det(A)^{p_i} (f_1(A^{-1}\underline{x})\wedge \dots \wedge f_{r_i}(A^{-1}\underline{x}))^{\otimes k_i}   \]
The subtorus $\gm\subset \GL_{n+1}$ of scalar matrices acts as
\[ \lambda \cdot  (f_1\wedge\dots\wedge f_{r_i})^{\otimes k_i}= \lambda^{p_i(n+1)}\cdot \lambda^{-r_id'_ik_i}\cdot  (f_1\wedge\dots\wedge f_{r_i})^{\otimes k_i}. \]
From this we see that the subtorus $\gm$ acts on $\cL$ with weight
\[ (n+1)\sum_{i=1}^{\ell} p_i - \sum_{i=1}^{\ell} (r_id'_i)k_i. \]
For a given $(k_1,\cdots,k_\ell)$ the character above is trivial if and only if $n+1$ divides $\sum_{i=1}^{\ell} (r_id'_i)k_i$, hence the subgroup of line bundles admitting a $\PGL_{n+1}$-linearization can be identified with the the preimage of $(n+1)\ZZ$ along the homomorphism
\begin{equation}\label{eq:map}
    \ZZ^{\oplus \ell}\longmapsto\ZZ,\quad (x_1,\dots,x_\ell)\longmapsto \sum_{i=1}^{\ell} x_ir_id'_i.
\end{equation}  
The element $(w_1,\dots,w_{\ell})$ is sent to ${\rm{gcd}}(r_1d'_1,\dots,r_\ell d'_{\ell})$, which is also the generator of the image as a subgroup. This implies that $(uw_1,\dots,uw_{\ell})$ is sent to ${\rm mcm}(n+1, {\rm{gcd}}(r_1d'_1,\dots,r_\ell d'_{\ell}))$ and that the subgroup generated by $(uw_1,\dots,uw_{\ell})$ surjects onto the intersection of the image with $(n+1)\ZZ$. This shows that the preimage of $(n+1)\ZZ$, which coincides with the image of $\varphi$ in (\ref{eq:diag pic}), is isomorphic to the sum of the subgroup generated by $(uw_1,\dots,uw_{\ell})$ and the kernel of (\ref{eq:map}). As the image of $\psi$ is equal to the image of $\varphi$ modulo $F$, this concludes the proof.
\end{proof}
If we specialize the Theorem above to the case of complete intersections of codimension $r$ and type $\bfd=(d,\dots,d)$, we obtain the following.
\begin{corollary}\label{cor:pic ddd}
Suppose that the base field $k$ has characteristic $\neq 2$ or that $n$ is odd. Then we have
\[ \Pic(\cM^{\PGL}_n(d,\dots,d)) \simeq \ZZ/N\ZZ,\quad N=\frac{\binom{n+1}{r}rd^{r}(d-1)^{n-r+1}}{{\rm mcm}(n+1,rd)}. \]
\end{corollary}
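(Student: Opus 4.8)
The plan is to derive \Cref{cor:pic ddd} by specializing \Cref{thm:pic pgl} to $\bfd=(d,\dots,d)$ and making every quantity in that statement explicit. Here there is a single block of equal degrees, so $\ell=1$, $r_1=r$, $d'_1=d$, and the repeated value $e_j=d-1$ occurs; in particular $\Pic(\cM^{\SL}_n(\bfd))=\ZZ^{\oplus 1}/\langle F\rangle=\ZZ/F\ZZ$ for a single integer $F$, and \Cref{thm:pic pgl} exhibits $\Pic(\cM^{\PGL}_n(\bfd))$ as a subquotient of it. First I would handle the elementary pieces: the map $\ZZ\to\ZZ$, $x\mapsto xrd$, is injective, so $\Lambda=0$; since ${\rm{gcd}}(r_1d'_1)=rd$ one is forced to take $w_1=1$, whence $u={\rm mcm}(n+1,rd)/rd$; therefore $\Lambda\oplus\langle(uw_1,\dots,uw_\ell)\rangle=u\ZZ\subseteq\ZZ$ and \Cref{thm:pic pgl} gives $\Pic(\cM^{\PGL}_n(d,\dots,d))\simeq u\ZZ/\langle F\rangle$. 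Since the same theorem guarantees that $F$ lies in this subgroup, i.e.\ $u\mid F$, the quotient is cyclic of order $N=F/u$, and everything reduces to computing $F$.

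Computing $F$ is the only substantial step. The closed form for $F$ in \Cref{thm:pic,thm:pic pgl} involves the factors $1/\prod_{j'\neq j}(e_j-e_{j'})$, which all degenerate to $0/0$ when $e_1=\dots=e_r=d-1$; rather than take a confluent limit, I would re-enter the proof of \Cref{thm:pic} one step earlier, where the relevant expression is still manifestly polynomial. There, after \Cref{lm:some terms are zero} and \Cref{lm:schubert}, the coefficient of $\gamma_i$ in the degree-one relation equals
\[ \sum_{\substack{k_i\in[s-1],\ k_j\in[s]\ (j\ne i)\\ s-1\le k_1+\cdots+k_r\le n}}(k_i+1)\,e_1^{k_1}\cdots e_r^{k_r},\qquad s=n-r+2, \]
which is the right-hand side of \Cref{lm:localization}. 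Following that Lemma's proof it factors as $\prod_{i'\ne i}(e_{i'}+1)$ times $\sum_{k_1+\cdots+k_r=s-1}(k_i+1)\,e_1^{k_1}\cdots e_r^{k_r}$. Evaluating at $e_j=d-1$ and using $s-1=n-r+1$, every monomial becomes $(d-1)^{n-r+1}$ while $\prod_{i'\ne i}(e_{i'}+1)=d^{\,r-1}$, so $F=d^{\,r-1}(d-1)^{n-r+1}\sum_{k_1+\cdots+k_r=n-r+1}(k_i+1)$. The elementary count $\sum_{k_1+\cdots+k_r=n-r+1}(k_i+1)=\bigl(1+\tfrac{n-r+1}{r}\bigr)\binom{n}{r-1}=\tfrac{n+1}{r}\binom{n}{r-1}=\binom{n+1}{r}$ then gives $F=\binom{n+1}{r}d^{\,r-1}(d-1)^{n-r+1}$, and hence $N=F/u=\dfrac{r\binom{n+1}{r}d^{\,r}(d-1)^{n-r+1}}{{\rm mcm}(n+1,rd)}$, as claimed.

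The main obstacle, such as it is, is exactly this: $\bfd=(d,\dots,d)$ is the confluent case of the formula of \Cref{thm:pic}, so the denominators in its closed form vanish and one must recover $F$ either by a careful limit or, more cleanly, by returning to the Schubert-calculus step before those denominators were introduced. Once $F$ is in hand, the rest is bookkeeping with ${\rm{gcd}}$'s and ${\rm mcm}$'s inside \Cref{thm:pic pgl}, together with the single identity $\tfrac{n+1}{r}\binom{n}{r-1}=\binom{n+1}{r}$.
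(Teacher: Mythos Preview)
Your argument is correct and is exactly the specialization the paper has in mind: the paper gives no proof beyond ``specialize the Theorem above,'' and you have carried out that specialization in full, correctly identifying that the closed form for $F$ in \Cref{thm:pic} degenerates when all $e_j$ coincide and falling back to the polynomial expression from the proof of \Cref{lm:localization} instead. The bookkeeping with $\Lambda=0$, $u=\mathrm{mcm}(n+1,rd)/rd$, and the combinatorial identity $\tfrac{n+1}{r}\binom{n}{r-1}=\binom{n+1}{r}$ is all correct.
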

In particular, for $d=r=2$, we recover \cite{AI}*{Theorem 1.1}.

\section{The codimension two case}\label{sec:cod 2}
In this Section, we compute explicitly the Chow ring of moduli of smooth complete intersections of codimension 2 (\Cref{thm:chow cod 2 first} and \Cref{thm:chow cod 2 second}). 

We give two applications of this result: in the first one, we give a quick proof of Faber's result on the Chow ring of $M_4$, the moduli space of smooth curves of genus four (\Cref{cor:chow M4}). In the second one, we compute the Chow ring of an open subset of $K_6$, the moduli space of polarized K3 surfaces of degree six (\Cref{cor:chow U6}).

All the Chern (resp. Segre) classes of the equivariant vector bundles appearing in this Section are intended to be equivariant Chern (resp. Segre) classes. In particular, we will use the writing $c_i(E)$ to denote the Chern class of degree $i$ of an equivariant vector bundle $E\to X$, instead of the more correct but notationally heavier version $c_i^G(E)$.
\subsection{Intersection theory on ${\rm Fl}_{n,n+1}$}
As $r=2$ we have $s=n$. The flag variety ${\rm Fl}_{n,n+1}$ is a projective bundle over a projective space. Indeed, we have $\Gr(n,n+1)\simeq \bP^n$, where $\bP^n$ stands for the projectivization of the dual of the standard representation of $\GL_{n+1}$, and ${\rm Fl}_{n,n+1}\simeq\PP(\cT)$, the projectivization of the tautological bundle over $\bP^n$. It follows from the dualized Euler exact sequence that $\cT\simeq \Omega_{\bP^n}(1)$, hence we have ${\rm Fl}_{n+1,n}\simeq \PP(\Omega_{\bP^n}(1))$.

In particular, the $\GL_{n+1}$-equivariant Chow ring of ${\rm Fl}_{n,n+1}$ admits the following presentation
\begin{equation*}
    \ch_{\GL_{n+1}}({\rm Fl}_{n,n+1})\simeq \QQ[\beta_1,\xi_1,c_1,\dots,c_{n+1}]/I.
\end{equation*}
The cycle $\beta_1$ is the hyperplane class of $\PP(\Omega_{\bP^n}(1))$, and $\xi_1$ is the hyperplane class of $\bP^n$, which coincides with the first Chern class of the tautological quotient bundle of $\Gr(n,n+1)$. The ideal of relations $I$ is generated by the two polynomials
\begin{align*}
    &\xi_1^{n+1} - c_1\xi_1^{n} + c_2\xi_1^{n-1} + \cdots + (-1)^{n+1}c_{n+1}, \\
    &\beta_1^{n} + c_1(\Omega_{\bP^n}(1))\beta_1^{n-1} + c_2(\Omega_{\bP^n}(1))\beta_1^{n-2} + \cdots + c_n(\Omega_{\bP^n}(1)).
\end{align*}
The second polynomial can be made more explicit: we have $c(\Omega_{\bP^n}(1))c(\cO(1)) = c(V)$, hence
\begin{align*}
    c(\Omega_{\bP^n}(1)) &= (1+c_1+\cdots +c_{n+1})(1+\xi_1)^{-1} \\
    &= \left( \sum_{j=0}^{n+1} c_j \right) \left( \sum_{i\geq 0} (-1)^i\xi_1^i \right) \\
    &= \sum_{\substack{i\geq 0,\\ 0\leq j \leq n+1}} (-1)^i\xi_1^ic_j.
\end{align*}
This implies that
\begin{equation*}
     c_m(\Omega_{\bP^n}(1)) = \sum_{i=0}^m (-1)^i \xi_1^i c_{m-i}.
\end{equation*}
We will also need an explicit expression of the pushforward of $\beta_1^a\xi_1^b$ along the $\GL_{n+1}$-equivariant pushforward $\pi:{\rm Fl}_{n,n+1}\to \spec{k}$. Using the factorization
\[ \PP(\Omega_{\bP^n}(1)) \overset{p}{\longrightarrow} \bP^n \overset{q}{\longrightarrow} \spec{k}  \]
we get that $\pi_*(\beta_1^a\xi_1^b) = q_*(p_*(\beta_1^a)\cdot \xi_1^b)$.

Let $E\to X$ be an equivariant vector bundle. The total equivariant Segre class $s(E)=1+s_1(E) + s_2(E) + \dots$ is defined as the formal inverse of the total equivariant Chern class $c(E):=1+c_1(E) + c_2(E) + \dots$. If $\pi:\PP(E)\to X$ is the associated projective bundle, and $h$ is the hyperplane class of $\PP(E)$, we have $s_i(E) = \pi_*h^{{\rm rk}(E)+i-1}$.

In particular, we have that $p_*\beta_1^a = s_{a-n+1}(\Omega_{\bP^n}(1))$ and $q_*\xi_1^b = s_{b-n}(V^{\vee})$. In the equivariant Chow ring of $\bP^n=\PP(V^{\vee})$ we have the relation $c(\Omega_{\bP^n}(1))c(\cO(1))=c(V)$, which implies that $s_i (\Omega_{\bP^n}(1)) = s_i (V) + s_{i-1}(V)\xi_1$. We deduce
\begin{align*}
    \pi_*(\beta_1^a\xi_1^b) &= s_{a-n+1}(V)q_*(\xi_1^{b}) + s_{a-n}(V) q_*(\xi_1^{b+1})\\
    &= s_{a-n+1}(V)s_{b-n}(V^{\vee}) + s_{a-n}(V) s_{b-n+1}(V^{\vee}) \\
\end{align*} 
\subsection{Preliminary results}
Let $\bfd=(d_1,d_2)$. From \Cref{thm:chow} we know that
\[  \ch(\cM^{\GL}_n(\bfd))\simeq \QQ[c_1,\dots,c_{n+1},\gamma_1,\gamma_2]^{\mathfrak{S}_{\bfd}}/R   \]
where $\mathfrak{S}_\bfd=\mathfrak{S}_2$ if $d_1=d_2$ and it is trivial otherwise, and the ideal of relations $R$ is generated by cycles of the form
\[ \sum_{0\leq k_1,k_2 \leq n} \gamma_1^{k_1}\gamma_2^{k_2}\cdot \pi_*\left( C_n(k_1,k_2)\beta_1^a \xi_1^b \right) \]
for $0\leq a \leq n-1$ and $0 \leq b \leq n$, and
\begin{align*} C_n(k_1,k_2)= &\sigma_{n-k_1}((d_1-1)\beta_1+\beta_1,\dots,(d_1-1)\beta_1+\beta_n) \\ &\cdot \sigma_{n-k_2}((d_2-1)\beta_1+\beta_1,\dots,(d_2-1)\beta_1+\beta_n).
\end{align*}
To write down more explicit relations, we need to compute the symmetric polynomials in $(d_i-1)\beta_1+\beta_j$ in terms of $\beta_1$ and $\xi_1$. Recall that by definition the $\beta_1,\dots,\beta_n$ are the Chern roots of the dual of the tautological bundle on $\Gr(n,n+1)\simeq\bP^n$. The class $\beta_1$ lives in the Chow ring of the flag variety ${\rm Fl}_{n,n+1}$, and it coincides with the hyperplane class of $\PP(\Omega_{\bP^n}(1))$, consistently with our notation.

We have
\[ \sigma_m(\beta_1,\dots,\beta_n)=(-1)^m c_m(\Omega_{\bP^n}(1)),\]
and we computed before the term on the right. Set $e_i=d_i-1$, then we have
\begin{align*}
    \sigma_m(e_i\beta_1+\beta_1,\dots,e_i\beta_1+\beta_n) &= \sum_{|I|=m} \prod_{i_k\in I} (e_i\beta_1+\beta_{i_k}) \\
    &= \sum_{|I|=m} \sum_{\ell =0}^{m} e_i^\ell\beta_1^\ell\sigma_{m-\ell}(\beta_{i_1},\dots,\beta_{i_m}) \\
    &= \sum_{\ell=0}^{m} e_i^{\ell}\beta_1^\ell \left(\sum_{|I|=m} \sigma_{m-\ell}(\beta_{i_1},\dots,\beta_{i_m})\right) \\
    &= \sum_{\ell=0}^{m} e_i^{\ell}\beta_1^\ell \left(\binom{n-m+\ell}{\ell} \sigma_{m-\ell}(\beta_1,\dots,\beta_n)\right)\\
    &= \sum_{\ell=0}^{m} e_i^{\ell}\binom{n-m+\ell}{\ell}  (-1)^{m-\ell} c_{m-\ell}(\Omega_{\bP^n}(1)) \beta_1^\ell \\
    &= \sum_{\ell=0}^{m} \sum_{j=0}^{m-\ell} (-1)^{m+j-\ell} e_i^{\ell}\binom{n-m+\ell}{\ell} \beta_1^\ell \xi_1^j c_{m-\ell-j}.
\end{align*}
From this we deduce
\begin{align*}
    C_n(k_1,k_2) &= \sum_{\ell_1=0}^{n-k_1} \sum_{j_1=0}^{n-k_1-\ell_1} (-1)^{n-k_1+j_1-\ell_1} e_1^{\ell_1}\binom{k_1+\ell_1}{\ell_1} \beta_1^{\ell_1} \xi_1^{j_1} c_{n-k_1-\ell_1-j_1}\\
    &\cdot \sum_{\ell_2=0}^{n-k_2} \sum_{j_2=0}^{n-k_2-\ell_2} (-1)^{n-k_2+j_2-\ell_2} e_2^{\ell_2}\binom{k_2+\ell_2}{\ell_2} \beta_1^{\ell_2} \xi_1^{j_2} c_{n-k_2-\ell_2-j_2}.
\end{align*}
Combining these computations with \Cref{thm:chow}, we deduce the following:
\begin{proposition}\label{prop:rel cod 2}
Set $\bfd=(d_1,d_2)$. Then we have
\[\ch(\cM^{\PGL}_n(\bfd))\simeq\mathbb{Q}[\gamma_1,\gamma_2,c_2,\cdots,c_{n+1}]^{\mathfrak{S}_{\bfd}}/I \]
where $I$ is generated by the following cycles: for fixed $a$ and $b$ with $0\leq a\leq n-1$ and $0\leq b\leq n$, we have
\begin{align*}
    \sum_{k_1+k_2\leq a+b+1} &\gamma_1^{k_1}\gamma_2^{k_2} ( \sum_{\bfell+\bfj\leq n-\bfk} D(\bfk,\bfj,\bfell) c_{n-k_1-\ell_1-j_1}c_{n-k_2-\ell_2-j_2} \\
    &\cdot  \left[ s_{\ell_1+\ell_2+a-n+1}(V)s_{j_1+j_2+b-n}(V^\vee) + s_{\ell_1+\ell_2+a-n}(V)s_{j_1+j_2+b-n+1}(V^\vee) \right] ). 
\end{align*}
where $D(\bfk,\bfj,\bfell):= (-1)^{j_1+j_2-k_1-k_2-\ell_1-\ell_2}e_1^{\ell_1}e_2^{\ell_2}\binom{k_1+\ell_1}{\ell_1}\binom{k_2+\ell_2}{\ell_2}$.
\end{proposition}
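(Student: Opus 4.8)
The plan is to specialize Corollary~\ref{cor:chow PGL} to the case $r=2$, so that $s=n-r+2=n$, and then substitute the explicit formulas established in the preceding two subsections. By Corollary~\ref{cor:chow PGL} we have $\ch(\cM^{\PGL}_n(\bfd))\simeq\QQ[c_1,\dots,c_{n+1},\gamma_1,\gamma_2]^{\mathfrak{S}_{\bfd}}/(R,c_1)$; imposing $c_1=0$ turns the ring of coefficients into $\QQ[\gamma_1,\gamma_2,c_2,\dots,c_{n+1}]^{\mathfrak{S}_{\bfd}}$, and $I$ is the image of $R$. By Theorem~\ref{thm:chow}, $R$ is generated by the cycles $\sum_{0\le k_1,k_2\le n}\gamma_1^{k_1}\gamma_2^{k_2}\,\pi_*\bigl(C_n(k_1,k_2)\,P\bigr)$ where $P$ ranges over a set of module generators of $\ch_{\GL_{n+1}}({\rm Fl}_{n,n+1})$ over $\ch(\cB\GL_{n+1})$. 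Instead of the generators $\beta_1,b_1,\dots,b_{s-1}$ used in the statement of Theorem~\ref{thm:chow}, I would switch to the generators $\beta_1,\xi_1$ of the flag variety from \S5.1: since ${\rm Fl}_{n,n+1}\simeq\PP(\Omega_{\bP^n}(1))\to\bP^n$ is an iterated projective bundle, the monomials $\beta_1^a\xi_1^b$ with $0\le a\le n-1$ and $0\le b\le n$ form such a module basis, so these furnish an equivalent generating set for $R$. This accounts for the two index ranges $0\le a\le n-1$, $0\le b\le n$ in the statement.

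Next I would substitute the expansion of $\sigma_m(e_i\beta_1+\beta_1,\dots,e_i\beta_1+\beta_n)$ in terms of $\beta_1,\xi_1$ and the $c_j$ that was derived just above — this rests on $\sigma_m(\beta_1,\dots,\beta_n)=(-1)^m c_m(\Omega_{\bP^n}(1))$, the identity $\sum_{|I|=m}\sigma_{m-\ell}(\beta_{i_1},\dots,\beta_{i_m})=\binom{n-m+\ell}{\ell}\sigma_{m-\ell}(\beta_1,\dots,\beta_n)$, and $c_m(\Omega_{\bP^n}(1))=\sum_i(-1)^i\xi_1^i c_{m-i}$ — into the two factors of $C_n(k_1,k_2)$. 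Multiplying the two factors together gives exactly the displayed double sum for $C_n(k_1,k_2)$, namely a sum over $(\ell_1,j_1,\ell_2,j_2)$ of $D(\bfk,\bfj,\bfell)\,\beta_1^{\ell_1+\ell_2}\xi_1^{j_1+j_2}c_{n-k_1-\ell_1-j_1}c_{n-k_2-\ell_2-j_2}$ with $\ell_i+j_i\le n-k_i$, i.e.\ the constraint written $\bfell+\bfj\le n-\bfk$ in the statement.

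Finally I would push forward along $\pi\colon{\rm Fl}_{n,n+1}\to\spec k$. The classes $c_j$ come from $\ch(\cB\GL_{n+1})$, hence from the base, so by the projection formula they pass through $\pi_*$; it remains to compute $\pi_*(\beta_1^{\ell_1+\ell_2+a}\xi_1^{j_1+j_2+b})$, for which I would invoke the pushforward formula $\pi_*(\beta_1^{a}\xi_1^{b})=s_{a-n+1}(V)s_{b-n}(V^\vee)+s_{a-n}(V)s_{b-n+1}(V^\vee)$ obtained in \S5.1 from the factorization through $\bP^n=\PP(V^\vee)$ together with $s_i(\Omega_{\bP^n}(1))=s_i(V)+s_{i-1}(V)\xi_1$. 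Assembling these pieces produces precisely the stated generators of $I$. As for the summation bound $k_1+k_2\le a+b+1$: a Segre class of strictly negative degree vanishes, so since $\ell_i+j_i\le n-k_i$ a nonzero contribution forces $(n-1-a)+(n-b)\le 2n-k_1-k_2$ (and likewise for the second Segre product), which is exactly $k_1+k_2\le a+b+1$. The whole argument is a substitution followed by a pushforward, with no new input beyond \S5.1; the one point requiring care — and the place an error is most likely to creep in — is the bookkeeping, keeping the four indices $(\ell_1,j_1,\ell_2,j_2)$, the binomial factors and the sign $D(\bfk,\bfj,\bfell)$ straight through the two steps.
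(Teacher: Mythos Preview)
Your proposal is correct and follows exactly the route the paper takes: the paper's proof is literally ``Combining these computations with \Cref{thm:chow}, we deduce the following,'' and you have spelled out precisely what that combination entails. The only thing you make explicit that the paper leaves implicit is the passage from the module generators $\beta_1^a b_1^{i_1}\cdots b_{s-1}^{i_{s-1}}$ of Theorem~\ref{thm:chow} to the generators $\beta_1^a\xi_1^b$ adapted to the iterated projective bundle structure of ${\rm Fl}_{n,n+1}$, and your justification for the bound $k_1+k_2\le a+b+1$ via vanishing of negative-degree Segre classes is accurate.
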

Observe that the relations appearing above have degree $a+b+1$, so in particular the ideal of relations is generated in degree $d$ by $d$ relations.

\subsection{Some computations}
We already know that in degree one we have the single relation
\[ r_1= A_{1,0}\gamma_1 + A_{0,1}\gamma_2. \]
For $e_1<e_2$, we have
\begin{align*}
    &A_{1,0}=(e_2+1)\frac{e_2(e_2^n-e_1^n)+ne_1^n(e_1-e_2)}{(e_1-e_2)^2} \\
    &A_{0,1}=(e_1+1)\frac{e_1(e_1^n-e_2^n)+ne_2^n(e_2-e_1)}{(e_2-e_1)^2},
\end{align*}
whereas for $e_1=e_2=e$ we have
\[A_{1,0}=A_{0,1}=e^{n-1}(e+1)\frac{n(n+1)}{2}. \]
In degree two, we have two relations, given by computing \Cref{prop:rel cod 2} for $(a,b)=(1,0)$ and $(a,b)=(0,1)$ respectively:
\begin{align*}
  r_2^{(1,0)}=  B_{2,0}\gamma_1^2 + B_{1,1}\gamma_1\gamma_2 + B_{0,2}\gamma_2^2 + B_{0,0}c_2 \\
  r_2^{(0,1)}=  C_{2,0}\gamma_1^2 + C_{1,1}\gamma_1\gamma_2 + C_{0,2}\gamma_2^2 + C_{0,0}c_2.
\end{align*}
The term $B_{2,0}$ is given by
\begin{align*}  &\sum_{\bfell+\bfj\leq n-(2,0)} D((2,0),\bfj,\bfell) c_{n-2-\ell_1-j_1}c_{n-\ell_2-j_2} \\
    &\cdot \left[ s_{\ell_1+\ell_2-n+2}(V)s_{j_1+j_2-n}(V^\vee) + s_{\ell_1+\ell_2+1-n}(V)s_{j_1+j_2-n+1}(V^\vee) \right] 
\end{align*}
and the coefficient is non-zero only when one of the two following set of equations is satisfied
\begin{equation*}
    \begin{matrix}
    \ell_1+j_1=n-2 & & & \ell_1+j_1=n-2 \\
    \ell_2+j_2=n & & & \ell_2+j_2=n\\
    \ell_1+\ell_2=n-2 & & & \ell_1+\ell_2=n-1 \\
    j_1+j_2=n & & & j_1+j_2=n-1. \\
    \end{matrix}
\end{equation*}
After a straightforward computation we get
\begin{align*}
    B_{2,0}&=(e_2+1)\sum_{\ell_1=0}^{n-2}e_1^{\ell_1}e_2^{n-2-\ell_1}\binom{\ell_1+2}{\ell_1}\\
    &=(e_2+1)\frac{n^2e_1^{n-1}(e_1-e_2)^2 + 2e_2(e_1^n-e_2^n)-ne_1^{n-1}(e_1^2-e_2^2)}{2(e_1-e_2)} \nonumber
\end{align*}
for $e_1<e_2$, whereas for $e_1=e_2=e$ we have
\begin{align*}
    B_{2,0}=e^{n-2}(e+1)\frac{(n-1)(n+1)n}{6}.
\end{align*}
The term $B_{0,2}$ is given by 
\begin{align*}  &\sum_{\bfell+\bfj\leq n-(0,2)} D((0,2),\bfj,\bfell) c_{n-\ell_1-j_1}c_{n-2-\ell_2-j_2} \\
    &\cdot \left[ s_{\ell_1+\ell_2-n+2}(V)s_{j_1+j_2-n}(V^\vee) + s_{\ell_1+\ell_2+1-n}(V)s_{j_1+j_2-n+1}(V^\vee) \right] 
\end{align*}
and the coefficient is non-zero only when one of the two following set of equations is satisfied
\begin{equation}
    \begin{matrix}
    \ell_1+j_1=n & & & \ell_1+j_1=n \\
    \ell_2+j_2=n-2 & & & \ell_2+j_2=n-2\\
    \ell_1+\ell_2=n-2 & & & \ell_1+\ell_2=n-1 \\
    j_1+j_2=n & & & j_1+j_2=n-1. \\
    \end{matrix}
\end{equation}
After a straightforward computation we get
\begin{align*}
    B_{0,2}&=(e_1+1)\sum_{\ell_2=0}^{n-2}e_1^{n-2-\ell_2}e_2^{\ell_2}\binom{\ell_2+2}{\ell_2}\\
    &=(e_1+1)\frac{n^2e_2^{n-1}(e_1-e_2)^2 + 2e_1(e_2^n-e_1^n)-ne_2^{n-1}(e_2^2-e_1^2)}{2(e_2-e_1)}
\end{align*}
for $e_1<e_2$, and for $e_1=e_2=e$ we have $B_{2,0}=B_{0,2}$.
The term $B_{1,1}$ is given by 
\begin{align*}  &\sum_{\bfell+\bfj\leq n-(1,1)} D((1,1),\bfj,\bfell) c_{n-1-\ell_1-j_1}c_{n-1-\ell_2-j_2} \\
    &\cdot \left[ s_{\ell_1+\ell_2-n+2}(V)s_{j_1+j_2-n}(V^\vee) + s_{\ell_1+\ell_2+1-n}(V)s_{j_1+j_2-n+1}(V^\vee) \right] 
\end{align*}
and the coefficient is non-zero only when one of the two following set of equations is satisfied
\begin{equation*}
    \begin{matrix}
    \ell_1+j_1=n-1 & & & \ell_1+j_1=n-1 \\
    \ell_2+j_2=n-1 & & & \ell_2+j_2=n-1\\
    \ell_1+\ell_2=n-2 & & & \ell_1+\ell_2=n-1 \\
    j_1+j_2=n & & & j_1+j_2=n-1. \\
    \end{matrix}
\end{equation*}
After a straightforward computation we get
\begin{align*}
    B_{1,1}&=\sum_{\ell_1=0}^{n-2}e_1^{\ell_1}e_2^{n-2-\ell_1}(\ell_1+1)(n-\ell_1-1) + \sum_{\ell_1=0}^{n-1}e_1^{\ell_1}e_2^{n-1-\ell_1}(\ell_1+1)(n-\ell_1)\\
    &=(-n e_2 e_1^{n + 1} - 2 e_2 e_1^{n + 1} + n e_1^{n + 2} + n e_1 e_2^{n + 1} + 2 e_1 e_2^{n + 1} - n e_2^{n + 2})/(e_1 - e_2)^3\\
    &+ (n e_1^{n + 1} - e_1^{n + 1} - n e_2 e_1^n - e_2 e_1^n + n e_1 e_2^n + e_1 e_2^n - n e_2^{n + 1} + e_2^{n + 1})/(e_1 - e_2)^3 
\end{align*}
for $e_1<e_2$, whereas for $e_1=e_2=e$ we have
\begin{align*}
    B_{1,1}=\frac{(n+1)ne^{n-2}}{6}(n-1+e(n+2)).
\end{align*}
The terms $C_{2,0}$, $C_{0,2}$ and $C_{1,1}$ are computed in a similar way but with $a=0$ and $b=1$. The final result for $e_1<e_2$ is as follows:
\begin{align*}
    C_{2,0}&=e_2(e_2+1)\sum_{\ell_1=0}^{n-2}e_1^{\ell_1}e_2^{n-2-\ell_1}\binom{\ell_1+2}{\ell_1}\\
    &=e_2(e_2+1)\frac{n^2e_1^{n-1}(e_1-e_2)^2 + 2e_2(e_1^n-e_2^n)-ne_1^{n-1}(e_1^2-e_2^2)}{2(e_1-e_2)}=e_2B_{2,0}
\end{align*}
\begin{align*}
    C_{0,2}&=e_1(e_1+1)\sum_{\ell_2=0}^{n-2}e_1^{n-2-\ell_2}e_2^{\ell_2}\binom{\ell_2+2}{\ell_2}\\
    &=e_1(e_1+1)\frac{n^2e_2^{n-1}(e_1-e_2)^2 + 2e_1(e_2^n-e_1^n)-ne_2^{n-1}(e_2^2-e_1^2)}{2(e_2-e_1)}=e_1B_{0,2}
\end{align*}
\begin{align*}
    C_{1,1}&=e_1\sum_{\ell_1=0}^{n-2}e_1^{\ell_1}e_2^{n-2-\ell_1}(\ell_1+2)(n-\ell_1) + \sum_{\ell_1=0}^{n-1}e_1^{\ell_1}e_2^{n-1-\ell_1}(\ell_1+1)(n-\ell_1) \\
    &=\frac{-3 e_2 e_1^{n + 1} + e_2^2 e_1^n - e_1^3 e_2^{n-1} + 3 e_1^2 e_2^n - n (e_1 - e_2) (-2e_1^{n + 1} + e_2 e_1^n + e_1^2 e_2^{n-1} - 2 e_1 e_2^{n })}{(e_1 - e_2)^3} \\
    &+ \frac{n e_1^{n + 1} - e_1^{n + 1} - n e_2 e_1^n - e_2 e_1^n + n e_1 e_2^n + e_1 e_2^n - n e_2^{n + 1} + e_2^{n + 1}}{(e_1 - e_2)^3 }.
\end{align*}
For $e_1=e_2=e$, we have
\begin{align*}
    &C_{2,0}=eB_{2,0}=eB_{0,2}=C_{0,2} \\
    &C_{1,1}=\frac{e^{n-1}}{6}((n+1)(2n^2+7n-6)).
\end{align*}

Let us compute the coefficient in front of $c_{d+1}$ in the relation \Cref{prop:rel cod 2} for $0\leq b\leq d$ (hence $a=d-b$). There are eight set of equations whose resulting values for $\bfell$ and $\bfj$ contribute to $c_{d+1}$. The first four sets are
\begin{align*}
    &\begin{cases}
    \ell_1+j_1=n-d-1\\
    \ell_2+j_2=n\\
    \ell_1+\ell_2=n-a-1\\
    j_1+j_2=n-b
    \end{cases}
    &&\begin{cases}
     \ell_1+j_1=n-d-1 \\
     \ell_2+j_2=n \\
     \ell_1+\ell_2=n-a \\
     j_1+j_2=n-b-1
    \end{cases}\\
    &\begin{cases}
     \ell_1+j_1=n \\
     \ell_2+j_2=n-d-1  \\
     \ell_1+\ell_2=n-a-1\\
     j_1+j_2=n-b
    \end{cases}
    &&\begin{cases}
    \ell_1+j_1=n \\
    \ell_2+j_2=n-d-1 \\
    \ell_1+\ell_2=n-a \\
    j_1+j_2=n-b-1.
    \end{cases}
\end{align*}
The four other contributions come from the Segre classes $s_{d+1}(V)=-c_{d+1}$ and $s_{d+1}(V^{\vee})=(-1)^dc_{d+1}$ appearing in \Cref{prop:rel cod 2}. The possible values for $\bfell$ and $\bfj$ are the ones that satisfy one of these four systems
\begin{align*}
    &\begin{cases}
    \ell_1+j_1=n \\
    \ell_2+j_2=n   \\
    \ell_1+\ell_2=n-a+d  \\
    j_1+j_2=n-b
    \end{cases}
    &&\begin{cases}
    \ell_1+j_1=n \\
    \ell_2+j_2=n  \\
    \ell_1+\ell_2=n-a-1 \\
    j_1+j_2=n+d-b+1
    \end{cases}\\
    &\begin{cases}
     \ell_1+j_1=n\\
     \ell_2+j_2=n\\
     \ell_1+\ell_2=n+d-a+1\\
      j_1+j_2=n-b-1
    \end{cases}
    &&\begin{cases}
     \ell_1+j_1=n \\
     \ell_2+j_2=n \\
     \ell_1+\ell_2=n-a \\
     j_1+j_2=n+d-b.
    \end{cases}
\end{align*}
Putting all together, we get that
\begin{align}\label{eq:C(d,b)}
    C(d,b)_{0,0}=&(-1)^{d-1}(e_2^b(e_2+1)+e_1^b(e_1+1)) \sum_{\ell_1=0}^{n-d-1}e_1^{\ell_1}e_2^{n-d-1-\ell_1} \nonumber\\
    &- e_1^be_2^b \sum_{\ell_1=0}^{n-b}e_1^{\ell_1}e_2^{n-b-\ell_1}+ (-1)^{d}\sum_{\ell_1=0}^{n-d+b-1}e_1^{\ell_1}e_2^{n-d+b-1-\ell_1}\\
    &- e_1^{b+1}e_2^{b+1}\sum_{\ell_1=0}^{n-b-1}e_1^{\ell_1}e_2^{n-b-1-\ell_1} + (-1)^{d}\sum_{\ell_1=0}^{n-d+b}e_1^{\ell_1}e_2^{n-d+b-\ell_1} \nonumber.
\end{align}
Assuming $e_1<e_2$, after some simplifications, we get
\begin{equation}\label{eq:C(d,b) first}
    C(d,b)_{0,0} = \frac{e_1^b(e_1+1)e_2^{n-d}(e_2^{d+1}+(-1)^d) - e_2^b(e_2+1)e_1^{n-d}(e_1^{d+1}+(-1)^d)}{e_1-e_2}.
\end{equation}
Assuming $e_1=e_2=e$, then after further simplifications we can rewrite (\ref{eq:C(d,b)}) as follows:
\begin{align}\label{eq:C(d,b) second}
   C(d,b)_{0,0} =  e^{n-d+b}(&be(e+1)(e^d+(-1)^d)-(n(e+1)+1)e^{d+1} \nonumber \\
   &+(-1)^{d-1}((n-d)(e+1)+2n-1)e +(-1)^d2(n-d)). 
\end{align}
In particular, with the computations we have done so far we are able to write down an explicit formula for two quantities which will be relevant for the main result of this Section.

Set $B_{0,0}:=C(1,0)_{0,0}$ and $C_{0,0}:=C(1,1)_{0,0}$. The first polynomial we consider is 
\begin{align}\label{eq:det imp}
    (A_{0,1}^2B_{2,0} - A_{1,0}A_{0,1}B_{1,1} + A_{1,0}^2B02)C_{0,0} \\
    - (A_{0,1}^2C_{2,0} - A_{1,0}A_{0,1}C_{1,1} + A_{1,0}^2C_{0,2})B_{0,0}.\nonumber
\end{align}
when $e_1<e_2$. Using the formulas we determined so far, we deduce an explicit expression for (\ref{eq:det imp}), which is
\begin{multline*}
    (1/(2 (e_1 - e_2)^8))(1 + e_1) (1 + e_2) ((1/
   e_2)(e_1^{-1 + n} (-1 + e_1^2) (1 + e_2) \\
   - (1 + e_1) e_2^{-1 + 
        n} (-1 + e_2^2)) ((1/
      e_1)(1 + e_1) (e_1 - e_2)^2 e_2^2 (e_1^{1 + n}\\
      + e_2^{1 + n} n - 
         e_1 e_2^n (1 + n))^2 (-2 e_1 e_2^{1 + n} + 
         e_1^{2 + n} (-1 + n) n + \\
         e_1^n e_2^2 n (1 + n) - 
         2 e_1^{1 + n} e_2 (-1 + n^2)) \\
         -e_1 (e_1 - e_2)^2 (1 + e_2) (e_2^{1 + n} + e_1^{1 + n} n - 
         e_1^n e_2 (1 + n))^2 (-2 e_1^{1 + n} e_2\\
         + e_2^{2 + n} (-1 + n) n + e_1^2 e_2^n n (1 + n) - 
         2 e_1 e_2^{1 + n} (-1 + n^2)) \\
         - 2 (e_2^{1 + n} + e_1^{1 + n} n - e_1^n e_2 (1 + n)) (e_1^{1 + n} + 
         e_2^{1 + n} n - e_1 e_2^n (1 + n)) (-e_2^{2 + n} (-1 + n)\\
         + 2 e_1^{2 + n} e_2 n + e_1^n (-1 + e_2) e_2^2 (1 + n) - 
         e_1^3 e_2^n (1 + n) + 3 e_1^2 e_2^{1 + n} (1 + n)\\
         -e_1 e_2^{1 + n} (-1 + (-1 + 2 e_2) n) - 
         e_1^{1 + n} e_2 (1 - n + 3 e_2 (1 + n)))) - (e_1^{-1 + 
        n} (-1 + e_1^2) e_2 (1 + e_2) \\
        - e_1 (1 + e_1) e_2^{-1 + n} (-1 + e_2^2)) ((1/
      e_1)(1 + e_1) (e_1 - e_2)^2 (e_1^{1 + n} + e_2^{1 + n} n \\
      -e_1 e_2^n (1 + n))^2 (-2 e_1 e_2^{1 + n} + 
         e_1^{2 + n} (-1 + n) n + e_1^n e_2^2 n (1 + n) \\
         -2 e_1^{1 + n} e_2 (-1 + n^2)) - (1/
      e_2)(e_1 - e_2)^2 (1 + e_2) (e_2^{1 + n} + e_1^{1 + n} n - 
         e_1^n e_2 (1 + n))^2 (-2 e_1^{1 + n} e_2 \\
         +e_2^{2 + n} (-1 + n) n + e_1^2 e_2^n n (1 + n) - 
         2 e_1 e_2^{1 + n} (-1 + n^2)) \\
         + 2 (e_2^{1 + n} + e_1^{1 + n} n - e_1^n e_2 (1 + n)) (e_1^{1 + n} \\
      +  e_2^{1 + n} n - e_1 e_2^n (1 + n)) (-e_1^{2 + n} n + 
         e_1^n e_2 (1 + n) + e_2^{1 + n} (-1 + n + e_2 n) \\
         + e_1^{1 + n} (1 - n + e_2 (2 + n)) - 
         e_1 e_2^n (1 + n + e_2 (2 + n)))))
\end{multline*}
The second polynomial is 
\begin{align}\label{eq:det simple}
    (B_{1,1}-2B_{2,0})C_{0,0}-(C_{1,1}-2C_{2,0})B_{0,0}
\end{align}
when $e_1=e_2=e$. After some computations, we get that this is equal to
\begin{multline*}
    \frac{e^{2n-2}}{6}(e^4 n (4 + n - 4 n^2 - n^3) \\
    + e^3 n (-9 - 4 n + 6 n^2 + n^3) + 
 e (-12 + 25 n + 14 n^2 - 28 n^3 - 5 n^4) \\
 +  2 (6 - 8 n - 7 n^2 + 8 n^3 + n^4) + 
 e^2 (-12 - 4 n + 15 n^2 + 10 n^3 + 3 n^4))
\end{multline*}
\subsection{Main results}
We are ready to state the main results of the Section.
\begin{theorem}\label{thm:chow cod 2 first}
Let $n\geq 3$ and $d_1>d_2\geq 2$ be integers such that the quantity (\ref{eq:det imp}) for $e_i=d_i-1$ is not zero. Then
\[\ch(\cM^{\PGL}_n(\bfd))\simeq \mathbb{Q}[\gamma_1]/(\gamma_1^2). \]
\end{theorem}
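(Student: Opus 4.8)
The plan is to start from \Cref{prop:rel cod 2}: since $d_1>d_2$ the group $\mathfrak{S}_{\bfd}$ is trivial, so $\ch(\cM^{\PGL}_n(\bfd))\simeq\QQ[\gamma_1,\gamma_2,c_2,\dots,c_{n+1}]/I$ with $I$ generated in each degree $\delta$ by the $\delta$ cycles $r_\delta^{(a,b)}$, $a+b+1=\delta$, whose relevant coefficients have been written out above. I will show successively that $\gamma_2$ becomes a multiple of $\gamma_1$, then $\gamma_1^2$ and $c_2$ vanish, then all remaining $c_i$ vanish, while $\gamma_1$ itself survives.

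In degree one there is a single relation $r_1=A_{1,0}\gamma_1+A_{0,1}\gamma_2$. First I would check that $A_{0,1}\neq 0$ for all integers $e_1>e_2\geq 1$: up to the positive factor $(e_2-e_1)^{-2}$ its numerator is $(e_1+1)\bigl(e_1^{n+1}-(n+1)e_1e_2^{n}+ne_2^{n+1}\bigr)$, and the polynomial $x\mapsto x^{n+1}-(n+1)e_2^n x+ne_2^{n+1}$ has a strict minimum at $x=e_2$ with value $0$ and is increasing for $x>e_2$, so $A_{0,1}>0$. Hence $r_1\neq 0$, and modulo $r_1$ we may eliminate $\gamma_2\equiv\lambda\gamma_1$ with $\lambda=-A_{1,0}/A_{0,1}$, leaving a quotient of $\QQ[\gamma_1,c_2,\dots,c_{n+1}]$ in which $\ch^1$ is at most one-dimensional, spanned by $\gamma_1$.

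Next, in degree two, substituting $\gamma_2=\lambda\gamma_1$ in $r_2^{(1,0)}$ and $r_2^{(0,1)}$ and clearing $A_{0,1}^2$ turns the two relations into a $2\times 2$ linear system in $\gamma_1^2$ and $c_2$; its determinant is exactly $A_{0,1}^2$ times the quantity (\ref{eq:det imp}), which is nonzero by the hypothesis of the theorem together with the previous step, so $\gamma_1^2=0$ and $c_2=0$. For degrees $\delta\geq 3$ I would argue by induction: assuming $\ch^{j}=0$ (hence $c_j=0$) for $2\leq j\leq\delta-1$, and using $\gamma_1^2=0$, $\gamma_2=\lambda\gamma_1$ (so $\gamma_1\gamma_2=\gamma_2^2=0$), one sees that any monomial of degree $\delta$ in the generators either involves two or more classes among $\gamma_1,\gamma_2$ (hence vanishes), or involves exactly one of them times a polynomial of positive degree in $c_2,\dots,c_{\delta-1}$ (hence vanishes), or is a polynomial purely in the $c_i$, which modulo $(c_2,\dots,c_{\delta-1})$ reduces to a multiple of $c_\delta$; thus $\ch^\delta$ is spanned by $\bar c_\delta$. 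Reducing $r_\delta^{(\delta-1-b,\,b)}$ modulo the inductive hypothesis kills its $\gamma$-part and collapses its $\gamma$-free part to $C(\delta-1,b)_{0,0}\,\bar c_\delta$, the coefficient of $c_\delta$ recorded in (\ref{eq:C(d,b) first}). So it suffices to exhibit, for each $\delta$ with $3\leq\delta\leq n+1$, an admissible $b$ (that is, $0\leq b\leq\delta-1$ and $\delta-1-b\leq n-1$) with $C(\delta-1,b)_{0,0}\neq 0$: for $\delta\leq n$ I would take $b=0$, whose numerator in (\ref{eq:C(d,b) first}) is $\bigl[(e_1+1)e_2^{n+1}-(e_2+1)e_1^{n+1}\bigr]+(-1)^{\delta-1}\bigl[(e_1+1)e_2^{n-\delta+1}-(e_2+1)e_1^{n-\delta+1}\bigr]$, and for $\delta=n+1$ I would take $b=1$, using $C(n,1)_{0,0}=\bigl(e_1(e_1+1)(e_2^{n+1}+(-1)^n)-e_2(e_2+1)(e_1^{n+1}+(-1)^n)\bigr)/(e_1-e_2)$; in each case one invokes that $x\mapsto(x+1)/x^k$ is strictly decreasing for $x\geq 1$, $k\geq 1$, together with a comparison of the leading powers of $e_1$, to conclude nonvanishing when $e_1>e_2\geq 1$. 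Degrees $\delta>n+1$ are automatic, since every degree-$\delta$ monomial in $c_2,\dots,c_{n+1}$ is a product of at least two Chern classes of index $\leq n+1\leq\delta-1$. This gives $\ch^\delta=0$ for all $\delta\geq 2$, while $\ch^1=\QQ\gamma_1\neq 0$ ($r_1$ being the only relation in degree one) and $\gamma_1^2=0$, so $\ch(\cM^{\PGL}_n(\bfd))\simeq\QQ[\gamma_1]/(\gamma_1^2)$.

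The main obstacle I anticipate is the last point: proving the unconditional nonvanishing of $A_{0,1}$ and of a suitable $C(\delta-1,b)_{0,0}$ at every admissible integer pair $e_1>e_2\geq 1$ and every $\delta$. These are explicit but unwieldy polynomials in $e_1,e_2,n$, and there does not appear to be a single clean identity; one has to run the monotonicity and leading-term estimates case by case, separating the parity of $\delta$ and watching the boundary case $e_2=1$. Everything else is bookkeeping with \Cref{prop:rel cod 2}.
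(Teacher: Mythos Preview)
Your proof is correct and follows the same three--step strategy as the paper: eliminate $\gamma_2$ via the degree-one relation, kill $\gamma_1^2$ and $c_2$ via the $2\times 2$ system in degree two (whose determinant is exactly the hypothesis), and then kill $c_3,\dots,c_{n+1}$ by induction using the coefficients $C(d,b)_{0,0}$.

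The one substantive difference is in the inductive step. You fix a particular $b$ (namely $b=0$ for $\delta\le n$ and $b=1$ for $\delta=n+1$) and then argue nonvanishing of $C(\delta-1,b)_{0,0}$ via monotonicity estimates on $(x+1)/x^k$ and related quantities; as you anticipated, this forces some case analysis on the parity of $d$ and on whether $e_2=1$. The paper avoids all of this with a one--line observation: writing $C(d,b)_{0,0}=(e_1^bE-e_2^bF)/(e_1-e_2)$ with $E=(e_1+1)e_2^{n-d}(e_2^{d+1}+(-1)^d)$ and $F=(e_2+1)e_1^{n-d}(e_1^{d+1}+(-1)^d)$, one has $E\neq 0$ (since the larger of $e_1,e_2$ is $\ge 2$), so $C(d,b)_{0,0}=0$ forces $(e_1/e_2)^b=F/E$, which has at most one integer solution $b$. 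Since there are $d+1\ge 3$ admissible values of $b$, some coefficient is nonzero and $c_{d+1}=0$. This replaces your case--by--case estimates entirely.

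Conversely, you explicitly verify $A_{0,1}\neq 0$ before dividing by it, a point the paper passes over in silence; your calculus argument (the function $x\mapsto x^{n+1}-(n+1)e_2^n x+ne_2^{n+1}$ has its unique minimum at $x=e_2$ with value $0$) is clean and worth keeping.
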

\begin{proof}
We know from \Cref{prop:rel cod 2} that in this case we have
\[ \ch(\cM^{\PGL}_n(\bfd))\simeq \mathbb{Q}[\gamma_1,\gamma_2,c_2,c_3,\dots,c_{n+1}]/I. \]
In degree $1$ we have the single relation
\[ r_1= A_{1,0}\gamma_1 + A_{0,1}\gamma_2. \]
For $d_1<d_2$, we deduce from the relation $r_1$ that $\gamma_2=-(A_{1,0}/A_{0,1})\gamma_1$.
In degree $2$ we have the two relations
\begin{align*}
  r_2^{(1,0)}=  B_{2,0}\gamma_1^2 + B_{1,1}\gamma_1\gamma_2 + B_{0,2}\gamma_2^2 + B_{0,0}c_2 \\
  r_2^{(0,1)}=  C_{2,0}\gamma_1^2 + C_{1,1}\gamma_1\gamma_2 + C_{0,2}\gamma_2^2 + C_{0,0}c_2.
\end{align*}
Substituting $\gamma_2=-(A_{1,0}/A_{0,1})\gamma_1$, we get the following system of equations:
\begin{equation}\label{eq:fund matrix 1}
\left(\begin{matrix}
  B_{2,0}-\frac{A_{1,0}}{A_{0,1}}B_{1,1}+\frac{A_{1,0}^2}{A_{0,1}^2}B_{0,2} & B_{0,0} \\
  C_{2,0}-\frac{A_{1,0}}{A_{0,1}}C_{1,1}+\frac{A_{1,0}^2}{A_{0,1}^2}C_{0,2} & C_{0,0}
\end{matrix}\right)
\left(\begin{matrix}
  \gamma_1^2 \\
  c_2
\end{matrix}\right)
= 0.
\end{equation}
If the determinant of the matrix appearing in (\ref{eq:fund matrix 1}) is non-zero, we have $\gamma_1^2=c_2=0$.
It's straightforward to check that this condition is equivalent to the quantity (\ref{eq:det imp}) being non-zero.
We are left with proving that $c_i=0$ for $i\geq 3$.

Using the explicit expression obtained in (\ref{eq:C(d,b) first}), for $e_1<e_2$ we have that the solutions to $C(d,b)_{0,0}=0$, regarded as an equation in one variable $b$, are the same as the solutions to the equation
\begin{align*}
     e_1^b(e_1+1)e_2^{n-d}(e_2^{d+1}+(-1)^d) - e_2^b(e_2+1)e_1^{n-d}(e_1^{d+1}+(-1)^d)=e_1^b E - e_2^b F=0.
\end{align*}
As $e_2>e_1\geq 1$, we have that $E\neq 0$, hence $(e_1/e_2)^b=F/E$. In particular, there is at most one integer $b$ for which this equation is satisfied.

Now we prove by induction that $c_i=0$ for $i\geq 3$, the first case being $i=3$: \Cref{prop:rel cod 2} combined with the fact that $\gamma_1^{k_1}\gamma_2^{k_2}=0$ for $k_1+k_2\geq 3$ implies that we have three relations
\[ C(2,0)_{0,0}c_3=C(2,1)_{0,0}c_3=C(2,2)_{0,0}c_3=0. \]
We have just seen that there is at most one value of $b$ for which $C(2,b)_{0,0}=0$: this immediately implies that $c_3=0$ in the rational Chow ring.

The inductive step proceeds along the same lines: assuming that $c_i=0$ for $i=1,2,\cdots, d$, using again the fact that $\gamma_1^{k_1}\gamma_2^{k_2}=0$ for $k_1+k_2\geq 3$, we get that the relations given in \Cref{prop:rel cod 2} are 
\[ C(d,b)_{0,0}c_{d+1}=0,\quad b=0,\dots,d. \]
The same argument used before shows that there is at most one value of $b$ for which $C(d,b)_{0,0}=0$, which readily implies that $c_{d+1}=0$.
\end{proof}

\begin{theorem}\label{thm:chow cod 2 second}
Let $n\geq 3$ and $d_1=d_2\geq 2$ be integers such that the quantity (\ref{eq:det simple}) for $e=d_1-1$ is not zero. Then
\[\ch(\cM^{\PGL}_n(\bfd))\simeq \mathbb{Q}. \]
\end{theorem}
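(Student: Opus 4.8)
The plan is to mirror the proof of \Cref{thm:chow cod 2 first}, replacing its ``at most one bad value of $b$'' arguments by their equal-degree analogues. By \Cref{prop:rel cod 2} we start from
\[\ch(\cM^{\PGL}_n(\bfd))\simeq\mathbb{Q}[\gamma_1,\gamma_2,c_2,\dots,c_{n+1}]^{\mathfrak{S}_2}/I,\]
where the symmetric part in $\gamma_1,\gamma_2$ is generated by $a_{1,1}=\gamma_1+\gamma_2$ and $a_{1,2}=\gamma_1\gamma_2$. First I would read off the unique degree-one relation $r_1=A_{1,0}\gamma_1+A_{0,1}\gamma_2$; since $e_1=e_2=e$ gives $A_{1,0}=A_{0,1}=e^{n-1}(e+1)n(n+1)/2>0$ for $e\geq1$ and $n\geq3$, this relation reads $A_{1,0}\,a_{1,1}=0$, hence $a_{1,1}=0$, and the ring becomes $\mathbb{Q}[a_{1,2},c_2,\dots,c_{n+1}]/\overline I$.

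Next I would feed in the two degree-two relations $r_2^{(1,0)}$ and $r_2^{(0,1)}$. Using $B_{2,0}=B_{0,2}$ and $C_{2,0}=C_{0,2}$ together with $\gamma_1^2+\gamma_2^2=a_{1,1}^2-2a_{1,2}=-2a_{1,2}$ and $\gamma_1\gamma_2=a_{1,2}$, these collapse to the homogeneous linear system
\[\begin{pmatrix} B_{1,1}-2B_{2,0} & B_{0,0}\\ C_{1,1}-2C_{2,0} & C_{0,0}\end{pmatrix}\begin{pmatrix} a_{1,2}\\ c_2\end{pmatrix}=0,\]
whose determinant $(B_{1,1}-2B_{2,0})C_{0,0}-(C_{1,1}-2C_{2,0})B_{0,0}$ is precisely the quantity (\ref{eq:det simple}). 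Under the hypothesis that this is nonzero, $a_{1,2}=c_2=0$. Since now $a_{1,1}=a_{1,2}=0$, every $\mathfrak{S}_2$-symmetric polynomial in $\gamma_1,\gamma_2$ of positive degree dies in the quotient; as $C_n(k_1,k_2)$ is symmetric in $(k_1,k_2)$ when $e_1=e_2$, each degree-$(d+1)$ relation of \Cref{prop:rel cod 2} is symmetric in $(\gamma_1,\gamma_2)$, so only its $(k_1,k_2)=(0,0)$ contribution survives and the remaining generators are $c_3,\dots,c_{n+1}$.

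These are cleared by the same induction as in \Cref{thm:chow cod 2 first}: assuming $c_2=\dots=c_d=0$ one is left, in degree $d+1$, with the relations $C(d,b)_{0,0}\,c_{d+1}=0$ over the admissible range of $b$ (of size $\min(d+1,n)\geq 3$), so it is enough that $C(d,b)_{0,0}\neq0$ for some $b$. I would read this off from (\ref{eq:C(d,b) second}): with $e_1=e_2=e$ one has $C(d,b)_{0,0}=e^{\,n-d+b}(\alpha b+\beta)$ where $\alpha=e(e+1)(e^d+(-1)^d)$; if $\alpha\neq0$ then $\alpha b+\beta$ vanishes for at most one $b$, and if $\alpha=0$ --- which forces $e=1$ and $d$ odd --- a direct substitution gives $C(d,b)_{0,0}=-2$ for all $b$. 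In every case some $b$ gives a nonzero coefficient, so $c_{d+1}=0$; letting $d$ run from $2$ to $n$ yields $\ch(\cM^{\PGL}_n(\bfd))\simeq\mathbb{Q}$. The only genuine work, as in the previous theorem, is to verify that the $2\times2$ determinant above equals the displayed polynomial (\ref{eq:det simple}) and that (\ref{eq:C(d,b) second}) does not vanish identically in $b$; both are elementary given the formulas already assembled in this Section, so I do not expect a serious obstacle beyond this bookkeeping.
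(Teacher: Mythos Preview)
Your proposal is correct and follows essentially the same route as the paper's own proof: kill $\gamma_1+\gamma_2$ from the degree-one relation, kill $\gamma_1\gamma_2$ and $c_2$ from the $2\times 2$ system whose determinant is (\ref{eq:det simple}), then run the same induction on $c_{d+1}$ using (\ref{eq:C(d,b) second}). Your treatment of the degenerate case $e=1$, $d$ odd (where you compute $C(d,b)_{0,0}=-2$) is slightly more explicit than the paper's ``it's straightforward to check'', but the argument is the same.
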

\begin{proof}
We know from \Cref{prop:rel cod 2} that for $d_1=d_2$ we have
\[ \ch(\cM^{\PGL}_n(\bfd))\simeq \mathbb{Q}[\gamma_1+\gamma_2,\gamma_1\gamma_2,c_2,c_3,\dots,c_{n+1}]/I. \]
If $d_1=d_2$, the only relation in degree one is
\[ A_{1,0}\gamma_1+A_{0,1}\gamma_2=A_{1,0}(\gamma_1+\gamma_2)=0 \]
which implies $\gamma_1+\gamma_2=0$. In degree two, using the fact that $B_{1,0}=B_{0,1}$ and $C_{1,0}=C_{0,1}$ we have
\begin{align*}
  r_2^{(1,0)}=  B_{2,0}(\gamma_1^2+\gamma_2^2) + B_{1,1}\gamma_1\gamma_2 + B_{0,0}c_2 \\
  r_2^{(0,1)}=  C_{2,0}(\gamma_1^2 + \gamma_2^2) + C_{1,1}\gamma_1\gamma_2 + C_{0,0}c_2.
\end{align*}
We can rewrite $\gamma_1^2+\gamma_2^2$ as $(\gamma_1+\gamma_2)^2-2\gamma_1\gamma_2$, hence we deduce the following two relations:
\begin{equation}\label{eq:fund matrix 2}
\left(\begin{matrix}
  B_{1,1}-2B_{2,0} & B_{0,0} \\
  C_{1,1}-2C_{2,0} & C_{0,0}
\end{matrix}\right)
\left(\begin{matrix}
  \sigma_2(\gamma_1,\gamma_2) \\
  c_2
\end{matrix}\right)
= 0.
\end{equation}
If the determinant of the matrix in (\ref{eq:fund matrix 2}) is non-zero, we deduce that $\gamma_1\gamma_2=c_2=0$. This condition is equivalent to the quantity (\ref{eq:det simple}) being non-zero.

To show that $c_d=0$ for $d\geq 3$, we use the same argument of the proof of \Cref{thm:chow cod 2 first}. In this case, from (\ref{eq:C(d,b) second}) we have that $C(d,b)_{0,0}=0$ if and only if 
\begin{align*}
    be(e+1)(e^d+(-1)^d)-(n(e+1)+1)e^{d+1}\\
    +(-1)^{d-1}((n-d)(e+1)+2n-1)e +(-1)^d2(n-d))=0.
\end{align*}
If $d$ is even or $e\neq 1$, then again there is at most one $b$ which solves the equation above; otherwise, for $d$ odd and $e=1$, it's straightforward to check that the expression above is non-zero. Then the induction argument used in the proof of \Cref{thm:chow cod 2 first} applies also here.
\end{proof}
\subsection{Some applications}
We give two immediate applications of \Cref{thm:chow cod 2 first}. In the first one, we reprove a result of Faber.
\begin{corollary}[\cite{Fab-m4}]\label{cor:chow M4}
Let $M_4$ be the moduli space of smooth curves of genus four. Then
\[ \ch(M_4)\simeq \mathbb{Q}[\lambda_1]/(\lambda_1^3) \]
where $\lambda_1$ is the first Chern class of the Hodge bundle.
\end{corollary}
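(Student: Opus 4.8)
The plan is to run the argument of \Cref{prop:chow M5}, with the trigonal locus in $\cM_5$ replaced by the hyperelliptic locus $H_4\subset M_4$. First I would apply \Cref{thm:chow cod 2 first} to $\cM^{\PGL}_3(2,3)$: by \Cref{rmk:examples} this is the stack of smooth non-hyperelliptic (equivalently trigonal) genus-four curves, realized by the canonical embedding in $\PP^3$ as a complete intersection of a quadric and a cubic. In the notation of \Cref{thm:chow cod 2 first} one has $n=3$, $d_1=3$, $d_2=2$, hence $e_1=2$, $e_2=1$ and $s=n=3$. The one preliminary step is to check that the quantity (\ref{eq:det imp}) does not vanish at $e_1=2$, $e_2=1$; this is a finite arithmetic verification obtained by substituting into the explicit polynomial displayed before \Cref{thm:chow cod 2 first}. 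Granting this, \Cref{thm:chow cod 2 first} yields $\ch(\cM^{\PGL}_3(2,3))\simeq\QQ[\gamma_1]/(\gamma_1^2)$ with $\gamma_1$ of degree one.

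Next I would transport this to $M_4$. The coarse moduli space of $\cM^{\PGL}_3(2,3)$ is the open subscheme $M_4\setminus H_4$, and since $\dim H_4=2\cdot4-1=7$ while $\dim M_4=3\cdot4-3=9$, the hyperelliptic locus $H_4$ has codimension two. Rational Chow rings are unchanged under passing between a smooth Deligne--Mumford stack and its coarse space, so $\ch(M_4\setminus H_4)\simeq\QQ[\gamma_1]/(\gamma_1^2)$. Deleting the codimension-two locus $H_4$ does not affect the rational Picard group, and $\Pic(M_4)\otimes\QQ=\QQ\lambda_1$; hence $\gamma_1$ is a nonzero rational multiple of $\lambda_1$, and after rescaling $\ch(M_4\setminus H_4)\simeq\QQ[\lambda_1]/(\lambda_1^2)$.

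Finally I would feed this into the localization exact sequence
\[ \ch[j-2](H_4)\xrightarrow{\ i_*\ }\ch[j](M_4)\xrightarrow{\ j^*\ }\ch[j](M_4\setminus H_4)\longrightarrow 0. \]
For $j=0,1$ this gives $\ch[0](M_4)=\QQ$ and $\ch[1](M_4)=\QQ\lambda_1$; for $j\geq 2$ the right-hand term vanishes, so $\ch[j](M_4)=i_*\bigl(\ch[j-2](H_4)\bigr)$, and by the projection formula every such class is a multiple of $[H_4]$. Two external inputs are then used, parallel to those in \Cref{prop:chow M5}: the rational Chow ring of the hyperelliptic stack, which is trivial (in particular $\Pic(H_4)\otimes\QQ=0$, by Arsie--Vistoli), and the non-vanishing $\lambda_1^2\neq 0$ in $\ch(M_4)$ (Faber). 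The first forces $\ch[j](M_4)=0$ for $j\geq 3$ and $\ch[2](M_4)=\QQ\cdot[H_4]$; the second then identifies $\ch[2](M_4)=\QQ\lambda_1^2$. Altogether $\ch(M_4)\simeq\QQ[\lambda_1]/(\lambda_1^3)$. The main obstacle is this last step: exactly as the $\cM_5$ computation relied on the Patel--Vakil presentation of $\ch(H_{3,5})$ and on Faber's $\lambda_1^3\neq 0$, here one must bring in the known rational Chow ring of the hyperelliptic locus together with the non-triviality of $\lambda_1^2$ on $M_4$, and must check that $i_*$ contributes precisely the single class $\lambda_1^2$ and nothing in higher degree; the numerical check on (\ref{eq:det imp}) in the first step is routine but also required.
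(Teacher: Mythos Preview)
Your proposal is correct and follows essentially the same route as the paper: apply \Cref{thm:chow cod 2 first} to $\cM^{\PGL}_3(2,3)\simeq M_4\setminus H_4$, identify $\gamma_1$ with a multiple of $\lambda_1$ via the Picard group, then use the localization sequence together with the triviality of $\ch(H_4)$ and Faber's $\lambda_1^2\neq 0$. Two small remarks: the explicit expression (\ref{eq:det imp}) is derived under the convention $e_1<e_2$, so you should substitute $e_1=1$, $e_2=2$ rather than the other way around; and the paper additionally invokes that $[H_4]$ is a multiple of a power of $\lambda_1$ as an external fact, whereas your argument recovers this directly from $\dim_{\QQ}\ch[2](M_4)\leq 1$ together with $\lambda_1^2\neq 0$, which is slightly cleaner.
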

\begin{proof}
Let $H_4$ be the moduli space of hyperelliptic curves of genus four, regarded as a subvariety of $M_4$. We observed in \Cref{rmk:examples} that $M_4\smallsetminus H_4$ is isomorphic to the coarse moduli space of $\cM^{\PGL}_3(2,3)$, hence it follows from \Cref{thm:chow cod 2 first} that 
\[ \ch(M_4\smallsetminus H_4)\simeq \mathbb{Q}[\gamma_1]/(\gamma_1^2). \]
In particular, as ${\rm CH}^1(M_4)\simeq {\rm CH}^1(M_4\smallsetminus H_4)$, the Hodge class $\lambda_1$ must be a multiple of $\gamma_1$.

The Chow ring of $H_4$ is trivial, the latter being an open subvariety of $\AA^{2g-1}$, and the fundamental class of $H_4$ is equal to a multiple of $\lambda_1^3$. These two facts, combined with the localization exact sequence
\[ {\rm CH}^{*-2}(H_4) \longrightarrow \ch(M_4) \longrightarrow \ch(M_4\smallsetminus H_4) \longrightarrow 0\]
tell us that the Chow ring of $M_4$ is isomorphic to $\mathbb{Q}[\lambda_1]/(\lambda_1^i)$ where $i$ is either $2$ or $3$. As we know from \cite{Fab-conj}*{Theorem 2} that $\lambda_1^2$ is not zero, we get the claimed result.
\end{proof}

The second application concerns the coarse moduli space $K_6$ of polarized K3 surfaces of degree six. Here we adopt the same notation of \Cref{prop:chow U8}, where we denoted the Noether-Lefschetz divisors by $D_{d,h}$.
\begin{corollary}\label{cor:chow U6}
Let $U_6\subset K_6$ be the open subvariety parametrizing polarized K3 surfaces of degree six whose polarization is very ample. Then
\[ \ch(U_6)\simeq \mathbb{Q}[\lambda_1]/(\lambda_1^2), \]
where $\lambda_1$ is the Hodge line bundle and the pushforward morphism
\[ {\rm CH}^{i-1}(\cup_{d=1}^{3} D_{d,1})\longrightarrow{\rm CH}^i(K_8) \]
is surjective for $i>1$.
\end{corollary}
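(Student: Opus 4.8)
The plan is to follow the template of \Cref{cor:chow M4}, using the degree six realization of complete intersections. By \Cref{rmk:examples} the open subvariety $U_6 \subset K_6$ is the coarse moduli space of $\cM^{\PGL}_4(2,3)$, a Deligne--Mumford stack (polarized K3 surfaces have finite automorphism groups), so $\ch(U_6) \simeq \ch(\cM^{\PGL}_4(2,3))$. Thus the first step is to apply \Cref{thm:chow cod 2 first} with $n = 4$ and $d_1 = 2 < d_2 = 3$, i.e. $e_1 = 1$, $e_2 = 2$; this requires checking that the quantity~(\ref{eq:det imp}) does not vanish for these values. Concretely, one substitutes $(e_1,e_2,n) = (1,2,4)$ into the explicit formulas established above for $A_{1,0}, A_{0,1}$, for $B_{2,0}, B_{1,1}, B_{0,2}$, for $C_{2,0}, C_{1,1}, C_{0,2}$, and for $B_{0,0} = C(1,0)_{0,0}$, $C_{0,0} = C(1,1)_{0,0}$, and verifies that the $2\times 2$ matrix in~(\ref{eq:fund matrix 1}) is invertible --- a finite rational arithmetic check, conveniently performed with the same computer algebra used elsewhere in the paper. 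Granting this, \Cref{thm:chow cod 2 first} yields $\ch(\cM^{\PGL}_4(2,3)) \simeq \QQ[\gamma_1]/(\gamma_1^2)$, hence $\ch(U_6) \simeq \QQ[\gamma_1]/(\gamma_1^2)$; in particular ${\rm CH}^1(U_6) \simeq \QQ$.

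It remains to identify the generator of ${\rm CH}^1(U_6)$ with the Hodge class $\lambda_1$. Since ${\rm CH}^1(U_6) \simeq \QQ$, the class $\lambda_1$ is automatically a rational multiple of $\gamma_1$, and the only point to settle is that this multiple is nonzero. By adjunction, a smooth complete intersection $X = \{f_1 = f_2 = 0\}$ of type $(2,3)$ in $\PP(E)$ over a base $S$ satisfies $\omega_{X/S} \simeq \cO_X(d_1 + d_2 - n - 1) \otimes \pi^*(M_1 \otimes M_2 \otimes \det(E)^{\pm 1})$, where $M_1, M_2$ are the line bundles on $S$ carrying the defining equations, with $c_1(M_i) = \pm\gamma_i$. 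Since $d_1 + d_2 = 5 = n+1$ the $\cO_X$-twist is trivial, so $\pi_*\omega_{X/S}$ is pulled back from $S$, and $\lambda_1$ is the $\ZZ$-combination $a_0 c_1 + a_1\gamma_1 + a_2\gamma_2$ with $a_1 = a_2 = \pm 1$; on $\cM^{\PGL}_4(2,3)$, where $c_1 = c_1(E) = 0$, this reduces to $\lambda_1 = \pm(\gamma_1 + \gamma_2)$. Feeding in the degree one relation $A_{1,0}\gamma_1 + A_{0,1}\gamma_2 = 0$, i.e. $\gamma_2 = -(A_{1,0}/A_{0,1})\gamma_1$, gives $\lambda_1 = \pm(1 - A_{1,0}/A_{0,1})\gamma_1$, and evaluating $A_{1,0}, A_{0,1}$ at $(e_1,e_2,n) = (1,2,4)$ shows $A_{1,0} \neq A_{0,1}$, so $\lambda_1$ is a nonzero multiple of $\gamma_1$. (Alternatively one may simply invoke that the Hodge bundle is non-torsion on the moduli of polarized K3 surfaces of degree six.) Hence ${\rm CH}^1(U_6) = \QQ\cdot\lambda_1$ and $\ch(U_6) \simeq \QQ[\lambda_1]/(\lambda_1^2)$.

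For the surjectivity statement, note that $K_6 \smallsetminus U_6 = \bigcup_{d=1}^{3} D_{d,1}$ is a union of divisors, so the localization sequence gives, for every $i$, an exact sequence
\[ {\rm CH}^{i-1}\Bigl(\bigcup\nolimits_{d=1}^{3} D_{d,1}\Bigr) \longrightarrow {\rm CH}^i(K_6) \longrightarrow {\rm CH}^i(U_6) \longrightarrow 0. \]
Since $\ch(U_6) \simeq \QQ[\lambda_1]/(\lambda_1^2)$ is concentrated in degrees $0$ and $1$, we have ${\rm CH}^i(U_6) = 0$ for $i > 1$, whence the pushforward on the left is surjective for $i > 1$ (the ``$K_8$'' in the displayed corollary statement being a typo for $K_6$). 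The main obstacle is the non-vanishing check for~(\ref{eq:det imp}) at $(e_1,e_2,n) = (1,2,4)$: it is conceptually routine, but the closed form of~(\ref{eq:det imp}) is unwieldy, so in practice one evaluates the already-simplified $A$, $B$, $C$ coefficients at these values and checks invertibility of the $2\times 2$ matrix directly; a minor secondary point is the sign bookkeeping in the adjunction computation, of which only the non-vanishing of the final coefficient is actually used.
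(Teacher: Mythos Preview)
Your proof is correct and follows essentially the same route as the paper: identify $U_6$ with the coarse space of $\cM^{\PGL}_4(2,3)$ via \Cref{rmk:examples}, apply \Cref{thm:chow cod 2 first} at $(n,d_1,d_2)=(4,2,3)$, and deduce the surjectivity statement from the localization sequence and the vanishing of ${\rm CH}^i(U_6)$ for $i>1$.

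The one substantive difference is how you show $\lambda_1$ is a nonzero multiple of $\gamma_1$. The paper simply cites \cite{DL-k3}*{Proposition 4.2.6}. Your adjunction argument is a valid self-contained alternative: since $d_1+d_2=n+1$, the relative dualizing sheaf is pulled back from the base, and $\lambda_1$ is an integer combination of $c_1,\gamma_1,\gamma_2$. The only soft spot is the identification $c_1(M_i)=\pm\gamma_i$, which requires tracing the $\gamma_i$ through the chain of isomorphisms in \Cref{lemma:diagram} back to the tautological line bundles on the tower of Grassmannians; this is routine but you have not written it out. Your numerical check $A_{1,0}=78\neq 98=A_{0,1}$ then finishes the job, and your fallback (non-triviality of the Hodge class on $K_6$) is also legitimate. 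Either way the conclusion stands; the paper's citation just outsources this bookkeeping.
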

\begin{proof}
As observed in \Cref{rmk:examples}, the coarse space of $\cM^{\PGL}_4(2,3)$ is isomorphic to $U_6$. We can then apply \Cref{thm:chow cod 2 first} with $d_1=2$, $d_2=3$ and $n=4$. The fact that $\gamma_1$ is a non-zero multiple of $\lambda_1$ follows from \cite{DL-k3}*{Proposition 4.2.6}, and the claim on the pushforward morphism follows from $U_6$ being the complement of the union of those Noether-Lefschetz divisors in $K_6$.
\end{proof}

\appendix
\section{Quotient bundles and Grassmannians}\label{sec:quot}
Let $V$ be a vector space of dimension $n$ and let $W\subset V$ be a vector subspace of dimension $m$. For $1\leq r\leq (n-m)$, consider the two Grassmannians $\Gr(r,V)$ and $\Gr(r,V/W)$. Let $U\subset\Gr(r,V)$ be the open subscheme whose points $[E]$ corresponds to $r$-planes $E\subset V$ such that $E\cap W=\{0\}$ (observe that the numerical condition on $r$ implies that $U$ is not empty). There exists a well defined map
\[ q: U\longrightarrow \Gr(r,V/W),\quad [E]\longmapsto [\overline{E}] \]
where $[\overline{E}]$ is the image of $E$ in the quotient vector space $V/W$ (the rank of $[\overline{E}]$ is still $r$ because $E\cap W=\{0\}$).
\begin{proposition}\label{prop:affine}
The map $U\rightarrow \Gr(r,V/W)$ defined above is an affine bundle. 
\end{proposition}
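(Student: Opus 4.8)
The plan is to exhibit $q\colon U \to \Gr(r, V/W)$ as a torsor under a vector bundle, which is the standard way to show a morphism is an affine bundle. First I would identify the fiber of $q$ over a point $[\overline E] \in \Gr(r, V/W)$ set-theoretically: it consists of all $r$-planes $E \subset V$ with $E \cap W = \{0\}$ mapping isomorphically onto $\overline E$. Such an $E$ is the graph of a linear map $\overline E \to W$ (once we fix the identification $E \xrightarrow{\sim} \overline E$ coming from $q$, which uses $E \cap W = 0$); more precisely, if $\pi\colon V \to V/W$ is the projection and $E_0$ is one such plane, then any other is $\{v + \phi(\pi(v)) : v \in E_0\}$ for a unique $\phi \in \hom(\overline E, W)$. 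So the fiber is a torsor under $\hom(\overline E, W)$.

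The next step is to globalize this. Over $\Gr(r, V/W)$ we have the tautological subbundle $\cT \subset V/W \otimes \cO$ of rank $r$; the natural candidate for the structure group bundle is $\curshom(\cT, W \otimes \cO) = \cT^\vee \otimes W \otimes \cO$, a vector bundle of rank $rm$. I would construct the action: given a local section of $U \to \Gr(r,V/W)$ presenting an $r$-plane $E$ together with the isomorphism $E \xrightarrow{\sim} \overline E = \cT$, and given $\phi \in \hom(\cT, W)$, send $E$ to the plane $\{ e + \phi(\bar e) : e \in E\}$, where $\bar e$ is the image of $e$ in $V/W$. One checks this again has trivial intersection with $W$ (since the $V/W$-component is unchanged) and still maps isomorphically to the same $\overline E$, so it lies in the same fiber of $q$; and the action is free and transitive on fibers by the graph description above. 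This makes $q$ into a torsor under $\curshom(\cT, W\otimes\cO)$.

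Finally, any torsor under a vector bundle on a scheme is Zariski-locally trivial (the obstruction lies in $H^1$ with coefficients in a quasi-coherent, indeed locally free, sheaf, which vanishes Zariski-locally since such sheaves are acyclic on affines — equivalently, pick local trivializations of $\cT$ and lift), hence is an affine bundle in the sense used in the paper, i.e. Zariski-locally isomorphic to $\mathbb A^{rm} \times (\text{base})$. I expect the main obstacle to be purely bookkeeping: writing the action intrinsically and verifying it is well-defined and free/transitive on geometric fibers in families (over an arbitrary base scheme $S$, not just over a field), so that it really is a torsor and not just a fiberwise-transitive map. Once the torsor structure is in place, local triviality is immediate from the vanishing of coherent cohomology on affines.
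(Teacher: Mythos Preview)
Your proposal is correct and is essentially the paper's argument in torsor language: both rest on the observation that a lift $E$ of $\overline{E}$ with $E\cap W=0$ is the graph of a unique linear map $\overline{E}\to W$, so the fiber over $[\overline{E}]$ is an affine space modeled on $\hom(\overline{E},W)$. The paper carries this out in explicit matrix coordinates (fixing bases of a chosen lift $E$ and of $W$ and writing the bijection $\mathrm{Mat}_{m,r}\to q^{-1}([\overline{E}])$, $A\mapsto \langle e_i+\sum_j A_{ji}f_j\rangle$), whereas you globalize it as a torsor under $\cT^\vee\otimes W$ and invoke Zariski-local triviality of vector-bundle torsors; your packaging makes the local-triviality step cleaner, but the geometric content is identical.
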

\begin{proof}
For this, let us look at the fiber $q^{-1}([\overline{E}])$ over a point $[\overline{E}]$: this consists of all the $r$-planes in $V$ whose image in the quotient vector space coincides with the one of $E$. Let us fix a basis $\{e_1,\dots , e_{r} \}$ for $E$ and a basis $\{ f_1,\dots, f_m \}$ for $W$. There is a map
\begin{equation}\label{eq:affine bundle}
   \mathbb{A}^{mr}\simeq {\rm{Mat}}_{m,r} \longrightarrow q^{-1}([\overline{E}]) 
\end{equation}  
given by 
\begin{equation}\label{eq:matrix map}
    A\longmapsto \begin{pNiceArray}{c|c|c}
  &  &   \\
 e_1 & \hdots & e_r\\
  &  & 
\end{pNiceArray} + \begin{pNiceArray}{c|c|c}
  &  &   \\
 f_1 & \hdots & f_m \\
  &  & 
\end{pNiceArray} A
\end{equation}
where the matrix in the right hand side should be interpreted as the linear subspace spanned by the column vectors. Observe that the condition $E\cap W=\{0\}$ implies that the image of (\ref{eq:affine bundle}) is indeed in $U$.

We claim that (\ref{eq:affine bundle}) is an isomorphism. To prove that it is surjective, observe that given a point $[E']$ in the fiber and a basis $e'_1,\dots, e'_r$ for the associated subspace $E'$, then there must exists an $r\times r$-matrix $C$ and a matrix $A$ such that
\[ \begin{pNiceArray}{c|c|c}
  &  &   \\
 e'_1 & \hdots & e'_r\\
  &  & 
\end{pNiceArray}=\begin{pNiceArray}{c|c|c}
  &  &   \\
 e_1 & \hdots & e_r\\
  &  & 
\end{pNiceArray} C + \begin{pNiceArray}{c|c|c}
  &  &   \\
 f_1 & \hdots & f_m \\
  &  & 
\end{pNiceArray} A. \]
If we multiply on the right by $C^{-1}$, we get
\[ 
\begin{pNiceArray}{c|c|c}
  &  &   \\
 e''_1 & \hdots & e''_r\\
  &  & 
\end{pNiceArray} = 
\begin{pNiceArray}{c|c|c}
  &  &   \\
 e'_1 & \hdots & e'_r\\
  &  & 
\end{pNiceArray}C^{-1}=\begin{pNiceArray}{c|c|c}
  &  &   \\
 e_1 & \hdots & e_r\\
  &  & 
\end{pNiceArray} + \begin{pNiceArray}{c|c|c}
  &  &   \\
 f_1 & \hdots & f_m \\
  &  & 
\end{pNiceArray} (AC^{-1}), \]
which means that $AC^{-1}\mapsto [E']$. This proves surjectivity.

Suppose now that there exist two different basis $e_1',\dots, e_r'$ and $e_1'',\dots, e_r''$ for the same subspace $E'$ of the form
\[
\begin{pNiceArray}{c|c|c}
  &  &   \\
 e'_1 & \hdots & e'_r\\
  &  & 
\end{pNiceArray}=
\begin{pNiceArray}{c|c|c}
  &  &   \\
 e_1 & \hdots & e_r\\
  &  & 
\end{pNiceArray} + \begin{pNiceArray}{c|c|c}
  &  &   \\
 f_1 & \hdots & f_m \\
  &  & 
\end{pNiceArray} A',
\]
\[
\begin{pNiceArray}{c|c|c}
  &  &   \\
 e''_1 & \hdots & e''_r\\
  &  & 
\end{pNiceArray}=
\begin{pNiceArray}{c|c|c}
  &  &   \\
 e_1 & \hdots & e_r\\
  &  & 
\end{pNiceArray} + \begin{pNiceArray}{c|c|c}
  &  &   \\
 f_1 & \hdots & f_m \\
  &  & 
\end{pNiceArray} A''.
\]
As both $e_1',\dots, e_r'$ and $e_1'',\dots, e_r''$ span the same vector subspace, there exists an invertible matrix $C$ of rank $r$ such that 
\[ \begin{pNiceArray}{c|c|c}
  &  &   \\
 e'_1 & \hdots & e'_r\\
  &  & 
\end{pNiceArray} =
\begin{pNiceArray}{c|c|c}
  &  &   \\
 e''_1 & \hdots & e''_r\\
  &  & 
\end{pNiceArray}C. \]
This readily implies that
\[
\begin{pNiceArray}{c|c|c}
  &  &   \\
 e_1 & \hdots & e_r\\
  &  & 
\end{pNiceArray}({\rm{Id}}-C)=\begin{pNiceArray}{c|c|c}
  &  &   \\
 f_1 & \hdots & f_m \\
  &  & 
\end{pNiceArray} (A''C-A').
\]
Observe that the left hand side belongs to $E$ whereas the right hand side belongs to $W$. As $E\cap W=\{0\}$, we deduce that $C={\rm{Id}}$, hence (\ref{eq:affine bundle}) is injective. This easily implies that $q^{-1}(\overline{E})\simeq\mathbb{A}^{rm}$ and that \[ q: U\longrightarrow \Gr(r,V/W),\quad [E]\longmapsto [\overline{E}] \] is an affine bundle.
\end{proof}
\begin{bibdiv}
	\begin{biblist}
	
   \bib{ACGH}{book}{
   author={Arbarello, E.},
   author={Cornalba, M.},
   author={Griffiths, P. A.},
   author={Harris, J.},
   title={Geometry of algebraic curves. Vol. I},
   series={Grundlehren der mathematischen Wissenschaften [Fundamental
   Principles of Mathematical Sciences]},
   volume={267},
   publisher={Springer-Verlag, New York},
   date={1985},
   }
  
  \bib{AI}{article}{
   author={Asgarli, Shamil},
   author={Inchiostro, Giovanni},
   title={The Picard group of the moduli of smooth complete intersections of
   two quadrics},
   journal={Trans. Amer. Math. Soc.},
   volume={372},
   date={2019},
   number={5},
}
   
   \bib{benoist-thesis}{thesis}{
	author={Benoist, Olivier},
	title={Espaces de modules d'intersections compl\'{e}tes lisses},
	type={Ph.D. Thesis},
	organization={Universit\`{e} Paris Diderot (Paris 7)},
	date={2012},
	}
	
\bib{Ben-deg}{article}{
   author={Benoist, Olivier},
   title={Degr\'{e}s d'homog\'{e}n\'{e}it\'{e} de l'ensemble des intersections compl\`etes
   singuli\`eres},
   language={French, with English and French summaries},
   journal={Ann. Inst. Fourier (Grenoble)},
   volume={62},
   date={2012},
   number={3},
}

\bib{Ben-sep}{article}{
   author={Benoist, Olivier},
   title={S\'{e}paration et propri\'{e}t\'{e} de Deligne-Mumford des champs de modules
   d'intersections compl\`etes lisses},
   language={French, with English and French summaries},
   journal={J. Lond. Math. Soc. (2)},
   volume={87},
   date={2013},
   number={1},
}

\bib{CL}{article}{			
			author={Canning, Samir},
			author={Larson, Hannah},
			title={The {C}how rings of the moduli spaces of curves of genus 7, 8 and 9},
			status={preprint},
			eprint={https://arxiv.org/abs/2104.05820},
			date={2021},
			}

\bib{DL-coh}{article}{
   author={Di Lorenzo, Andrea},
   title={Cohomological invariants of the stack of hyperelliptic curves of
   odd genus},
   journal={Transform. Groups},
   volume={26},
   date={2021},
   number={1},
}
   
\bib{DL-pic-curves}{article}{
   author={Di Lorenzo, Andrea},
   title={Picard group of moduli of curves of low genus in positive
   characteristic},
   journal={Manuscripta Math.},
   volume={165},
   date={2021},
   number={3-4},
}

\bib{DL-k3}{article}{			
			author={Di Lorenzo, Andrea},
			title={Integral Picard group of the stack of quasi-polarized K3 surfaces of low degree},
			status={preprint},
			eprint={https://arxiv.org/abs/1910.08758},
			date={2019},
			}
\bib{DLFV}{article}{
   author={Di Lorenzo, Andrea},
   author={Fulghesu, Damiano},
   author={Vistoli, Angelo},
   title={The integral Chow ring of the stack of smooth non-hyperelliptic
   curves of genus three},
   journal={Trans. Amer. Math. Soc.},
   volume={374},
   date={2021},
   number={8},
}	

\bib{EG}{article}{
   author={Edidin, Dan},
   author={Graham, William},
   title={Equivariant intersection theory},
   journal={Invent. Math.},
   volume={131},
   date={1998},
   number={3},
}

\bib{EG-loc}{article}{
   author={Edidin, Dan},
   author={Graham, William},
   title={Localization in equivariant intersection theory and the Bott
   residue formula},
   journal={Amer. J. Math.},
   volume={120},
   date={1998},
   number={3},
}

\bib{Fab-conj}{article}{
   author={Faber, Carel},
   title={A conjectural description of the tautological ring of the moduli
   space of curves},
   conference={
      title={Moduli of curves and abelian varieties},
   },
   book={
      series={Aspects Math., E33},
      publisher={Friedr. Vieweg, Braunschweig},
   },
   date={1999},
   pages={109--129},
   review={\MR{1722541}},
}

\bib{Fab-m3}{article}{
   author={Faber, Carel},
   title={Chow rings of moduli spaces of curves. I. The Chow ring of
   $\overline{\scr M}_3$},
   journal={Ann. of Math. (2)},
   volume={132},
   date={1990},
   number={2},
}

\bib{Fab-m4}{article}{
   author={Faber, Carel},
   title={Chow rings of moduli spaces of curves. II. Some results on the
   Chow ring of $\overline{\scr M}_4$},
   journal={Ann. of Math. (2)},
   volume={132},
   date={1990},
   number={3},
}

\bib{FVis}{article}{
   author={Fulghesu, Damiano},
   author={Vistoli, Angelo},
   title={The Chow ring of the stack of smooth plane cubics},
   journal={Michigan Math. J.},
   volume={67},
   date={2018},
   number={1},
}

\bib{FV}{article}{
   author={Fulghesu, Damiano},
   author={Viviani, Filippo},
   title={The Chow ring of the stack of cyclic covers of the projective
   line},
   language={English, with English and French summaries},
   journal={Ann. Inst. Fourier (Grenoble)},
   volume={61},
   date={2011},
   number={6},
}

\bib{Iza}{article}{
   author={Izadi, E.},
   title={The Chow ring of the moduli space of curves of genus $5$},
   conference={
      title={The moduli space of curves},
      address={Texel Island},
      date={1994},
   },
   book={
      series={Progr. Math.},
      volume={129},
      publisher={Birkh\"{a}user Boston, Boston, MA},
   },
   date={1995},
}

\bib{PV}{article}{
author={Patel, A.},
author={Vakil, R.},
title={On the Chow ring of the Hurwitz space of degree three covers of $\PP^1$},
status={preprint},
date={2015},
eprint={https://arxiv.org/abs/1505.04323},
}

\bib{PeV}{article}{
   author={Penev, Nikola},
   author={Vakil, Ravi},
   title={The Chow ring of the moduli space of curves of genus six},
   journal={Algebr. Geom.},
   volume={2},
   date={2015},
   number={1},
}
		
	\end{biblist}
\end{bibdiv}

\end{document}